\newtheorem{teo}{Theorem}[section]
\newtheorem{teoi}{Theorem}[section]
\newtheorem{fact}[teo]{Fact}
\newtheorem{problem}[teo]{Problem}
\newtheorem{Question}[teo]{Question}
\newtheorem{prop}[teo]{Proposition}
\newtheorem{Lemma}[teo]{Lemma}
\newtheorem{Cor}[teo]{Corollary}
\newtheorem{claim}{Claim}[teo]
\newtheorem{remark}[teo]{Remark}
\newtheorem{Notation}[teo]{Notation}
\newtheorem{defin}[teo]{Definition}
\newcommand{\mc}[1]{\mathcal{#1}}
\newcommand{\norm}[1]{\left\Vert#1\right\Vert}
\newcommand{\abs}[1]{\left\vert#1\right\vert}
\newcommand{\set}[1]{\left\{\,#1\,\right\}}
\newcommand{\defined}[1]{\emph{#1}}
 \DeclareMathOperator{\spann}{span}
\newcommand{\er}{\mathbb{R}}
\newcommand{\en}{\mathbb{N}}
\newcommand{\zet}{\mathbb{Z}} 
\newcommand{\qu}{\mathbb{Q}}
\newcommand{\ce}{\mathbb{C}}
\date{\today}
\author{Christopher J. Eagle \and Alessandro Vignati}
\thanks{The first author was partially supported by an Ontario Graduate Scholarship award.}
\title[Saturation and elementary equivalence of C*-algebras]{Saturation and elementary equivalence of C*-algebras}
\address[Christopher J. Eagle]{University of Toronto, Department of Mathematics, 40 St. George St., Toronto, Ontario, Canada M5S 2E4}
\email{cjeagle@math.toronto.edu}%
\address[Alessandro Vignati]{York University, Department of Mathematics and Statistics, 4700 Keele St., Toronto, Ontario, Canada M3J 1P3}
\email{ale.vignati@gmail.com}
\subjclass[2000]{03C65, 03C90, 03C50, 46L05, 46L10, 54C35}%
\keywords{countable degree-1 saturation, Breuer ideal, generalized Calkin algebra, commutative C*-algebra, continuous logic, elementary equivalence}%
\begin{document}
\maketitle
\begin{abstract}
We study the saturation properties of several classes of C*-algebras.  Saturation has been shown by Farah and Hart to unify the proofs of several properties of coronas of $\sigma$-unital C*-algebras; we extend their results by showing that some coronas of non-$\sigma$-unital C*-algebras are countably degree-$1$ saturated.  We then relate saturation of the abelian C*-algebra $C(X)$, where $X$ is $0$-dimensional, to topological properties of $X$, particularly the saturation of $CL(X)$.   We also characterize elementary equivalence of the algebras $C(X)$ in terms of $CL(X)$ when $X$ is $0$-dimensional, and show that elementary equivalence of the generalized Calkin algebras of densities $\aleph_\alpha$ and $\aleph_\beta$ implies elementary equivalence of the ordinals $\alpha$ and $\beta$.
\end{abstract}

\section{Introduction}
In this paper we examine extent to which several classes of operator algebras are saturated in the sense of model theory.  In fact, few operator algebras are saturated in the full model-theoretic sense, but in this setting there are useful weakenings of saturation that are enjoyed by a variety of algebras.  The main results of this paper show that certain classes of C*-algebras do have some degree of saturation, and as a consequence, have a variety of properties previously considered in the operator algebra literature. For all the definitions involving continuous model theory for metric structures (or in particular of C*-algebras), we refer to \cite{BenYaacov2008a} or \cite{FarahHartSherman}. Different degrees of saturation and relevant concepts will be defined in Section \ref{section:Background}.

Among the weakest possible kinds of saturation an operator algebra may have, which nevertheless has interesting consequences, is being \textit{countably degree-$1$ saturated}.  This property was introduced by Farah and Hart in \cite{farah2011countable}, where it was shown to imply a number of important consequences (see Theorem \ref{thm:ConsequencesOfSat} below).  It was also shown in \cite{farah2011countable} that countable degree-$1$ saturation is enjoyed by a number of familiar algebras, such as coronas of $\sigma$-unital C*-algebras and all non-trivial ultraproducts and ultrapowers of C*-algebras.  Further examples were found by Voiculescu \cite{Voiculescu}.  Countable degree-$1$ saturation can thus serve to unify proofs about these algebras.  We extend the results of Farah and Hart by showing that a class of algebras which is broader than the class of $\sigma$-unital ones have countably degree-$1$ saturated coronas.  The following theorem is Theorem \ref{thm:FactorCtbleSat} below; for the definitions of $\sigma$-unital C*-algebras and essential ideals, see Definition \ref{def:EssIdeal}.

\begin{teoi}\label{TheoremA}
Let $M$ be a unital C*-algebra, and let $A \subseteq M$ be an essential ideal.  Suppose that there is an increasing sequence of positive elements in $A$ whose supremum is $1_M$, and suppose that any increasing uniformly bounded sequence converges in $M$.  Then $M/A$ is countably degree-$1$ saturated.
\end{teoi}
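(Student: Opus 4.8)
The plan is to adapt the template of Farah and Hart for coronas of $\sigma$-unital algebras, using the order-completeness of $M$ in place of the strictly complete topology of a multiplier algebra. Write $\pi\colon M\to M/A$ for the quotient map. After the usual reductions, a consistent countable degree-$1$ type over a separable parameter set can be presented as a sequence of conditions $\norm{P_k(\bar x)}=r_k$ ($k\in\en$), where each $P_k$ is a $*$-polynomial of degree $1$ in the tuple of variables $\bar x$ (ranging over the unit ball) whose coefficients lie in $M/A$; I lift every coefficient to $M$. Consistency means that for each $n$ there is a tuple in the unit ball of $M/A$ satisfying the first $n$ conditions to within $2^{-n}$, and I fix once and for all lifts $\bar b^{(n)}$ of such tuples, in the unit ball of $M$. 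The goal is to produce a single tuple $\bar x$ in $M$ with $\pi(\bar x)$ realizing the type exactly.

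First I would manufacture a good resolution of the identity inside $A$. Since $(e_n)$ increases with supremum $1_M$, the sequence converges $\sigma$-weakly in the enveloping von Neumann algebra $M^{**}$, and applying a Hahn--Banach/Arveson convexity argument to the countable set $S$ consisting of the lifted coefficients together with all entries of the $\bar b^{(n)}$, I would pass to convex combinations of the $e_n$ to obtain an increasing sequence $(\tilde e_n)$ in $A$ that is quasicentral relative to $S$, i.e.\ $\norm{[\tilde e_n,s]}\to 0$ for each $s\in S$. Some care is needed to keep $\sup_n\tilde e_n=1_M$, which I would arrange by weighting the combinations so that $\tilde e_n\ge(1-2^{-n})e_{m_n}$ along a subsequence $m_n\to\infty$. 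Relabelling $(\tilde e_n)$ as $(e_n)$ and setting $\Delta_n=(e_{n+1}-e_n)^{1/2}\in A$, the telescoping partial sums $\sum_{n\le N}\Delta_n^2=e_{N+1}-e_0$ increase to $1_M-e_0$, so that $\sum_n\Delta_n^2\equiv 1_M\pmod A$, using $e_0\in A$.

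Next I would define $\bar x$ by $x_j=\sum_n\Delta_n b^{(n)}_j\Delta_n$. To make sense of this series I would split each contraction $b^{(n)}_j$ into a linear combination of four positive contractions; for each positive family the partial sums $\sum_{n\le N}\Delta_n(\,\cdot\,)\Delta_n$ are dominated by $\sum_{n\le N}\Delta_n^2\le 1_M$ and are increasing in $N$, hence converge in $M$ by hypothesis, and recombining defines $x_j\in M$. This is exactly where the completeness assumption substitutes for completeness of the strict topology in the $\sigma$-unital case. Quasicentrality then lets me slide coefficients past the $\Delta_n$ up to errors that are summable and lie in $A$, so that $P_k(\bar x)\equiv\sum_n\Delta_n P_k(\bar b^{(n)})\Delta_n\pmod A$, the constant term being handled via $\sum_n\Delta_n^2\equiv 1_M$. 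Since every finite initial segment of this diagonal sum lies in $A$, a tail estimate exploiting the almost-orthogonality of the $\Delta_n$ should give $\norm{\pi(P_k(\bar x))}=\limsup_n\norm{P_k(\bar b^{(n)})}$, which equals $r_k$ because the blocks were chosen with $\norm{\pi(P_k(\bar b^{(n)}))}\to r_k$.

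The main obstacle is the interface between the two completeness mechanisms in the non-$\sigma$-unital setting. Concretely, I must produce the quasicentral sequence so that it simultaneously remains inside $A$, keeps supremum $1_M$, and kills the commutators $[\,\cdot\,,s]$ for $s\in S$, none of which is automatic when $(e_n)$ is merely an increasing sequence with supremum $1_M$ rather than a genuine approximate identity. The second, and I expect harder, point is turning the tail/almost-orthogonality estimate into an honest two-sided equality of the quotient norm with the limsup of the block norms, both $\le$ and $\ge$, without the strict convergence and ready-made orthogonal increments available for multiplier algebras; controlling the quotient norm of $\sum_n\Delta_n P_k(\bar b^{(n)})\Delta_n$ is the crux of the argument.
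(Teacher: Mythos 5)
Your overall architecture is the same as the paper's (lift the type, build an almost-central sequence $(f_n)$ in $A$ with $\sum f_n^2=1$, glue the partial solutions as $\sum_n f_n b^{(n)} f_n$, use monotone completeness for convergence of the series), but the step where you manufacture the quasicentral sequence has a genuine gap, and it is exactly the point where the paper does something different. The Arveson convexity trick rests on Mazur's theorem, so it needs the commutators $[e_m,s]$ to tend to $0$ in the \emph{weak Banach-space topology} $\sigma(M,M^*)$; this holds when $(e_m)$ is an approximate identity for $A$, because then its weak*-limit in $M^{**}$ is the central open projection supporting $A$. But an increasing sequence with supremum $1_M$ is in general \emph{not} an approximate identity for $A$ --- that is precisely the point of dropping $\sigma$-unitality: for the Breuer ideal, $A$ has no countable approximate identity at all, and since every convex combination of the $e_m$ is dominated by a single $e_N$, no sequence of such combinations can be one either. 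What you actually know is only that $e_m\to 1$ against \emph{normal} functionals; a singular functional need not see this, so the weak*-limit $q=\sup_{M^{**}}e_m$ computed in $M^{**}$ is in general neither $1_{M^{**}}$ nor central, and the commutators of convex combinations of tails converge weak* to $[q,s]$. By weak lower semicontinuity of the norm, if $[q,s]\neq 0$ no convex combination of tails has small commutator with $s$, so your Hahn--Banach step cannot be run. The paper (Lemma \ref{thefns}) instead applies Arveson's theorem to the \emph{full canonical approximate identity} $\Lambda$ of $A$ (all positive contractions, a net --- no $\sigma$-unitality needed) to get a quasicentral subnet $\Lambda'$, and then chooses $e_n\in\Lambda'$ dominating both the quasicentrality requirements \emph{and} $g_n$; the given sequence $(g_n)$ is used only as a sequence of ``floors'' forcing $\sup_n e_n=1_M$, hence $\sum f_n^2=1$.

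There is a second gap in the norm computation. Your claimed identity $\norm{\pi(P_k(\bar x))}=\limsup_n\norm{P_k(\bar b^{(n)})}$, with norms taken in $M$, is false as stated: a lift can have $M$-norm strictly and uniformly larger than the quotient norm of its image, so the right-hand side can exceed $r_k$. The paper avoids this by compressing the blocks, setting $a_j=(1-e_{j-2})Q_k(\bar x_j)(1-e_{j-2})$, where condition \ref{cond0} of Lemma \ref{thefns} guarantees $\norm{a_j}\approx\norm{\pi(Q_k(\bar x_j))}$ and condition \ref{cond2a} guarantees $f_ja_jf_j\approx f_jQ_k(\bar x_j)f_j$; the hard inequality $\limsup_j\norm{a_jf_j^2}\leq\norm{\pi\left(\sum_j a_jf_j^2\right)}$ is then proved by an explicit almost-orthogonal-vectors argument in the faithful representation furnished by essentiality of $A$ (condition \ref{cond8}). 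This two-sided control of the quotient norm is precisely where the original Farah--Hart argument contained the error this paper repairs, so the ``tail estimate exploiting almost-orthogonality'' you defer is not a routine verification but the crux, and as sketched your version would not close.
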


Theorem \ref{TheoremA} is proved as Theorem \ref{thm:FactorCtbleSat} below.  One interesting class of examples of a non-$\sigma$-unital algebra to which our result applies is the following.  Let $N$ be a II$_1$ factor, $H$ a separable Hilbert space and $\mc{K}$ be the unique two-sided closed ideal of the von Neumann tensor product $N\,\overline{\otimes}\,\mc{B}(H)$ (see \cite{Breu168} and \cite{Breu269}).  Then $(N\,\overline{\otimes} \,\mc{B}(H)) / \mc{K}$ is countably degree-$1$ saturated.  These results are the contents of Section \ref{section:Calkin}.

In Section \ref{section:Calkin2} we consider generalized Calkin algebras of uncountable weight, as well as $\mc{B}(H)$ where $H$ has uncountable density.  Considering their complete theories as metric structures, we obtain the following (Theorem \ref{teo:generalizedcalkin} below):

\begin{teoi}\label{thmB}
Let $\alpha \neq \beta$ be ordinals, $H_\alpha$ the Hilbert space of density $\aleph_\alpha$. Let $\mathcal B_\alpha=\mathcal B(H_\alpha)$ and $\mathcal C_\alpha=\mathcal B_\alpha/\mathcal K$ the Calkin algebra of density $\aleph_\alpha$. Then the projections of the algebras $\mathcal C_\alpha$ and $\mathcal C_\beta$ as posets with respect to the Murray-von Neumann order are elementary equivalent if and only if $\alpha=\beta\mod\omega^\omega$,  where $\omega^\omega$ is computed by ordinal exponentiation, as they are the infinite projections of $\mathcal B_\alpha$ and $\mathcal B_\beta$. Consequently, if $\alpha\not\equiv\beta$ then $\mathcal B_\alpha\not\equiv \mathcal B_\beta$ and $\mathcal C_\alpha\not\equiv\mathcal C_\beta$.
\end{teoi}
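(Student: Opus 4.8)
The plan is to reduce the statement about the C*-algebras to the classical model theory of well-orderings, in three stages: identify the projection poset concretely, quote the classification of ordinals up to elementary equivalence, and then transfer the resulting non-equivalence from the poset back to the algebra. For the first stage I would show that the Murray--von Neumann (MvN) classes of projections of $\mathcal{C}_\alpha$ form a chain order-isomorphic to the ordinal $\alpha+1$. Since projections lift through $\mathcal{K}=\mathcal{K}(H_\alpha)$ (a self-adjoint lift of a projection has spectrum clustering at $\{0,1\}$, and continuous functional calculus produces a genuine projection lift), every projection of $\mathcal{C}_\alpha$ is the image of some $p\in\mathcal{B}(H_\alpha)$, and its image is nonzero exactly when $\dim\ran(p)$ is infinite, as a compact projection has finite rank. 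For infinite-dimensional $p,q$ I would verify that $\bar p\preceq\bar q$ in $\mathcal{C}_\alpha$ iff $\dim\ran(p)\le\dim\ran(q)$: the forward direction lifts a Calkin partial isometry to a genuine $w$ with $\overline{w^*w}=\bar p$ and $\overline{ww^*}\le\bar q$, and $\dim\ran(w^*w)=\dim\ran(ww^*)$ together with the fact that finite-rank perturbations do not change an infinite dimension forces $\dim\ran(p)\le\dim\ran(q)$; the reverse direction already holds inside $\mathcal{B}(H_\alpha)$. Since the infinite cardinals $\aleph_0<\aleph_1<\dots<\aleph_\alpha$ are totally ordered, the nonzero MvN-classes form a chain of order type $\alpha+1$, which is exactly the chain of MvN-classes of infinite projections of $\mathcal{B}_\alpha$. (Whether or not one adjoins the bottom class of the zero projection is immaterial below.)

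For the second stage, the posets in the statement are well-orderings of type $\alpha+1$ and $\beta+1$, so their elementary equivalence is governed by the classical theorem of Mostowski and Tarski (reprovable by an Ehrenfeucht--Fra\"{\i}ss\'e argument on ordinals): $(\gamma,<)\equiv(\delta,<)$ iff $\gamma$ and $\delta$ leave the same remainder on division by $\omega^\omega$, with the convention separating the case where both are $<\omega^\omega$. Since $\omega^\omega$ is additively indecomposable, passing from $\gamma$ to $\gamma+1$ only increments the remainder, so $\alpha+1\equiv\beta+1$ iff $\alpha\equiv\beta\pmod{\omega^\omega}$. This yields the stated biconditional for the projection posets.

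For the final consequence I would prove the contrapositive: continuous elementary equivalence $\mathcal{C}_\alpha\equiv\mathcal{C}_\beta$ (resp.\ $\mathcal{B}_\alpha\equiv\mathcal{B}_\beta$) implies classical elementary equivalence of the projection posets. The mechanism is a uniform interpretation of the poset inside the algebra: ``$x$ is a projection'' is the zero-set of a formula, the MvN preorder is defined by $\inf_v\max(\|v^*v-p\|,\|(1-q)vv^*\|)$ over the unit ball and MvN equivalence similarly, and each classical sentence in the language of order is translated to a continuous sentence by sending $\forall,\exists$ to $\sup,\inf$ over projections and reading the order relation through these formulas. I expect the main obstacle to be showing that continuous elementary equivalence of the algebras actually forces the translated sentences to agree: continuous logic is real-valued, so one must produce a uniform positive gap letting an approximate projection, or an approximate partial isometry witnessing the order, be replaced by a genuine one. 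For projections this is Loring-type weak stability; for the order it is the quantitative fact that if $\dim\ran(p)>\dim\ran(q)$ then $\inf_v\max(\|v^*v-p\|,\|(1-q)vv^*\|)\ge\tfrac12$, since $\max<\tfrac12$ would make $\xi\mapsto qv\xi$ a bounded-below injection of $\ran(p)$ into the strictly lower-dimensional $\ran(q)$, a contradiction. Saturation of the algebras (as in Theorem~\ref{TheoremA} for the corona case) may additionally be invoked to guarantee that the relevant infima are attained. Granting this gap throughout the quantifier alternations, a sentence distinguishing the posets produced in the first two stages converts into a continuous sentence distinguishing the algebras, completing the argument.
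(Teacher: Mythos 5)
Your proposal is correct and follows essentially the same route as the paper's proof: interpret the ordinal (the chain of infinite densities, realized as Murray--von Neumann classes of projections of $\mathcal C_\alpha$, equivalently of infinite projections of $\mathcal B_\alpha$) inside the algebra, quote the classical mod-$\omega^\omega$ classification of ordinals up to elementary equivalence, and transfer continuous elementary equivalence to the discrete posets via definable predicates admitting a uniform $0$--$1$ gap, which is exactly what the paper does with its formulas $\phi$ and $\psi$. The only quibble is quantitative: your specific gap constant $\tfrac12$ for the subequivalence formula is not justified by the estimate you sketch (the error terms from $\|v^*v-p\|$ and $\|(1-q)vv^*\|$ can swamp the lower bound on $\|v\xi\|$), but any sufficiently small uniform gap works and suffices for the interpretation, so this does not affect the argument.
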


Elementary equivalence of C*-algebras $A$ and $B$ can be understood, via the Keisler-Shelah theorem for metric structures, as saying that $A$ and $B$ have isomorphic ultrapowers (see Theorem \ref{thm:KeislerShelah} below for a more precise description).

For our second group of results we consider (unital) abelian C*-algebras, which are all of the form $C(X)$ for some compact Hausdorff space $X$.  We focus in particular on the real rank zero case, which corresponds to $X$ being $0$-dimensional.  In Section \ref{section:Topology} we first establish a correspondence between the Boolean algebra of the clopen set of $X$ and the theory of $C(X)$ (see Theorem \ref{boolean0dim}).
\begin{teoi}\label{thmC}
Let $X$ and $Y$ be compact $0$-dimensional Hausdorff spaces.  Then $C(X)$ and $C(Y)$ are elementarily equivalent if and only if the Boolean algebras $CL(X)$ and $CL(Y)$ are elementarily equivalent.
\end{teoi}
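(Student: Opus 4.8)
The plan is to reduce everything to the Keisler--Shelah theorem, in both its metric form (Theorem \ref{thm:KeislerShelah}) and its classical form for Boolean algebras, and to bridge the C*-algebraic and Boolean worlds through a single lemma about ultrapowers. Recall that two structures are elementarily equivalent precisely when they admit isomorphic ultrapowers over a common index set: one direction is automatic from {\L}o\'s's theorem, while the converse is the substantial content of Keisler--Shelah. Fix an ultrafilter $\mathcal{U}$ on an index set $I$. The central observation I would establish is the following: \emph{the projections of the metric ultrapower $C(X)^{\mathcal{U}}$, equipped with $p\wedge q = pq$, $p\vee q = p+q-pq$, and $\neg p = 1-p$, form a Boolean algebra canonically isomorphic to the classical ultrapower $CL(X)^{\mathcal{U}}$ of the Boolean algebra $CL(X)$.}

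To prove this lemma I would use two facts about $C(X)$ for $X$ zero-dimensional. First, its projections are exactly the characteristic functions $\chi_U$ of clopen sets $U$, so $\operatorname{Proj}(C(X)) \cong CL(X)$, and any two distinct projections are at distance exactly $1$, since $\chi_U - \chi_V$ takes value $\pm 1$ wherever $U$ and $V$ disagree. Second, projections lift along the ultrapower: given a projection $[(a_n)] \in C(X)^{\mathcal{U}}$ I may take each $a_n$ self-adjoint, and since $\lim_{\mathcal{U}}\|a_n^2 - a_n\| = 0$ the spectrum of $a_n$ avoids a fixed neighborhood of $1/2$ for $\mathcal{U}$-almost all $n$, so applying a continuous function equal to $0$ near $0$ and $1$ near $1$ produces genuine projections $p_n = \chi_{U_n}$ with $[(p_n)] = [(a_n)]$. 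Because distinct projections are at distance $1$, the identification $\lim_{\mathcal{U}}\|\chi_{U_n} - \chi_{V_n}\| = 0$ holds if and only if $\{n : U_n = V_n\} \in \mathcal{U}$, which is exactly the identification defining $CL(X)^{\mathcal{U}}$, and the lattice operations match the Boolean ones under $[(U_n)] \mapsto [(\chi_{U_n})]$. This yields the desired isomorphism.

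Next I would prove a structural lemma: \emph{$C(X)^{\mathcal{U}}$ is the closed linear span of its projections, and hence $C(X)^{\mathcal{U}} \cong C(\operatorname{St}(CL(X)^{\mathcal{U}}))$.} For density, fix a norm-one element $[(f_n)]$ and $\varepsilon > 0$, and use that on the zero-dimensional compact space $X$ the locally constant functions are dense in $C(X)$. Approximating each $f_n$ within $\varepsilon$ by a locally constant function and then rounding its finitely many values to the \emph{fixed} finite grid $\varepsilon\zet[i] \cap \{|z|\le 2\}$, I obtain $h_n = \sum_{c} c\,\chi_{F^{(n)}_c}$ with clopen level sets and coefficients $c$ ranging over one fixed finite set, and $\|f_n - h_n\| \le 2\varepsilon$ uniformly. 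Then $[(h_n)] = \sum_c c\,[(\chi_{F^{(n)}_c})]$ is a finite linear combination of projections within $2\varepsilon$ of $[(f_n)]$, proving density. A commutative unital C*-algebra whose projections span a dense subalgebra is the algebra of continuous functions on the Stone space of its Boolean algebra of projections; with the key lemma this gives $C(X)^{\mathcal{U}} \cong C(\operatorname{St}(CL(X)^{\mathcal{U}}))$.

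With these lemmas the theorem follows. If $CL(X) \equiv CL(Y)$, classical Keisler--Shelah gives $\mathcal{U}$ with $CL(X)^{\mathcal{U}} \cong CL(Y)^{\mathcal{U}}$; passing to Stone spaces and applying the structural lemma gives $C(X)^{\mathcal{U}} \cong C(Y)^{\mathcal{U}}$, whence $C(X) \equiv C(Y)$ since isomorphic ultrapowers are elementarily equivalent. Conversely, if $C(X) \equiv C(Y)$, the metric Keisler--Shelah theorem (Theorem \ref{thm:KeislerShelah}) gives $\mathcal{U}$ with $C(X)^{\mathcal{U}} \cong C(Y)^{\mathcal{U}}$; any C*-isomorphism preserves projections and their lattice operations, so by the key lemma it restricts to a Boolean isomorphism $CL(X)^{\mathcal{U}} \cong CL(Y)^{\mathcal{U}}$, giving $CL(X) \equiv CL(Y)$. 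I expect the main obstacle to be the uniformity in the structural lemma --- ensuring that the simple-function approximations across the index set use a single finite set of coefficients, so that their ultrapower class is a genuine finite combination of projections rather than an element requiring infinitely many; the rounding to a fixed grid is what resolves this, and the distance-exactly-$1$ property of projections is what makes the key lemma's identification with the discrete ultrapower exact.
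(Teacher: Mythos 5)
Your proof is correct, and its top-level skeleton---Keisler--Shelah in both logics, a bridge between the metric and discrete ultrapowers over a single ultrafilter, then \L o\'s's theorem in both directions---is exactly the skeleton of the paper's proof of Theorem \ref{boolean0dim}. Where you genuinely differ is in how the bridge is built. The paper cites the Gurevic--Bankston ultracopower results (Lemma \ref{lem:Bankston}): for any compact Hausdorff $X$ and ultrafilter $\mathcal{U}$ one has $C(X)^{\mathcal{U}} \cong C(\sum_{\mathcal{U}}X)$ and $CL(X)^{\mathcal{U}} \cong CL(\sum_{\mathcal{U}}X)$, and the forward direction then passes through Gelfand--Naimark (isomorphic algebras give homeomorphic ultracopowers) and back through the same lemma. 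You instead prove the bridge from scratch in the zero-dimensional case, via the projection lattice: weak stability of the projection relation lifts projections of the metric ultrapower to sequences of characteristic functions of clopen sets; the distance-exactly-$1$ property makes the map $CL(X)^{\mathcal{U}} \to \operatorname{Proj}\bigl(C(X)^{\mathcal{U}}\bigr)$ well-defined and injective; and your fixed-grid rounding of locally constant approximants shows the ultrapower is the closed span of its projections, hence is $C\bigl(\operatorname{St}(CL(X)^{\mathcal{U}})\bigr)$. Your route buys self-containedness and extra information---it shows concretely that the ultrapower of a real rank zero abelian algebra is again real rank zero, realized as the function algebra on a Stone space---and your forward direction (a unital $*$-isomorphism restricts to a Boolean isomorphism of projection lattices) is slightly more direct than the paper's detour through homeomorphisms of ultracopowers. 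What the paper's citation buys is brevity and a bridge valid for arbitrary compact Hausdorff spaces, not just zero-dimensional ones, which lets it reuse Lemma \ref{lem:Bankston} elsewhere (e.g.\ in Lemma \ref{lem:CHqfsat}). The uniformity point you flag---that the simple-function approximants must draw coefficients from one fixed finite set so that their ultrapower class is a finite combination of projections---is real, and your grid rounding handles it correctly.
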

We obtain several corollaries of the above theorem.  For example, we show that many familiar spaces have function spaces which are elementarily equivalent, and hence have isomorphic ultrapowers.

Finally, we study saturation properties in the abelian setting.  We find that if $C(X)$ is countably degree-$1$ saturated then $X$ is a sub-Stonean space without the countable chain condition and which is not Rickart.  In the $0$-dimensional setting we describe the relation between the saturation of $C(X)$ and the saturation of $CL(X)$.  While some implications hold in general, a complete characterization occurs in the case where $X$ has no isolated points.  The following is a special case of Theorems \ref{theorem1} and \ref{theorem2} in Section \ref{section:Topology}.
\begin{teoi}\label{thmD}
Let $X$ be a compact $0$-dimensional Hausdorff space without isolated points.  Then the following are equivalent:
\begin{itemize}
\item{
$C(X)$ is countably degree-$1$ saturated,
}
\item{
$C(X)$ is countably saturated,
}
\item{
$CL(X)$ is countably saturated.
}
\end{itemize}
\end{teoi}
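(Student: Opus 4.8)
The plan is to prove the cycle of implications ($C(X)$ countably saturated) $\Rightarrow$ ($C(X)$ countably degree-$1$ saturated) $\Rightarrow$ ($CL(X)$ countably saturated) $\Rightarrow$ ($C(X)$ countably saturated). The first implication is immediate, since degree-$1$ types form a subclass of all types, so a countably saturated structure is \emph{a fortiori} countably degree-$1$ saturated. The hypothesis that $X$ has no isolated points will be used through the following observation, which I would record first: a nonempty minimal clopen set would be a clopen, hence compact, connected (having no proper clopen subset) $0$-dimensional space, and therefore a single point; since $X$ has no isolated points, $CL(X)$ has no atoms. Thus $CL(X)$ is an atomless Boolean algebra, whose first-order theory is complete and admits quantifier elimination; this lets me reduce every $CL(X)$-type over countable parameters to a family of containment conditions relative to the finite subalgebra generated by the parameters.

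For (degree-$1$) $\Rightarrow$ ($CL(X)$ saturated) I would route through topology rather than attempt to realize Boolean types by idempotent solutions of degree-$1$ systems directly. Using the consequences of countable degree-$1$ saturation, I would first deduce that $X$ is a $0$-dimensional sub-Stonean space, i.e. that disjoint open $F_\sigma$ subsets of $X$ have disjoint clopen neighbourhoods. Translated into $CL(X)$, this is exactly a countable interpolation property: given clopen sets with $A_0 \subseteq A_1 \subseteq \cdots$ and $\cdots \subseteq C_1 \subseteq C_0$ and $A_n \subseteq C_m$ for all $n,m$, there is a clopen $B$ with $A_n \subseteq B \subseteq C_n$ for every $n$. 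Combined with atomlessness, which supplies the freedom needed to also meet the ``strict'' (nondegeneracy) conditions of a complete type, this interpolation property is precisely what it means, after quantifier elimination, for the atomless Boolean algebra $CL(X)$ to be countably saturated.

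For ($CL(X)$ saturated) $\Rightarrow$ ($C(X)$ saturated) I would upgrade Boolean saturation to full continuous saturation using that $X$ is $0$-dimensional, so that the locally constant functions, that is, the finite linear combinations of characteristic functions of clopen sets, are norm-dense in $C(X)$. Given a countable continuous type $p(\bar x)$ over parameters from $C(X)$, I would approximate each parameter uniformly by simple functions coded by $CL(X)$, replace each condition $\|P_k(\bar x,\bar c)\| \le r_k$ by its simple-function surrogate up to $2^{-k}$, and thereby reduce, at each finite precision, the problem of approximately realizing $p$ to realizing a Boolean type in $CL(X)$, solvable by hypothesis. A back-and-forth argument across the precisions, using completeness of $C(X)$ and the fact that the surrogates converge, then produces a genuine realization of $p$ by continuous functions; atomlessness again guarantees that the reduced Boolean types stay consistent whenever $p$ is.

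I expect the main obstacle to be the middle implication, specifically the passage from the weak, ``rounded'' information that countable degree-$1$ saturation provides to an \emph{exact} clopen (projection) interpolant: a degree-$1$ system can force a positive contraction that is $1$ on $\bigcup_n A_n$ and $0$ off $\bigcap_n C_n$, but such an element need not be $\{0,1\}$-valued, and in a general $0$-dimensional space no clopen set need sit between the open set $\bigcup_n A_n$ and the closed set $\bigcap_n C_n$. The crux is therefore to extract the sub-Stonean consequence from degree-$1$ saturation in a form strong enough to yield the clopen interpolant; the $0$-dimensionality (real rank zero) and the absence of isolated points are exactly what make this extraction, and the subsequent identification with Boolean saturation, go through.
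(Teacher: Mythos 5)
Your cycle of implications and the topological extraction step (separating disjoint closed sets in a compact $0$-dimensional space by a clopen set) do mirror the paper, but the middle implication as you set it up is broken. You replace the hypothesis of countable degree-$1$ saturation by its consequence that $X$ is sub-Stonean, and then claim that sub-Stoneanness plus atomlessness yields countable saturation of $CL(X)$. That last step is false. Sub-Stoneanness gives exactly the non-strict interpolation property you state ($A_n \subseteq B \subseteq C_n$), but the criterion for countable saturation of atomless Boolean algebras (Theorem~\ref{teo:satbooleanalgebra}) demands \emph{strict} interpolation $Y < c < Z$, and atomlessness cannot restore strictness when the chain $(A_n)$ has a least upper bound. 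Concretely, let $B$ be a complete atomless Boolean algebra (e.g.\ the Lebesgue measure algebra) and $X = S(B)$: then $X$ is compact, $0$-dimensional, without isolated points, and extremally disconnected, hence sub-Stonean, so your argument would conclude that $B = CL(X)$ is countably saturated. It is not: if $(A_n)$ is strictly increasing with supremum $C$, the type $\set{A_n < x < C : n \in \en}$ is finitely satisfiable (by the $A_{n+1}$) but cannot be realized, since $x \geq A_n$ for all $n$ forces $x \geq C$. What excludes this configuration is not sub-Stoneanness but degree-$1$ saturation itself: the paper's Proposition~\ref{totaldisc} uses it once to realize the type $p_n < r < q_n$ by a positive function $r$ (and then extracts the clopen exactly as you propose, from the disjoint closed sets $r^{-1}(0)$ and $r^{-1}(1)$), and once more, via the remark following Lemma~\ref{abcondition1}, to exclude the case where $\bigcup_n U_n = \bigcap_n V_n$ is clopen --- i.e.\ where a bounded increasing non-convergent sequence of projections has a least upper bound, a Rickart-type configuration compatible with sub-Stoneanness but not with degree-$1$ saturation. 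Since your argument never invokes more than sub-Stoneanness, it proves too much and cannot be repaired without returning to the saturation hypothesis.

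The final implication also has a genuine gap: your reduction handles only quantifier-free conditions $\norm{P_k(\overline x, \overline c)} \leq r_k$, whereas a countable type in continuous logic may contain $\sup$ and $\inf$ over $C(X)$, and nothing in your sketch converts quantification over continuous functions into quantification over clopen sets. This is not a technicality: the paper can only deduce quantifier-free saturation of $C(X)$ from countable saturation of $CL(X)$ (Theorem~\ref{thm:CHqfsat}, proved first under the Continuum Hypothesis and then with CH removed by forcing absoluteness, Lemma~\ref{removingCH}), and the upgrade to full countable saturation requires a separate input, quantifier elimination: $C(X) \equiv C(\beta\en\setminus\en)$ by Corollary~\ref{cor:elementaryEquivalence}, together with the Farah--Hart result (Theorem~\ref{thm:quantifierElimination}) that this theory eliminates quantifiers --- this is the second place where the absence of isolated points is essential. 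Moreover, even for the quantifier-free part, your ``back-and-forth across the precisions'' is not an argument: realizing each finite-precision surrogate produces solutions with no reason to converge, and the ability to realize every finite approximation of a type is precisely condition (2) of Definition~\ref{cd1s}, the hypothesis rather than the conclusion. The paper instead codes each function by a single countable family of clopen sets (Proposition~\ref{coding}) and realizes one Boolean type in a countably saturated subalgebra of $CL(X)$, pulling an exact solution back through a norm-preserving embedding.
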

Before beginning the technical portion of the paper, we wish to give further illustrations of the importance of the saturation properties we will be considering, particularly the full model-theoretic notion of saturation (see Definition \ref{cd1s} below).  For countable degree-$1$ saturation we refer to Theorem \ref{thm:ConsequencesOfSat} for a list of consequences.  The following fact follows directly from the fact that axiomatizable properties are preserved to ultrapowers, which are countably saturated (see \cite[Theorem 5.4 and Proposition 7.6]{BenYaacov2008a}).
\begin{fact}
Let $P$ be a property that may or may not be satisfied by a C*-algebra.  Suppose that countable saturation implies the negation of $P$. Then $P$ is not axiomatizable (in the sense of \cite{BenYaacov2008a}).
\end{fact}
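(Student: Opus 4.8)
The plan is to argue by contradiction, driving the whole argument with the two facts cited just before the statement. Suppose toward a contradiction that $P$ is axiomatizable. We may assume that $P$ is satisfied by some C*-algebra $A$, since an unsatisfiable property axiomatizes no class of C*-algebras and the assertion is vacuous in that case. First I would fix a nonprincipal ultrafilter $\mc{U}$ on $\en$ and pass to the ultrapower $A^{\mc{U}}$.

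By the fundamental theorem of ultraproducts (\cite[Theorem 5.4]{BenYaacov2008a}), every sentence takes the same value in $A$ as in $A^{\mc{U}}$; equivalently, the diagonal embedding realizes $A^{\mc{U}}$ as an elementary extension of $A$, so in particular $A\equiv A^{\mc{U}}$. Since $P$ is axiomatizable, whether a C*-algebra satisfies $P$ depends only on its complete theory, and hence $P$ is preserved under elementary equivalence. As $A\models P$ and $A\equiv A^{\mc{U}}$, this gives $A^{\mc{U}}\models P$.

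On the other hand, $A^{\mc{U}}$ is countably saturated by \cite[Proposition 7.6]{BenYaacov2008a}. The standing hypothesis that countable saturation implies $\neg P$ therefore forces $A^{\mc{U}}\models\neg P$. This contradicts the conclusion of the previous paragraph, so $P$ cannot be axiomatizable.

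There is essentially no technical obstacle here: the content is entirely carried by the two cited results, and the only point needing care is their interface, namely that one and the same ultrapower $A^{\mc{U}}$ is simultaneously elementarily equivalent to $A$ (so that axiomatizability transports $P$ upward) and countably saturated (so that $\neg P$ is forced). This is exactly the tension that makes the two hypotheses incompatible, and it is what turns the stated \emph{Fact} into a convenient tool for ruling out axiomatizability.
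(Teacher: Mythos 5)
Your proof is correct and takes exactly the route the paper intends: the paper justifies this Fact in a single line by citing the same two results of \cite{BenYaacov2008a} (sentences are preserved to ultrapowers, and ultrapowers over countably incomplete ultrafilters are countably saturated), and your write-up simply makes the resulting contradiction explicit on a single ultrapower $A^{\mc{U}}$. The only point worth noting is your dismissal of the unsatisfiable case, which is a matter of convention about whether the empty class counts as axiomatizable, not a mathematical gap.
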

Other interesting consequences follow when the Continuum Hypothesis is also assumed. In this case, all ultrapowers of a separable algebra by a non-principal ultrafilter on $\en$ are isomorphic.  In fact, all that is needed is that the ultrapowers are countably saturated and elementarily equivalent:
\begin{fact}[{~\cite[Proposition 4.13]{FarahHartSherman}}]
Assume the Continuum Hypothesis.  Let $A$ and $B$ be two elementary equivalent countably saturated C*-algebra of density $\aleph_1$. Then $A\cong B$.
\end{fact}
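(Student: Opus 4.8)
The plan is to run a transfinite back-and-forth argument of length $\omega_1$, using countable saturation to realize types at each successor step. First I would invoke the Continuum Hypothesis to pin down cardinalities. Since $A$ has density character $\aleph_1$ it has a dense subset of size $\aleph_1$, and as every element is a limit of a sequence from this set we get $|A| \leq \aleph_1^{\aleph_0} = 2^{\aleph_0} = \aleph_1$; combined with density $\aleph_1$ this forces $|A| = \aleph_1$, and likewise $|B| = \aleph_1$. This is the one place where the Continuum Hypothesis enters, and it is what lets me enumerate all of $A = \{a_\xi : \xi < \omega_1\}$ and all of $B = \{b_\xi : \xi < \omega_1\}$ rather than merely dense subsets.

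Next I would construct, by recursion on $\xi < \omega_1$, an increasing chain of partial elementary maps $f_\xi$ from $A$ to $B$ with $|\dom f_\xi| \leq |\xi| + \aleph_0$, arranging that $a_\xi \in \dom f_{\xi+1}$ and $b_\xi \in \ran f_{\xi+1}$. At a limit stage I set $f_\lambda = \bigcup_{\xi < \lambda} f_\xi$: an increasing union of partial elementary maps is again partial elementary, because any finite tuple from the union already lies in some $f_\xi$, and since $\lambda < \omega_1$ the domain stays countable. At a successor stage I perform one \emph{forth} and one \emph{back} step. For the forth step, let $p$ be the type of $a_\xi$ over the countable set $\dom f_\xi$ and let $q = f_\xi(p)$ be its push-forward, a type over the countable set $\ran f_\xi$. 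Because $f_\xi$ is elementary --- and at the very first step because $A \equiv B$ supplies the base case --- the family $q$ is approximately finitely satisfiable in $B$, so by countable saturation of $B$ it is realized by some $b \in B$, and I extend $f_\xi$ by $a_\xi \mapsto b$. The back step is symmetric, using countable saturation of $A$ to produce a preimage of $b_\xi$.

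The resulting map $f = \bigcup_{\xi < \omega_1} f_\xi$ is a bijection from $A$ onto $B$, since every $a_\xi$ enters the domain by stage $\xi + 1$ and every $b_\xi$ enters the range by stage $\xi + 1$. Being a union of elementary maps, $f$ preserves the value of every formula, in particular the values of the distinguished function symbols and of the metric, so it is an isometric $*$-isomorphism and $A \cong B$. I expect the crux to be the realization step: one must check that countable saturation, as defined for these algebras, indeed applies to the push-forward type $q$, which is a type over a countable parameter set. This is exactly the content of countable saturation once one observes that at every stage $\xi < \omega_1$ the sets $\dom f_\xi$ and $\ran f_\xi$ are genuinely countable, so that the relevant families of conditions are countable --- a fact secured by the regularity of $\aleph_1$ at limit stages and by the cardinality computation above.
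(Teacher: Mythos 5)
The paper does not actually prove this Fact---it quotes it from Farah--Hart--Sherman---but your transfinite back-and-forth of length $\omega_1$ is exactly the standard argument behind the cited Proposition 4.13, and it is correct: countable saturation realizes the pushed-forward type over each countable domain, and the base case is supplied by $A\equiv B$. The only (harmless) variation is that you invoke the Continuum Hypothesis a second time to enumerate all of $A$ and $B$ (via $\aleph_1^{\aleph_0}=\aleph_1$) rather than running the back-and-forth over dense subsets of size $\aleph_1$ and extending the resulting elementary map by continuity to the completions, as is more customary in the metric setting.
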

Applying Parovicenko's Theorem (see \cite{Parovicenko}), the above fact immediately yields that under the Continuum Hypothesis if $X$ and $Y$ are locally compact Polish $0$-dimensional spaces then $C(\beta X \setminus X) \cong C(\beta Y \setminus Y)$.

Saturation also has consequences for the structure of automorphism groups:
\begin{fact}[{~\cite[Theorem 3.1]{farah2014rigidity}}] \label{teo:CHauto}
Assume the Continuum Hypothesis.  Let $A$ be a countably saturated C*-algebra of density $\aleph_1$. Then $A$ has $2^{\aleph_1}$-many automorphisms.  In particular, $A$ has outer automorphisms.
\end{fact}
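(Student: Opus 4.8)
The plan is to exploit the homogeneity of saturated structures and to build a binary tree of partial automorphisms of height $\omega_1$, so that the $2^{\aleph_1}$ branches of the tree yield $2^{\aleph_1}$ pairwise distinct automorphisms of $A$.

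First I would record the model-theoretic preliminaries. Under the Continuum Hypothesis a dense subset of $A$ has size $\aleph_1$, and since a $*$-homomorphism is determined by its values on a dense set we get $\abs{A}=\aleph_1$; in particular there are at most $\aleph_1^{\aleph_1}=2^{\aleph_1}$ automorphisms, so $2^{\aleph_1}$ is the largest number we could hope for. Moreover, for a structure of density $\aleph_1$ the notions of being countably ($=\aleph_1$-) saturated and being saturated coincide, and saturated structures are strongly homogeneous: every elementary map between separable subsets of $A$ extends to an automorphism of $A$ (see \cite{BenYaacov2008a}). Fix once and for all an enumeration $\langle a_\alpha : \alpha<\omega_1\rangle$ of a dense subset of $A$.

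Next comes the construction. For each node $s$ of the tree $2^{<\omega_1}$ of binary sequences of countable length I would build a partial elementary map $\sigma_s$ with countable domain $D_s$ and range $R_s$, subject to: (i) $s\sqsubseteq t$ implies $\sigma_s\subseteq\sigma_t$, and at limit levels $\sigma_s=\bigcup_{\beta}\sigma_{s\restriction\beta}$; (ii) a back-and-forth bookkeeping ensures $a_\alpha\in D_s\cap R_s$ once the length of $s$ exceeds $\alpha$, using countable saturation at each step to realize the relevant type over the countable set constructed so far; and (iii) at every successor the two immediate extensions $\sigma_{s{}^\frown 0}$ and $\sigma_{s{}^\frown 1}$ disagree at a common point of their domains. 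For a branch $f\in 2^{\omega_1}$ the union $\phi_f=\bigcup_{\alpha<\omega_1}\sigma_{f\restriction\alpha}$ is then a surjective elementary isometry defined on a dense subalgebra of $A$, hence extends uniquely to an automorphism; and if $f\neq g$, then at the node where they split $\phi_f$ and $\phi_g$ send a common element to two distinct values, so $\phi_f\neq\phi_g$. Since the tree has $2^{\aleph_1}$ branches, this produces $2^{\aleph_1}$ distinct automorphisms.

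The only nontrivial point — and the main obstacle — is arranging the branching in (iii): at a node $s$ with countable $D_s$ and $R_s$ I must find an element $c\in A$ and two distinct $b_0,b_1\in A$ both realizing the pushforward of $\op{tp}(c/D_s)$ over $R_s$. This is exactly the assertion that some element of $A$ has a non-algebraic type over $R_s$, i.e. lies outside $\op{dcl}(R_s)$. Here I would use that $A$ is non-separable while, by downward Löwenheim–Skolem, $\op{dcl}(D_s)$ is separable; hence there is a $c\notin\op{dcl}(D_s)$, its type over $D_s$ has at least two realizations, and transporting these along $\sigma_s$ (which extends to an automorphism by homogeneity) yields the required $b_0\neq b_1$ over $R_s$. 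With the branching secured, the remainder is a routine transfinite back-and-forth.

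Finally, for the ``in particular'' clause I would observe that every inner automorphism of $A$ is conjugation by a unitary of $A$ (or of its multiplier algebra), and the set of such unitaries has cardinality at most $\abs{A}=\aleph_1$; thus $A$ has at most $\aleph_1$ inner automorphisms, and since $\aleph_1<2^{\aleph_1}$ the family constructed above necessarily contains outer automorphisms.
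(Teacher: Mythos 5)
This Fact is not proved in the paper at all: it is imported as a citation from \cite[Theorem 3.1]{farah2014rigidity}, so there is no internal proof to compare against; your proposal must stand on its own, and in outline it does. It is essentially the standard (and, as far as the cited source goes, the actual) argument: under CH, countable saturation plus density $\aleph_1$ makes $A$ saturated, hence strongly homogeneous, and a binary tree $2^{<\omega_1}$ of countable partial elementary maps with back-and-forth bookkeeping and genuine branching at successors yields $2^{\aleph_1}$ pairwise distinct automorphisms. Your treatment of the branching step is also correct: $\operatorname{dcl}(D_s)$ lies inside a separable elementary submodel by downward L\"owenheim--Skolem, so some $c\notin\operatorname{dcl}(D_s)$ exists, and in a countably saturated model the type of such a $c$ over $D_s$ has a second realization (one terminological caution: this is ``outside the definable closure,'' not ``non-algebraic''; the two notions differ, but definable closure is what you actually use and it suffices).

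Two points should be tightened. First, countable saturation as defined realizes types consisting of \emph{countably many} conditions, whereas your back-and-forth needs to realize \emph{complete} types over countable parameter sets; this is fine because the language of C*-algebras is separable, so a complete type is determined by a uniformly dense countable subset of its conditions, but the reduction should be stated. Second, the counting of inner automorphisms in your last paragraph is wrong as written in the non-unital case: the multiplier algebra of an algebra of cardinality $\aleph_1$ can have $2^{\aleph_1}$ unitaries inducing pairwise distinct automorphisms (for instance, under CH, $\abs{\mc{K}(\ell^2(\omega_1))}=\aleph_1$ while $M(\mc{K}(\ell^2(\omega_1)))=\mc{B}(\ell^2(\omega_1))$ contains $2^{\aleph_1}$ diagonal $\pm1$ unitaries, no two of which differ by a scalar), so the bound ``at most $\abs{A}=\aleph_1$ inner automorphisms'' is justified only when inner means conjugation by a unitary of $A$ itself, i.e., for unital $A$. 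Since every algebra to which the paper applies this Fact (coronas, ultrapowers) is unital, this costs nothing in context, but the parenthetical ``or of its multiplier algebra'' should be deleted or replaced by an actual argument.
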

Is it known that for Fact \ref{teo:CHauto} the assumption of countable saturation can be weakened in some particular cases (see \cite[Theorem 1.4]{coskey2012automorphisms} and \cite[Theorem 2.13]{farah2011countable}), and the property of having many automorphisms under the Continuum Hypothesis is shared by many algebras that are not even quantifier free saturated (for example the Calkin algebra). In particular it is plausible that the assumption of countable saturation in Fact \ref{teo:CHauto} can be replaced with a lower degree of saturation.

In light of this, and since the consistency of the existence of non-trivial homeomorphisms (see \cite{farah2014rigidity} for the definition) of spaces of the form $\beta\er^n\setminus\er^n$ is still open (for $n\geq 2$), it makes sense to ask about the saturation of $C(\beta \er^n\setminus \er^n)$ - see Question \ref{betaern} below.   In the opposite direction, the Proper Forcing Axiom has been used to show the consistency of all automorphisms of certain algebras being inner.  For more on this topic, see \cite{farah2014rigidity}, \cite{farah2012homeomorphisms} and \cite{farah2011all}.

\section*{Acknowledgements}
We want to first thank Ilijas Farah for countless helpful remarks and suggestions, which led to significant improvements of the results of this paper.  We also received valuable comments from the participants in the Workshop on Model Theory and Operator Algebras held in M\"unster in July 2014.  The second author thanks the organizers of that conference for their invitation to participate and their financial support.  We further thank Ilijas Farah and Bradd Hart for allowing us to include their unpublished proof of Theorem \ref{thm:quantifierElimination}.

\section{Countable degree-$1$ saturation}\label{section:Background}
In this section we describe the notions we will be considering throughout the remainder of the paper.  We also take this opportunity to fix notation that will be used in subsequent sections.  The main topics of this paper are several weakenings of the model-theoretic notion of saturation.  We begin by reviewing the definition and basic properties.  Since finite-dimensional C*-algebras have full model-theoretic saturation, and hence have all of the weakenings in which we are interested, we assume throughout the paper that all C*-algebras under discussion are infinite dimensional unless otherwise specified.

\begin{Notation}
For a compact set $K\subseteq\er$ and $\epsilon>0$, we denote the $\epsilon$-thickening of $K$ by $(K)_\epsilon=\set{x\in\er \colon d(x,K)<\epsilon}$.
\end{Notation}

We will be considering C*-algebras as structures for the continuous logic formalism of \cite{BenYaacov2008a} (or, for the more specific case of operator algebras, \cite{FarahHartSherman}).  Nevertheless, for many of our results it is not necessary to be familiar with that logic.   Informally, a \emph{formula} is an expression obtained from a finite set of norms of $*$-polynomials with complex coefficients by applying continuous functions and taking suprema and infima over some of the variables.  A formula is \emph{quantifier-free} if it does not involve suprema or infima.  A formula is a \emph{sentence} if every variable appears in the scope of a supremum or infimum.  We refer the reader to \cite{FarahHartSherman} for the precise definitions.

Given a $C^*$-algebra $A$ we will denote as $A_{\leq1}$, $A_1$ and $A^+$ the closed unit ball of $A$, its boundary, and the cone of positive elements respectively.

\begin{defin}\label{cd1s}
Let $A$ be a C*-algebra, and let $\Phi$ be a collection of formulas in the language of C*-algebras.  We say that $A$ is \defined{countably $\Phi$-saturated} if for every sequence $(\phi_n)_{n \in \en}$ of formulas from $\Phi$ with parameters from $A_{\leq 1}$, and sequence $(K_n)_{n \in \en}$ of compact sets, the following are equivalent:
\begin{enumerate}
\item[(1)] There is a sequence $(b_k)_{k \in \mathbb{N}}$ of elements of $A_{\leq 1}$ such that $\phi_n^A(\overline{b}) \in K_n$ for all $n \in \en$,
\item[(2)] For every $\epsilon > 0$ and every finite $\Delta \subset \en$ there is $(b_k)_{k \in \mathbb{N}} \subseteq A_{\leq 1}$, depending on $\epsilon$ and $\Delta$, such that $\phi_n^A(\overline{b}) \in (K_n)_{\epsilon}$ for all $n \in \Delta$.
\end{enumerate}
The three most important special cases for us will be the following:
\begin{itemize}
\item{
If $\Phi$ contains all $1$-degree $^*$-polynomials, we say that $A$ is \defined{countably $1$-degree saturated}.
}
\item{
If $\Phi$ contains all quantifier free formulas, we say that $A$ is \defined{quantifier-free saturated}.}
\item{
If $\Phi$ is the set of all formulas we say that the algebra $A$ is \defined{countably saturated}.
}
\end{itemize}
\end{defin}

Clearly condition (1) in the definition always implies condition (2), but the converse does not always hold.  We recall the (standard) terminology for the various parts of the above definition.  A set of conditions satisfying (2) in the definition is called a \emph{type}; we say that the conditions are \emph{approximately finitely satisfiable} or \emph{consistent}.  When condition (1) holds, we say that the type is \emph{realized} (or \emph{satisfied}) by $(b_k)_{k \in \mathbb{N}}$.

An equivalent definition of quantifier-free saturation is obtained by allowing only $^*$-polynomials of degree at most $2$ \cite[Lemma 1.2]{farah2011countable}.  By (model-theoretic) compactness the concepts defined by Definition \ref{cd1s} are unchanged if each compact set $K_n$ is assumed to be a singleton.  

In the setting of logic for C*-algebras, the analogue of a finite discrete structure is a C*-algebra with compact unit ball, that is, a finite-dimensional algebra.  The following fact is then the C*-algebra analogue of a well-known result from discrete logic.

\begin{fact}[{~\cite[Proposition 7.8]{BenYaacov2008a}}]\label{fact:FDSat}
Every ultraproduct of C*-algebras over a countably incomplete ultrafilter is countably saturated.  In particular, every finite-dimensional C*-algebra is countably saturated.
\end{fact}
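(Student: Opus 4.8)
The plan is to establish the first assertion --- countable saturation of any ultraproduct $A=\prod_{\mathcal U}A_i$ over a countably incomplete ultrafilter $\mathcal U$ --- directly from {\L}o\'{s}'s theorem together with a diagonalization powered by countable incompleteness, and then to obtain the finite-dimensional statement as a one-line corollary. Throughout I use the fundamental theorem of ultraproducts for metric structures (\cite[Theorem 5.4]{BenYaacov2008a}), which gives $\phi^A(\bar a)=\lim_{\mathcal U}\phi^{A_i}(\bar a_i)$ for every formula $\phi$ and every tuple $\bar a=(\bar a_i)_{\mathcal U}$. By the remark following Definition \ref{cd1s} it suffices to realize a countable type in which each compact set $K_n$ is a singleton, so fix a consistent type $\{\phi_n(\bar x)=r_n:n\in\en\}$, with parameters from $A_{\leq 1}$ absorbed into the formulas and with the variables $\bar x$ quantified over the unit ball. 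The first step converts consistency into $\mathcal U$-largeness: for a finite $F\subseteq\en$ and $\epsilon>0$, consistency says that the value in $A$ of the sentence $\inf_{\bar x}\max_{n\in F}\abs{\phi_n(\bar x)-r_n}$ is less than $\epsilon$, so applying {\L}o\'{s}'s theorem to this sentence shows that the set $X_{F,\epsilon}$ of indices $i$ admitting a tuple $\bar a$ from $(A_i)_{\leq 1}$ with $\abs{\phi_n^{A_i}(\bar a)-r_n}<\epsilon$ for all $n\in F$ belongs to $\mathcal U$.

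The second step exploits countable incompleteness. Fix a decreasing sequence $I=J_0\supseteq J_1\supseteq\cdots$ of members of $\mathcal U$ with $\bigcap_m J_m=\emptyset$, and refine it by replacing $J_m$ with $J_m\cap X_{\{1,\dots,m\},1/m}$, still a decreasing sequence in $\mathcal U$ with empty intersection. Because the intersection is empty, every index $i$ lies in only finitely many of these sets; let $m(i)$ denote the largest $m$ with $i\in J_m$, when such an $m$ exists. For those $i$ choose a tuple $\bar a_i$ from $(A_i)_{\leq 1}$ witnessing $\abs{\phi_n^{A_i}(\bar a_i)-r_n}<1/m(i)$ for all $n\leq m(i)$, and choose $\bar a_i$ arbitrarily otherwise. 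Set $\bar c=(\bar a_i)_{\mathcal U}$.

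I claim $\bar c$ realizes the type. Fix $n\in\en$ and $\delta>0$, and pick $m\geq n$ with $1/m<\delta$. For every $i\in J_m$ we have $m(i)\geq m\geq n$, hence $\abs{\phi_n^{A_i}(\bar a_i)-r_n}<1/m(i)\leq\delta$; since $J_m\in\mathcal U$, {\L}o\'{s}'s theorem yields $\abs{\phi_n^A(\bar c)-r_n}\leq\delta$, and as $\delta$ was arbitrary $\phi_n^A(\bar c)=r_n$. This proves the first assertion. For the finite-dimensional statement, fix a finite-dimensional $A$ and any countably incomplete ultrafilter (for instance a non-principal ultrafilter on $\en$). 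Since the unit ball of $A$ is norm-compact, the ultralimit $\lim_{\mathcal U}a_i$ of any bounded family exists in $A$ and $(a_i)_{\mathcal U}$ is the diagonal image of this limit; hence the diagonal embedding $A\hookrightarrow A^{\mathcal U}$ is onto and $A\cong A^{\mathcal U}$. As $A^{\mathcal U}$ is countably saturated by the first part, so is $A$.

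The main obstacle is the bookkeeping in the diagonalization of the second step: the witnesses $\bar a_i$ must be arranged so that each single condition $\phi_n=r_n$ is eventually met to arbitrary precision on a $\mathcal U$-large set, even though only finite accuracy $1/m(i)$ is demanded at any fixed coordinate $i$. Countable incompleteness is exactly the hypothesis that forces the accuracy parameter $m(i)$ to tend to infinity along $\mathcal U$, and it is what fails for countably complete ultrafilters; everything else reduces to {\L}o\'{s}'s theorem and the elementary fact that an infimum over the unit ball is itself the value of a sentence.
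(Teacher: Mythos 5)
Your proposal is correct, and it is essentially the argument the paper relies on: the paper does not prove the first assertion itself but cites \cite[Proposition 7.8]{BenYaacov2008a}, whose standard proof is exactly your combination of {\L}o\'{s}'s theorem with a diagonalization along a decreasing sequence of $\mathcal{U}$-large sets witnessing countable incompleteness. Your treatment of the finite-dimensional case also coincides with the paper's own remark, namely that norm-compactness of the unit ball makes the diagonal embedding $A \hookrightarrow A^{\mathcal{U}}$ surjective, so $A \cong A^{\mathcal{U}}$ and saturation transfers.
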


The second part of the fact follows from the first because any ultrapower of a finite-dimensional C*-algebra is isomorphic to the original algebra (see \cite[p. 24]{BenYaacov2008a}).

A condition very similar to the countable saturation of ultraproducts was considered by Kirchberg and R\o rdam under the name ``$\epsilon$-test" in \cite[Lemma 3.1]{KirchbergRordam}.  Before returning to the analysis of the different degrees of saturation, we give definitions for two well-known concepts that we are going to use strongly, but that may not be familiar to a C*-algebraist.

\begin{defin}
The \emph{theory} of a C*-algebra $A$ is the set of all sentences in the language of C*-algebras which have value $0$ when evaluated in $A$.  We say that C*-algebras $A$ and $B$ are \defined{elementary equivalent}, written $A \equiv B$, if their theories are equal.
\end{defin}
Elementary equivalence can be defined without reference to continuous logic by way of the following result, which is known as the Keisler-Shelah theorem for metric structures.  The version we are using is stated in \cite[Theorem 5.7]{BenYaacov2008a}, and was originally proved in an equivalent setting in \cite[Theorem 10.7]{Henson2003}.
\begin{teo}\label{thm:KeislerShelah}
Let $A$ and $B$ be C*-algebras.  Then $A \equiv B$ if and only if there is an ultrafilter $\mc{U}$ (over a possibly uncountable set) such that the ultrapowers $A^{\mc{U}}$ and $B^{\mc{U}}$ are isomorphic.
\end{teo}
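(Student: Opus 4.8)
The plan is to prove the two directions separately, with essentially all of the content in the forward direction, following the strategy of the classical Keisler--Shelah theorem adapted to the continuous logic of metric structures. First the easy direction: suppose $A^{\mc{U}} \cong B^{\mc{U}}$ for some ultrafilter $\mc{U}$. I would invoke the continuous-logic form of \L o\'s's theorem, which asserts that for every sentence $\phi$ one has $\phi^{A^{\mc{U}}} = \lim_{\mc{U}} \phi^A = \phi^A$, the net being constant since each factor of the ultrapower is a copy of $A$. Hence $A \equiv A^{\mc{U}}$, and likewise $B \equiv B^{\mc{U}}$. As isomorphic structures have equal theories, $A^{\mc{U}} \cong B^{\mc{U}}$ gives $A^{\mc{U}} \equiv B^{\mc{U}}$, and chaining these equivalences yields $A \equiv B$.

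For the forward direction, assume $A \equiv B$. The same \L o\'s computation already shows $A^{\mc{U}} \equiv B^{\mc{U}}$ for \emph{every} ultrafilter $\mc{U}$, so the only remaining task is to choose $\mc{U}$ so that the two ultrapowers become isomorphic. The idea is to force both ultrapowers to be highly saturated of a common density character and then appeal to uniqueness of saturated models. Concretely, I would fix a cardinal $\kappa$ bounding the density characters of $A$ and $B$ together with the size of the language, and take a countably incomplete, $\kappa^+$-good ultrafilter $\mc{U}$ on an index set of cardinality $\kappa$; such ultrafilters exist in ZFC by Kunen's construction, so no appeal to the Continuum Hypothesis or GCH is needed. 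The metric analogue of Keisler's theorem on good ultrafilters then gives that $A^{\mc{U}}$ and $B^{\mc{U}}$ are $\kappa^+$-saturated.

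It then remains to run the back-and-forth argument: two elementarily equivalent metric structures that are $\kappa^+$-saturated and have density character $\kappa^+$ are isomorphic. The isomorphism is produced as the completion of an increasing union of partial elementary maps defined on dense subsets, extended alternately on each side by one new element, where at each stage one uses $\kappa^+$-saturation to realize over the current (smaller-than-$\kappa^+$) parameter set the type of the element being matched. Combined with the previous step, and with a check that $A^{\mc{U}}$ and $B^{\mc{U}}$ share a common density character, this yields $A^{\mc{U}} \cong B^{\mc{U}}$.

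The hard part will be establishing the metric good-ultrafilter saturation theorem that underlies the second paragraph: one must re-prove the combinatorial heart of Keisler's result in continuous logic. There a consistent family of conditions $\{\,\phi_\alpha \in K_\alpha\,\}$ over the ultrapower must be refined, via a multiplicative assignment of index-set subsets, to an actual realization of the type. The presence of the metric and the essential role of the $\epsilon$-approximations appearing in Definition \ref{cd1s} complicate the passage from a monotone to a multiplicative refinement, and extra care is needed to move from finite approximate satisfaction to an exact realization in the complete structure. A secondary, purely set-theoretic obstacle is matching the density characters of the two ultrapowers without GCH, since $\kappa^+$-saturation of an ultrapower of density $2^\kappa$ suffices for the back-and-forth only when $2^\kappa = \kappa^+$; in general this is repaired by observing that ultrapowers by regular good ultrafilters are \emph{special} and invoking the uniqueness of elementarily equivalent special models of equal cardinality in place of the naive saturated-model argument.
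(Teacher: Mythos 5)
Your backward direction (the continuous {\L}o\'s theorem plus invariance of theories under isomorphism) is correct and standard. But note first that the paper does not prove this theorem at all: it is quoted from the literature, stated as \cite[Theorem 5.7]{BenYaacov2008a} and originally proved in an equivalent setting by Henson and Iovino \cite[Theorem 10.7]{Henson2003}. So the real question is whether your forward direction is a valid proof, and it is not: it has a genuine gap, located exactly where the actual difficulty of the theorem lies.

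Your plan for the forward direction is Keisler's original good-ultrafilter strategy: take a countably incomplete $\kappa^+$-good ultrafilter $\mc{U}$ on $\kappa$ (these do exist in ZFC, by Kunen), prove the metric analogue of ``good ultrapowers are $\kappa^+$-saturated,'' and then quote uniqueness of saturated (or special) models. The obstruction you dismiss as ``secondary'' is in fact fatal: for a regular ultrafilter on $\kappa$ the ultrapowers $A^{\mc{U}}$ and $B^{\mc{U}}$ have density character $2^\kappa$, while the saturation obtained is only $\kappa^+$, so the back-and-forth closes up only when $2^\kappa = \kappa^+$ --- an instance of GCH. This is precisely why Keisler's 1961 proof assumed GCH and why the unconditional statement is Shelah's theorem. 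Your proposed ZFC repair, that ``ultrapowers by regular good ultrafilters are special,'' is not a theorem; indeed, special models of cardinality $2^\kappa$ need not exist at all. A special model of cardinality $\lambda$ requires $\mu^+$-saturated elementary submodels for every cardinal $\mu<\lambda$, and for theories with the order property (e.g.\ atomless Boolean algebras, or the abelian C*-algebras considered in this paper) a $\mu^+$-saturated model has density at least $2^\mu$; thus specialness of a model of cardinality $2^\kappa$ forces $2^\mu\leq 2^\kappa$ for all $\mu<2^\kappa$, which fails in Easton-style models (say $2^{\aleph_0}=\aleph_2$ and $2^{\aleph_1}$ large). So an ultrapower of a separable structure by a good regular ultrafilter on $\omega$ can fail to be special, and no uniqueness theorem applies. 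Removing GCH requires a genuinely different argument: Shelah constructs the ultrafilter itself by transfinite induction, using an independent family and threading the back-and-forth system between the two prospective ultrapowers through the construction of the filter, rather than first securing saturation and then invoking uniqueness; the metric version cited by the paper follows this route, with extra care needed to pass from approximate to exact satisfaction. As written, your argument proves the theorem only under GCH.
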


\begin{defin}
Let $A$ be a C*-algebra.  We say that the theory of $A$ \defined{has quantifier elimination} if for any formula $\phi(\overline x)$ and any $\epsilon>0$ there is quantifier-free formula $\psi(\overline x)$ such that for every C*-algebra $B$ satisfying $A \equiv B$, and any $\overline b\subseteq B$ (of the appropriate length) we have that in $B$,
\[\abs{\phi(\overline b)-\psi(\overline b)}\leq\epsilon.\]
\end{defin}

Countable degree-$1$ saturation is the weakest form of saturation that we will consider in this paper.  Even this modest degree of saturation for a C*-algebra has interesting consequences. In particular it implies several properties (see the detailed definition before) that were shown to hold in coronas of $\sigma$-unital algebras in \cite{pedersencorona}
\begin{defin}[\cite{pedersencorona}] Let $A$ be a C*-algebra. Then $A$ is said to be
\begin{itemize}
\item $SAW^*$ if any two $\sigma$-unital subalgebras $C,B$ are orthogonal (i.e., $bc=0$ for all $b\in B$ and $c\in C$)  if and only if are separated (i.e., there is $x\in A$ such that $xbx=b$ for all $b\in B$ and $xc=0$ for all $x\in C$).
\item $AA-CRISP$ if for any sequences of positive elements $(a_n),(b_n)$ such that for all $n$ we have $a_n\leq a_{n+1}\leq\ldots\leq b_{n+1}\leq b_n$ and any separable $D\subseteq A$ such that for all $d\in D$ we have $\lim_n\norm{[d,a_n]}=0$, there is $c\in A^+$ such that $a_n\leq c\leq  b_n$ for any $n$ and for all $d\in D$ we have $[c,d]=0$. 
\item $\sigma$-sub-Stonean if whenever $C\subseteq A$ is separable and $a,b\in A^+$ are such that $aCb=\{0\}$ then there are contractions $f,g\in C'\cap A$ such that $fg=0$, $fa=a$ and $gb=b$, $C'\cap A$ denoting the relative commutant of $C$ inside $A$.
\end{itemize}
\end{defin}

\begin{teo}[{\cite[Theorem 1]{farah2011countable}}]\label{thm:ConsequencesOfSat}
Let $A$ be a countably degree-$1$ saturated C*-algebra.  Then:
\begin{itemize}
\item{$A$ is SAW${}^*$, }
\item{$A$ is AA-CRISP,}
\item{$A$ satisfies the conclusion of Kasparov's technical theorem (see \cite[Prop. 2.8]{farah2011countable}),}
\item{$A$ is $\sigma$-sub-Stonean,}
\item{every derivation of a separable subalgebra of $A$ is of the form $\delta_b(x)= bx-xb$ for some $b \in A$}
\item {$A$ is not the tensor product of two infinite dimensional C*-algebras (this is a consequence of being $SAW^*$, see \cite{saeed}).}
\end{itemize}
\end{teo}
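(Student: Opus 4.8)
The plan is to recognize that every item on the list asserts the existence of one or two elements of $A$ subject to countably many constraints, and to produce those elements by realizing an appropriate type. Since only countable degree-$1$ saturation is available, the crux in each case is to express the constraints as membership of norms of degree-$1$ (that is, affine) $^*$-polynomials in prescribed compact sets, and then to check that the resulting type is approximately finitely satisfiable. For the latter I would in each case exhibit a witness coming from a finite stage of the relevant approximate unit, or from the boundedness of the data, so that condition (2) of Definition \ref{cd1s} holds; countable degree-$1$ saturation then promotes this to condition (1) and yields the desired element.

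The linearization step makes four of the six items routine. Working in the unitization if necessary, self-adjointness of a sought element $c$ is the degree-$1$ condition $\norm{c-c^*}=0$, and positivity of a self-adjoint contraction $h$ is the degree-$1$ condition $\norm{h-\tfrac12 1}\leq\tfrac12$, since this confines the spectrum of $h$ to $[0,1]$. Using these, after rescaling all data to be contractions, the order constraints $a_n\leq c\leq b_n$ of AA-CRISP become $\norm{(c-a_n)-\tfrac12 1}\leq\tfrac12$ and $\norm{(b_n-c)-\tfrac12 1}\leq\tfrac12$, while $[c,d]=0$ becomes $\norm{cd-dc}=0$; consistency follows from $a_n\uparrow$, $b_n\downarrow$, and $\lim_n\norm{[d,a_n]}=0$. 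For the conclusion of Kasparov's technical theorem one seeks a single positive contraction $M$ and imposes the degree-$1$ conditions $\norm{Ma_n}=0$, $\norm{(1-M)a_n'}=0$, and $\norm{[M,\delta_n]}=0$. For the derivation statement one seeks $b$ with $\norm{bd_n-d_nb-\delta(d_n)}=0$ for $(d_n)$ dense in the separable subalgebra, which is visibly degree-$1$, consistency coming from the fact that $\delta$ is bounded and is approximately implemented on each finite subset.

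For SAW${}^*$ the subtlety is that the natural separating relation $xbx=b$ is degree $2$. I would circumvent this by asking only that $x$ act as a left unit: realize the type $\set{\norm{xb_n-b_n}=0,\ \norm{xc_n}=0}$ with $(b_n)$ dense in $B$ and $(c_n)$ dense in $C$. Because $B$ is a $^*$-closed subalgebra, $xb=b$ for all $b\in B$ forces $bx=b$ for all $b\in B$ (apply the identity to $b^*$ and take adjoints), whence $xbx=bx=b$, so a realization of this degree-$1$ type separates $B$ and $C$; approximate finite satisfiability is witnessed by a single element of an approximate unit for $B$, which acts as an approximate left unit on finitely many elements of $B$ and annihilates $C$ exactly by orthogonality. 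The converse implication is the computation $bc=(xbx)c=xb(xc)=0$.

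The main obstacle is the $\sigma$-sub-Stonean property, where one must produce \emph{two} elements $f,g\in C'\cap A$ with $fa=a$, $gb=b$, and, crucially, $fg=0$; the orthogonality relation $\norm{fg}=0$ is genuinely bilinear and so, unlike every constraint above, cannot be presented as a degree-$1$ condition. I would resolve this not by a direct saturation argument but by deriving $\sigma$-sub-Stoneanness from SAW${}^*$, exploiting the classical equivalence between separating and orthogonality properties of this kind (in the spirit of Grove--Pedersen) together with the already-established SAW${}^*$ property. Finally, the last item is not proved by saturation at all: it is a formal consequence of $A$ being SAW${}^*$, as recorded in \cite{saeed}.
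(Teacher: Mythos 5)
First, a point of reference: the paper does not prove this theorem at all --- it is quoted verbatim from Farah and Hart \cite{farah2011countable} --- so your proposal can only be compared against the method of the cited source, which your general framework does mirror: each item is obtained by realizing a countable degree-$1$ type whose approximate finite satisfiability is witnessed by suitable approximate units, with self-adjointness and positivity handled either by the degree-$1$ condition $\norm{x-x^*}=0$ or by Lemma \ref{lem:SatisfiableByPositive}. On that standard, your AA-CRISP argument is correct; your SAW${}^*$ argument is correct up to a small slip (applying $xb=b$ to $b^*$ and taking adjoints yields $bx^*=b$, not $bx=b$, so you must make $x$ self-adjoint, which costs nothing since your witnesses are positive); the derivation item is correct modulo the fact that the consistency claim you assert in passing --- a derivation of a separable subalgebra is approximately implementable on finite sets, in norm and with a uniform bound --- is itself a nontrivial classical theorem (innerness in the bidual by Kadison--Sakai, then Kaplansky density and a Hahn--Banach convexity argument to upgrade weak approximation to norm approximation); and the tensor-product item is rightly deferred to \cite{saeed}.

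The genuine gap is the $\sigma$-sub-Stonean item. You are right that $\norm{fg}=0$ is degree $2$, but the proposed repair --- deducing $\sigma$-sub-Stoneanness from SAW${}^*$ ``in the spirit of Grove--Pedersen'' --- does not work: the entire content of $\sigma$-sub-Stoneanness beyond separation is the requirement $f,g\in C'\cap A$, and SAW${}^*$ produces a separating element with no commutation control whatsoever, while the Grove--Pedersen equivalence is a commutative statement with no commutant clause; no classical result bridges this. The bilinear obstruction is instead circumvented \emph{inside} degree-$1$ saturation by a one-variable reduction: realize, by a positive contraction $h$, the degree-$1$ type $\set{\norm{hc_n-c_nh}=0 \colon n\in\en}\cup\set{\norm{ha-a}=0,\ \norm{hb}=0}$, where $(c_n)$ is dense in $C$. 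Consistency is exactly where the hypotheses enter: an induction on words shows $aDb=\{0\}$ for $D=C^*(C\cup\set{a,b})$ (this also uses $ab=0$, which the definition tacitly requires --- otherwise take $C=\{0\}$, $a=b=1$ for a counterexample), hence the closed ideal $I$ of $D$ generated by $a$ satisfies $Ib=\{0\}$, and finite stages of a quasicentral approximate unit of $I$ in $D$ (Arveson, \cite{ArvesonNotes}) are the required witnesses. Given $h$, functional calculus finishes the proof: choose $\psi,\varphi\in C[0,1]$ with $\psi\varphi=0$, $\psi(1)=1$, $\varphi(0)=1$, and set $f=\psi(h)$, $g=\varphi(h)$; then $fa=a$ because $ha=a$, $gb=b$ because $hb=0$, $fg=0$, and $f,g\in C^*(h,1)\subseteq C'\cap A$. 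Note that the same mechanism --- a quasicentral approximate unit of the ideal generated by $A_1$ inside the separable algebra generated by $A_1$, $A_2$ and $\Delta$, whose orthogonality to $A_2$ rests on the hypothesis that $\Delta$ derives $A_1$ --- is what makes your KTT type consistent; as written, that consistency is asserted rather than checked, and it is the technical heart of that item.
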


It is useful to know that when a degree-$1$ type can be approximately finitely satisfied by elements of a certain kind then the type can be realized by elements of the same kind.

\begin{Lemma}[{\cite[Lemma 2.1]{farah2011countable}}]\label{lem:SatisfiableByPositive}
Let $A$ be a countably degree-$1$ saturated C*-algebra.  If a type can be finitely approximately satisfied by self-adjoint elements then it can be realized by self-adjoint elements, and similarly with ``self-adjoint" replaced by ``positive".
\end{Lemma}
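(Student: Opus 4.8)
The plan is to reduce both statements to the defining property of countable degree-$1$ saturation (Definition \ref{cd1s}) by enlarging the given type with extra degree-$1$ conditions that force any realization to consist of self-adjoint (respectively positive) elements, and then checking that this enlargement preserves approximate finite satisfiability.

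First I would treat the self-adjoint case. Suppose $\mathbf{t}(\overline x)=\{\phi_n(\overline x)\in K_n : n\in\en\}$ is a countable degree-$1$ type that is approximately finitely satisfied by self-adjoint tuples. For each variable $x_k$ the expression $x_k-x_k^*$ is a degree-$1$ ${}^*$-polynomial, so $\psi_k(\overline x)=\norm{x_k-x_k^*}$ is a degree-$1$ formula and the condition $\psi_k(\overline x)\in\{0\}$ is a legitimate degree-$1$ condition. Let $\mathbf{t}'$ be $\mathbf{t}$ together with all the conditions $\psi_k(\overline x)\in\{0\}$; this is again a countable degree-$1$ type. Given $\epsilon>0$ and a finite $\Delta$, choose by hypothesis a self-adjoint tuple $\overline b$ witnessing the conditions of $\mathbf{t}$ indexed by $\Delta$ up to $\epsilon$; since each $b_k$ is self-adjoint, $\psi_k^A(\overline b)=0\in(\{0\})_\epsilon$, so $\overline b$ also witnesses the new conditions. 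Hence $\mathbf{t}'$ is approximately finitely satisfiable, and by countable degree-$1$ saturation it is realized by some $(b_k)\subseteq A_{\leq1}$. Realization is exact (condition (1) of Definition \ref{cd1s}), so $\norm{b_k-b_k^*}=0$, i.e. each $b_k$ is self-adjoint, and $(b_k)$ realizes $\mathbf{t}$.

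For the positive case I would augment the type further. Working first in the unital case, add to $\mathbf{t}'$ the conditions $\norm{1_A-x_k}\in[0,1]$; since $1_A\in A_{\leq1}$ and $1_A-x_k$ is a degree-$1$ ${}^*$-polynomial, these are degree-$1$ conditions and $[0,1]$ is compact. If $0\le b_k\le 1$ then $b_k$ is self-adjoint and $\norm{1_A-b_k}\le1$, so any positive approximate realizer of $\mathbf{t}$ witnesses all the added conditions exactly; thus the enlarged type is approximately finitely satisfiable and, by saturation, realized by some $(c_k)\subseteq A_{\leq1}$. Each $c_k$ is then self-adjoint with $\norm{1_A-c_k}\le1$, whence $\operatorname{sp}(c_k)\subseteq[0,2]$ and $c_k\ge0$; as $c_k\in A_{\leq1}$ we even obtain $0\le c_k\le1$, so $(c_k)$ is the desired positive realization.

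The only delicate point is the positive case, and specifically the encoding of positivity by a degree-$1$ condition: self-adjointness is immediate from $\norm{x_k-x_k^*}=0$, but there is no ${}^*$-polynomial identity cutting out the positive cone, so one must instead use a condition of affine (degree-$1$) type, for which the unit is the natural device. In the non-unital case the same argument is carried out after adjoining a unit, using that a contraction $a\in A$ is positive if and only if $\norm{1_{\tilde A}-a}\le 1$ computed in the unitization $\tilde A$; here one must check that this norm is expressible by a degree-$1$ condition over $A$, equivalently that the enlarged type remains a degree-$1$ type realizable in $A_{\leq1}$. Once this formulation is in place, the verification of approximate finite satisfiability and the final appeal to countable degree-$1$ saturation are identical to the self-adjoint case.
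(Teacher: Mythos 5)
Your proposal has no proof in the paper to be compared against: the paper quotes this lemma from \cite[Lemma 2.1]{farah2011countable} without reproving it. Measured against the standard argument, your self-adjoint case is correct and is exactly that argument, and your positive case is also correct \emph{when $A$ is unital}: adding the degree-$1$ conditions $\norm{x_k-x_k^*}\in\{0\}$ and $\norm{1_A-x_k}\in[0,1]$ (legitimate since $1_A\in A_{\leq1}$) does precisely what you say.

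The genuine gap is the positive case for non-unital $A$, which you defer to a ``check'' that in fact fails. By Definition~\ref{cd1s}, degree-$1$ conditions only admit parameters from $A_{\leq 1}$, and $1_{\tilde A}\notin A$; the quantity $\norm{1_{\tilde A}-x_k}$ equals $\sup_{b\in A_{\leq 1}}\norm{b-x_kb}$, a quantified expression, and it admits no degree-$1$ rewriting over $A$, so the enlarged type is simply not a degree-$1$ type over $A$. Passing to $\tilde A$ does not help either: nothing guarantees that $\tilde A$ is countably degree-$1$ saturated, and a realization found in $\tilde A$ need not lie in $A$. Nor is the case vacuous: countably degree-$1$ saturated algebras can be non-unital, e.g.\ $\mc{K}(H)^{\mc{U}}$ is countably saturated by Fact~\ref{fact:FDSat} but has no unit. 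A repair must stay inside $A$ and needs an extra idea, for instance: list countably many positive partial realizers $b_i$ of the type; by the (already proved) self-adjoint case realize the type $\{\norm{xb_i-b_i}=0,\ \norm{b_ix-b_i}=0,\ \norm{x-x^*}=0 : i\in\en\}$, which is approximately finitely satisfied by an approximate unit of the separable subalgebra generated by the $b_i$; square the realization to get a positive contraction $h$ with $hb_i=b_ih=b_i$; then augment the original type by the degree-$1$ conditions $\norm{x_k-x_k^*}=0$, $\norm{x_k-hx_kh}=0$ and $\norm{h^2-hx_kh}\leq 1$, which each $b_i$ satisfies exactly (as $hb_ih=b_i$ and $h^2, b_i$ are positive contractions). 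Any realization $c$ then satisfies $c=h^nch^n$ for all $n$, hence $c=qcq$ where $q=\chi_{\{1\}}(h)\in A^{**}$, and compressing $\norm{h^2-c}\leq 1$ by $q$ gives $\norm{q-c}\leq 1$, forcing $c\geq 0$ in the corner $qA^{**}q$ and hence in $A$. So the non-unital case is true, but it costs a second application of saturation and a spectral argument, not the routine bookkeeping your last paragraph suggests.
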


We will also make use of the converse of the preceding lemma, which says that to check countable degree-$1$ saturation it is sufficient to check that types which are approximately finitely satisfiable by positive elements are realized by positive elements.

\begin{Lemma}\label{positivepartial}
Suppose that $A$ is a C*-algebra that is not countably degree-$1$ saturated. Then there is a countable degree-$1$ type which is approximately finitely satisfiable by positive elements of $A$ but is not realized by any positive element of $A$.
\end{Lemma}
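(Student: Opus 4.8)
The plan is to deduce the existence of such a positive type directly from the failure of saturation by a coding trick. Negating Definition~\ref{cd1s} for the case where $\Phi$ is the set of degree-$1$ $^*$-polynomials, and using the remark that the compact sets $K_n$ may be taken to be singletons, the hypothesis furnishes a countable degree-$1$ type $t=\{\,\|P_n(\overline x)\|=r_n : n\in\en\,\}$, where each $P_n$ is a $1$-degree $^*$-polynomial with parameters in $A_{\leq1}$, which is approximately finitely satisfiable by contractions but realized by none.

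First I would split each contraction variable $x_k$ into its real and imaginary parts and each of those into positive parts: introduce positive variables $p_k,q_k,r_k,s_k$ and perform the substitution $x_k\mapsto (p_k-q_k)+i(r_k-s_k)$, so that $x_k^*\mapsto(p_k-q_k)-i(r_k-s_k)$ since positive elements are self-adjoint. As this substitution is linear, each resulting expression $\tilde P_n$ is again a $1$-degree $^*$-polynomial, now in the variables $\overline p,\overline q,\overline r,\overline s$. I would then let $t'$ consist of the conditions $\|\tilde P_n(\overline p,\overline q,\overline r,\overline s)\|=r_n$ for $n\in\en$ together with the auxiliary conditions $\|(p_k-q_k)+i(r_k-s_k)\|\in[0,1]$ for each $k$. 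Every such condition has degree $1$ and $[0,1]$ is compact, so $t'$ is a legitimate countable degree-$1$ type.

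To see that $t'$ is approximately finitely satisfiable by positive elements, fix $\epsilon>0$ and finitely many conditions. Choosing contractions $\overline b$ that witness the corresponding conditions of $t$ to within $\epsilon$, I would set $p_k-q_k=\operatorname{Re}(b_k)$ and $r_k-s_k=\operatorname{Im}(b_k)$ using the orthogonal positive/negative part decomposition from continuous functional calculus; the resulting $p_k,q_k,r_k,s_k$ are positive elements of $A_{\leq1}$. Since $(p_k-q_k)+i(r_k-s_k)=b_k$, the auxiliary conditions hold exactly and $\tilde P_n$ takes the same value as $P_n(\overline b)$, so the chosen finite subtype is $\epsilon$-satisfied by these positive elements.

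It remains to show $t'$ is not realized by positive elements, which is where the one delicate point lies. If positive contractions $p_k,q_k,r_k,s_k$ realized $t'$, put $x_k:=(p_k-q_k)+i(r_k-s_k)$; the auxiliary conditions force $\|x_k\|\leq1$, so $\overline x\subseteq A_{\leq1}$, while the remaining conditions give $\|P_n(\overline x)\|=\|\tilde P_n\|=r_n$ for all $n$, so $\overline x$ would realize $t$ -- a contradiction. The subtlety, and the reason the auxiliary conditions cannot be omitted, is that an arbitrary combination $(p_k-q_k)+i(r_k-s_k)$ of positive contractions need not be a contraction: even when the real and imaginary parts are genuine differences of positives, each part is only bounded by $1$ and their combination can have norm up to $\sqrt2$. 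One cannot recover contractivity by demanding orthogonality of the positive parts, as that is a degree-$2$ condition (and in any case would not suffice); instead, imposing the norm bound $\|x_k\|\leq1$ itself, which is only degree $1$, is exactly what closes the gap.
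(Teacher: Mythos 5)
Your proof is correct, and its core is the same as the paper's: replace each variable by a linear combination $x_k \mapsto (p_k-q_k)+i(r_k-s_k)$ of four positive variables, note that this substitution keeps the polynomials degree-$1$, and use the decomposition of an arbitrary element into positive parts to transfer approximate finite satisfiability. Where you genuinely differ is in adding the auxiliary conditions $\norm{(p_k-q_k)+i(r_k-s_k)}\in[0,1]$, which the paper's proof omits, and this is a real refinement rather than a redundancy. Definition \ref{cd1s} requires both partial and total solutions to lie in $A_{\leq 1}$; the forward direction is unproblematic in both proofs (the positive parts of a contraction are positive contractions, and the recombination reproduces the original element), but in the backward direction a recombination of positive contractions need not be a contraction, so a hypothetical positive realization of the transformed type would only yield elements of norm greater than $1$ satisfying the original conditions --- which does not by itself contradict the assumption that the original type has no realization \emph{in the unit ball}. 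Your norm conditions, which are degree-$1$ with compact target $[0,1]$, close exactly this gap, and you correctly observe that orthogonality conditions could not be used instead (they are degree $2$). The only slip is quantitative and harmless: the bound $\sqrt{2}$ for $\norm{a+ib}$ with $a,b$ self-adjoint contractions is the abelian bound; in a noncommutative C*-algebra the norm can be as large as $2$. Either way it can exceed $1$, which is all your argument needs.
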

\begin{proof}
Let $(P_n(\overline{x}))_{n \in \mathbb{N}}$ be degree-$1$ polynomials, and $(K_n)_{n \in \mathbb{N}}$ compact sets, such that the type $\set{\norm{P_n(\overline{x})} \in K_n : n \in \mathbb{N}}$ is approximately finitely satisfiable but not satisfiable in $A$.  For each variable $x_k$, we introduce new variables $v_k, w_k, y_k$, and $z_k$.  For each $n$, let $Q_n(\overline{v}, \overline{w}, \overline{y}, \overline{z})$ be the polynomial obtained by replacing each $x_k$ in $P_n$ by $v_k + iw_k - y_k - iz_k$.  Since every $x \in A$ can be written as $x = v+iw-y-iz$ where $v, w, y, z \in A^+$, it follows that $\set{\norm{Q_n(\overline{v}, \overline{w}, \overline{y}, \overline{z})} \in K_n : n \in \mathbb{N}}$ is approximately finitely satisfiable (respectively, satisfiable) by positive elements in $A$ if and only if $\set{\norm{P_n(\overline{x})} \in K_n : n \in \mathbb{N}}$ is approximately finitely satisfiable (respectively, satisfiable).
\end{proof}

The first example of an algebra which fails to be countably degree-$1$ saturated is $\mathcal B(H)$, where $H$ is an infinite dimensional separable Hilbert space.  In fact, no infinite dimensional separable C*-algebra can be countably degree-$1$ saturated; this was observed in \cite[Proposition 2.3]{farah2011countable}.  We include here a proof of the slightly stronger result, enlarging the class of algebras that are not countably degree-$1$ saturated.

\begin{defin}\label{def:sccc}
A C*-algebra $A$ has \defined{few orthogonal positive elements} if every family of pairwise orthogonal positive elements of $A$ of norm $1$ is countable.
\end{defin}

\begin{remark}This condition was introduced recently in \cite{Masumoto} under the name \emph{strong countable chain condition}, where it was expressed in terms of the cardinality of a family of pairwise orthogonal hereditary ${}^*$-subalgebras. On the other hand, in general topology this name was already introduced by Hausdorff in a different context, so we have given this property a new name to avoid overlaps.  In the non-abelian case, is is not known whether or not this condition coincides with the notion of countable chain condition for any partial order.
\end{remark}

In Section \ref{section:Topology} of this paper we will further examine the property of having few orthogonal positive elements in the context of abelian C*-algebras.  For now, we have the following lemma:

\begin{Lemma}\label{lem:scccbad}
If an infinite dimensional C*-algebra $A$ has few orthogonal positive elements, then $A$ is not countably degree-$1$ saturated.
\end{Lemma}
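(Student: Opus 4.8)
The plan is to exhibit an explicit countable degree-$1$ type that is approximately finitely satisfiable but cannot be realized, using the hypothesis to control a maximal orthogonal family. First I would use Zorn's Lemma to fix a family $\mc E$ of pairwise orthogonal positive norm-$1$ elements of $A$ that is maximal with respect to inclusion. By the hypothesis that $A$ has few orthogonal positive elements, $\mc E$ is countable; and since $A$ is infinite dimensional it contains an infinite pairwise orthogonal family of positive norm-$1$ elements (take a positive element with infinite spectrum and apply functional calculus to bump functions with disjoint supports), so any maximal such family, in particular $\mc E$, is infinite. Thus I may enumerate $\mc E = \set{e_n : n \in \en}$.

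Next I would consider the degree-$1$ type $t(x)$ in a single variable consisting of the conditions $\norm{x} = 1$ together with $\norm{x e_n} = 0$ for every $n \in \en$; here each $e_n$ is a parameter from $A_{\leq 1}$ and $x e_n$ is linear in $x$, so these are genuine degree-$1$ conditions. This type is approximately finitely satisfiable by positive elements: given a finite $F \subseteq \en$ and $\epsilon > 0$, picking any index $m \in \en \setminus F$ (possible since $\mc E$ is infinite), the element $e_m$ is positive, has norm $1$, and satisfies $\norm{e_m e_n} = 0$ exactly for all $n \in F$.

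To finish I would argue by contradiction: assuming $A$ is countably degree-$1$ saturated, Lemma \ref{lem:SatisfiableByPositive} upgrades the approximate finite satisfiability by positive elements to an actual realization by a positive element $x_0 \in A$. Then $x_0 \geq 0$, $\norm{x_0} = 1$, and $\norm{x_0 e_n} = 0$, hence $x_0 e_n = e_n x_0 = 0$, for every $n$; so $x_0$ is a positive norm-$1$ element orthogonal to every member of $\mc E$, and $\mc E \cup \set{x_0}$ is a strictly larger pairwise orthogonal family (note $x_0 \notin \mc E$, since a nonzero positive element cannot be orthogonal to itself), contradicting maximality.

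The step I expect to require the most care is the interplay between orthogonality, which is a statement about positive elements, and the form of the type: realizing $t(x)$ by an arbitrary contraction orthogonal to all $e_n$ would \emph{not} contradict maximality, so it is essential to pass to a positive realization. This is exactly what Lemma \ref{lem:SatisfiableByPositive} provides, and invoking it is the crux of the argument; the only other point needing attention is verifying that infinite dimensionality forces $\mc E$ to be infinite, which guarantees that every finite subset of the conditions leaves a spare $e_m$ to witness approximate satisfiability.
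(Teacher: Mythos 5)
Your overall strategy is exactly the paper's: a maximal family of pairwise orthogonal positive norm-$1$ elements obtained from Zorn's lemma, countable by the few-orthogonal-positive-elements hypothesis; the degree-$1$ type $\set{\norm{x}=1}\cup\set{\norm{xe_n}=0 : n\in\en}$; approximate finite satisfiability witnessed by spare members of the family; and a positive realization via Lemma \ref{lem:SatisfiableByPositive} contradicting maximality. However, one step of your argument is false as stated: the existence of \emph{some} infinite pairwise orthogonal family does not imply that \emph{every} maximal such family is infinite, since maximality here means maximality under inclusion, not maximality of cardinality. Concretely, in any unital C*-algebra the singleton $\set{1_A}$ is already a maximal pairwise orthogonal family of positive norm-$1$ elements, because $b\cdot 1_A=0$ forces $b=0$; similarly $\set{a}$ is maximal whenever $a$ is a positive norm-$1$ element with trivial kernel and dense range in a faithful representation, e.g.\ $a=\operatorname{diag}(1,1/2,1/3,\dots)$ in $\mathcal{K}(H)$. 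These algebras are infinite dimensional and separable, hence squarely within the scope of the lemma. If your application of Zorn's lemma happens to return such a finite family $\mc{E}$, there is no spare $e_m$, your type is not even approximately finitely satisfiable, and the argument produces no contradiction.

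The repair is small and stays entirely within your proposal: first construct the infinite pairwise orthogonal family $\mc{E}_0$ (your functional-calculus argument does this), and only then apply Zorn's lemma, to the collection of pairwise orthogonal families of positive norm-$1$ elements \emph{containing} $\mc{E}_0$. The resulting family $\mc{E}\supseteq\mc{E}_0$ is infinite, countable by hypothesis, and still maximal among all such families: any positive norm-$1$ element orthogonal to every member of $\mc{E}$ would enlarge it while keeping $\mc{E}_0$ inside. With this modification the rest of your proof, which is correct, goes through verbatim. It is worth noting that the paper's own proof glosses over the same point, enumerating its maximal family as $\{a_n\}_{n\in\en}$ without justifying that it is infinite; your instinct to address the issue explicitly was right, but the justification you gave does not do the job, and the order of the two steps (build the infinite family first, maximize second) is what makes it work.
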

\begin{proof}
Suppose to the contrary that $A$ has few orthogonal positive elements and is countably degree-$1$ saturated.  Using Zorn's lemma, find a set $Z \subseteq A_1^+$ which is maximal (under inclusion) with respect to the property that if $x, y \in Z$ and $x \neq y$, then $xy = 0$.  By hypothesis, the set $Z$ is countable; list it as $Z = \{a_n\}_{n \in \mathbb{N}}$.

For each $n \in \mathbb{N}$, define $P_n(x) = a_n x$, and let $K_n = \{0\}$.  Let $P_{-1}(x) = x$, and $K_{-1} = \{1\}$.  The type $\set{\norm{P_n(x)} \in K_n : n \geq -1}$ is finitely satisfiable.  Indeed, by definition of $Z$, for any $m \in \mathbb{N}$ and any $0 \leq n \leq m$ we have $\norm{P_n(a_{m+1})} = \norm{a_na_{m+1}} = 0$, and $\norm{a_{m+1}} = 1$.  By countable degree-$1$ saturation and Lemma \ref{lem:SatisfiableByPositive} there is a positive element $b \in A_1^+$ such that $\norm{P_n(b)} = 0$ for all $n \in \mathbb{N}$.  This contradicts the maximality of $Z$.
\end{proof}

Subalgebras of $\mc{B}(H)$ clearly have few positive orthogonal elements, whenever $H$ is separable.  As a result, we obtain the following.

\begin{Cor}\label{Hnotseparable}
No infinite dimensional subalgebra of $\mc{B}(H)$, with $H$ separable, can be countably degree-$1$ saturated.
\end{Cor}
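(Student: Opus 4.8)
The plan is to deduce the statement immediately from Lemma \ref{lem:scccbad}, whose hypothesis is precisely that the algebra has few orthogonal positive elements. The whole task thus reduces to verifying that every subalgebra $A \subseteq \mc{B}(H)$, with $H$ separable, has few orthogonal positive elements; since $A$ is assumed infinite dimensional, Lemma \ref{lem:scccbad} then yields the conclusion.

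To carry this out I would start from an arbitrary family $\set{a_i}_{i \in I} \subseteq A^+$ of pairwise orthogonal positive elements with $\norm{a_i} = 1$, and show that $I$ is countable. The key observation is that orthogonality of positive operators is orthogonality of ranges: if $a, b \in \mc{B}(H)^+$ satisfy $ab = 0$, then $\ran b \subseteq \ker a = \overline{\ran a}^{\perp}$, using that $\ker a = \overline{\ran a}^{\perp}$ for a positive (hence self-adjoint) operator. Hence $\overline{\ran a} \perp \overline{\ran b}$. Applying this to the family $\set{a_i}$, the closed subspaces $\overline{\ran a_i}$ are pairwise orthogonal, and each is nonzero since $\norm{a_i} = 1$. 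Choosing a unit vector $\eta_i \in \overline{\ran a_i}$ for each $i$ produces an orthonormal family $\set{\eta_i}_{i \in I}$ in $H$; since $H$ is separable, every orthonormal family in $H$ is countable, so $I$ is countable. This shows $A$ has few orthogonal positive elements, and Lemma \ref{lem:scccbad} completes the proof.

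There is no serious obstacle here: the only point requiring any care is the translation of $ab = 0$ into orthogonality of $\overline{\ran a}$ and $\overline{\ran b}$, which is exactly the step that lets separability of $H$ bite. This is why the surrounding text can assert the range-orthogonality ``clearly'', and the genuine content of the corollary resides entirely in Lemma \ref{lem:scccbad}.
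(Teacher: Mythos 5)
Your proposal is correct and follows exactly the paper's route: the paper deduces the corollary from Lemma \ref{lem:scccbad} after remarking that subalgebras of $\mc{B}(H)$, $H$ separable, ``clearly'' have few orthogonal positive elements. Your range-orthogonality argument is precisely the justification of that ``clearly,'' so the two proofs coincide.
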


Corollary \ref{Hnotseparable} shows that many familiar C*-algebras fail to be countably degree-$1$ saturated.  In particular, it implies that no infinite dimensional separable C*-algebra is countable degree-$1$ saturated.  Corollary \ref{Hnotseparable} also shows that the class of countably degree-$1$ saturated algebras is not closed under taking inductive limits (consider, for example, the CAR $\bigotimes_{i=1}^{\infty}M_2(\mathbb C)$, or any AF algebra) or subalgebras.  On the other hand, several examples of countably degree-$1$ saturated algebras are known.  It was shown in \cite{farah2011countable} that every corona of a $\sigma$-unital algebra is countably degree-$1$ saturated.  Recently Voiculescu in \cite{Voiculescu} found examples of  algebras which are not C*-algebras, but which have the unexpected property that their coronas are countably degree-$1$ saturated C*-algebras.  The results of the following section expand the list of examples of countably degree-$1$ saturated C*-algebras.

\section{Coronas of non-$\sigma$-unital algebras}\label{section:Calkin}
Our goal in this section is to give examples of coronas of non-$\sigma$-unital algebras which are countably degree-$1$ saturated.    For convenience, we recall some definitions which we will need:

\begin{defin}\label{def:EssIdeal}
A C*-algebra $A$ is \defined{$\sigma$-unital} (see \cite[II.4.2.4]{BlackadarOpAlg}) if it has a countable approximate identity, that is, a sequence $(e_n)_{n \in \en}$ such that for all $x \in A$,
\[\lim_{n\to\infty}\norm{e_nx - x} = \lim_{n \to \infty}\norm{xe_n-x} = 0.\]
A closed ideal $I \subseteq A$ is \defined{essential} (see \cite[II.5.4.7]{BlackadarOpAlg}) if it has trivial annihilator, that is, if $\set{x \in A : Ix = \{0\}} = \{0\}$.
\end{defin}

\subsection*{Motivating example}
\begin{Notation}
Let $\mathcal R$ be the hyperfinite II$_1$ factor.  Let $M=\mathcal R\,\overline\otimes\, \mathcal B(H)$ be the unique hyperfinite II$_\infty$ factor associated to $\mathcal R$, and let $\tau$ be its unique trace.  We denote by $\mathcal K_M$ the unique norm closed two-sided ideal generated by the positive elements of finite trace in $M$. 
\end{Notation}
 Note that $M$ is the multiplier algebra of $\mathcal K_M$, so the quotient $M / \mc{K}_M$ is the corona of $M$. 

Any ideal in a von Neumann algebra is generated, as a linear space, by its projections, hence $\mathcal K_M$ is the closure of the linear span in $M$ of the set of projections of finite trace.  In particular, $\mathcal R\otimes \mathcal K(H)\subsetneq \mathcal K_M$.  To see that the inclusion is proper, fix an orthonormal basis $(e_n)_{n \in \mathbb{N}}$ for $H$, and choose $(p_n)_{n \in \mathbb{N}}$ from $\mathcal R$ such that $\tau(p_n)=2^{-n}$ for all $n \in \mathbb{N}$.  For each $n$, let $q_{n}\in \mathcal B(H)$ be the projection onto $e_n$, and let $q = \sum_n p_n\otimes q_n$.  Then $q \in M$ is a projection of finite trace, but $q\notin \mathcal R\otimes \mathcal K(H)$.

We recall few well known properties of this object.
\begin{prop}[\cite{Phil90}]
\begin{enumerate}
\item $\mathcal R\cong M_p(\mathcal R)$ for every prime number $p$. Consequently $M_n(\mathcal R)\cong M_m(\mathcal R)$ for every $m,n\in\en$.
\item $K_0(\mathcal K_M)=\er=K_1(M/\mathcal K_M)$.
\item $\mathcal K_M$ is not $\sigma$-unital.
\item $\mathcal K_M\otimes \mathcal K(H)$ is not isomorphic to $\mathcal R\otimes\mathcal K(H)$.
\end{enumerate}
\end{prop}
\begin{proof}
\begin{enumerate}
\item This is because $M_p(\mathcal R)$ is hyperfinite and $\mathcal R$ is the unique hyperfinite II$_1$-factor.
\item Note that $K_0(M)=0=K_1(M)$ and apply the exactness of the six term $K$-sequence.
\item Suppose to the contrary that $(x_n)_{n \in \mathbb{N}}$ is a countable approximate identity in $\mc{K}_M$ formed by positive elements such that $0\leq x_n\leq 1$ for all $n$. Using spectral theory, we can find projections $p_n\in \mc{K}_M$ such that $\norm{p_nx_n-x_n}\leq 1/n$ for each $n$.  Then $(p_n)_{n \in \mathbb{N}}$ is again a countable approximate identity for $\mc{K}_M$.  For each $n \in \en$ define $q_n=\sup_{k\leq n} p_k\in \mc{K}_M$, and by passing to a subsequence we can suppose that $(q_n)_{n \in \en}$ is strictly increasing.  For each $n \in \en$ find a projection $r_n\leq q_{n+1}-q_n$ such that $\tau(r_n)\leq \frac{1}{2^n}$.  Then $r=\sum_{n\in\en} r_n\in \mc{K}_M$, and we have that for all $n \in \en$,
\[\norm{q_nr-r}=1.\]
This contradicts that $(q_n)_{n \in \en}$ is an approximate identity.
\item This follows from (3), since $\mathcal R\otimes \mathcal K(H)$ has a countable approximate identity and $\mathcal K_M\otimes \mathcal K(H)$ does not.  To see this, suppose that $(x_n)_{n \in \mathbb{N}}$ is a countable approximate identity for $\mathcal K_M\otimes \mathcal K(H)$, and let $p$ be a rank one projection in $\mathcal K(H)$.  Then $((1\otimes p)x_n(1\otimes p))_{n \in \mathbb{N}}$ is be a countable approximate identity for $\mathcal K_M\otimes p$, but $\mathcal K_M\otimes p\cong \mathcal K_M$, so this contradicts (3). 
\end{enumerate}
\end{proof}

There are many differences between the Calkin algebra and $M/\mathcal K_M$. Some of them are already clear from the $K$-theory considerations above, or from the fact that $\mathcal K(H)$ is separable.  Another difference, a little bit more subtle, is given by the following:
\begin{prop} 
Let $H$ be a separable Hilbert space, and let $Q$ be the canonical quotient map onto the Calkin algebra.  Let $(e_n)_{n \in \mathbb{N}}$ be an orthonormal basis for $H$, and let $S \in \mc{B}(H)$ be the unilateral shift in $\mathcal B(H)$ defined by $S(e_n)=e_{n+1}$ for all $n$. Then neither $S$ nor $Q(S)$ has a square root, but $1\otimes S\in \mathcal R\,\overline\otimes\,\mathcal B(H)$ does have a square root.
\end{prop}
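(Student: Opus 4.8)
The plan is to split the three assertions into two groups handled by completely different mechanisms: a Fredholm-index parity obstruction rules out square roots of $S$ and of $Q(S)$, while an explicit $2\times 2$ matrix construction, made possible by the fact that $\mathcal R$ absorbs $M_2$, produces a square root of $1\otimes S$.

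First I would record that $S$ is Fredholm. Since $S^*S=1$ and $SS^*=1-q_1$, where $q_1$ is the projection onto $e_1$, we have $\ker S=0$ and $\ker S^*=\mathbb{C}e_1$, so $\operatorname{ind}(S)=-1$. Suppose $S=T^2$. Because $T^2$ is Fredholm, $T$ itself is Fredholm: $\ker T\subseteq\ker T^2$ is finite dimensional, and $\operatorname{ran}T\supseteq\operatorname{ran}T^2$ is closed of finite codimension. Multiplicativity of the index then gives $-1=\operatorname{ind}(T^2)=2\operatorname{ind}(T)$, which is impossible since $-1$ is odd. For $Q(S)$ the same parity is the obstruction, now read off in the Calkin algebra. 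Since $S$ is Fredholm, $Q(S)$ is invertible there; if $Q(S)=a^2$, lift $a$ to some $T\in\mathcal B(H)$, so that $T^2=S+K$ with $K$ compact. As compact perturbations preserve the index, $T^2$ is Fredholm with $\operatorname{ind}(T^2)=\operatorname{ind}(S)=-1$, and the argument above again forces the contradiction $-1=2\operatorname{ind}(T)$. Equivalently, under the index isomorphism $K_1(\mathcal B(H)/\mathcal K(H))\cong\zet$, the class of $Q(S)$ is the odd number $-1$, hence not a square.

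The interesting point is the third assertion, where the integer obstruction dissolves in the II$_\infty$ factor. Here I would use that $\mathcal R\cong M_2(\mathcal R)$ (recorded above, following \cite{Phil90}) to identify $M=\mathcal R\,\overline\otimes\,\mathcal B(H)\cong M_2\big(\mathcal R\,\overline\otimes\,\mathcal B(H)\big)=M_2(M)$; under this identification the identity of $\mathcal R$ splits as $\operatorname{diag}(1,1)$, so $1\otimes S$ corresponds to $\operatorname{diag}(1\otimes S,\,1\otimes S)$. The elementary identity
\[\begin{pmatrix}0 & V\\ 1 & 0\end{pmatrix}^{2}=\begin{pmatrix}V & 0\\ 0 & V\end{pmatrix},\]
applied with $V=1\otimes S$, then exhibits $T=\begin{pmatrix}0 & 1\otimes S\\ 1 & 0\end{pmatrix}\in M_2(M)\cong M$ as a square root of $1\otimes S$.

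The routine part to verify is that $T^2$ being Fredholm forces $T$ Fredholm in the first two cases; the conceptual heart, and the point I expect to require the most care, is the contrast exposed by the last construction. I would emphasise that the very same block-matrix trick fails for $S$ and for $Q(S)$ precisely because the defect projection there cannot be halved: a square root would require splitting the defect into two equivalent pieces of ``half-integer size'', which is exactly the obstruction that $\operatorname{ind}(S)=-1$ is odd. In $M$, by contrast, the defect $1\otimes q_1$ has trace $1$ and splits into two Murray--von Neumann equivalent projections of trace $\tfrac12$; indeed one can check that the $T$ above is an isometry whose defect $1-TT^*$ has trace $\tfrac12$, making the discrepancy with the integer index fully transparent. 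Thus the main thing to get right is the identification $M\cong M_2(M)$ carrying $1\otimes S$ to the diagonal operator, after which the square root is immediate.
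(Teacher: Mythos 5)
Your proof is correct and follows essentially the same route as the paper: the Fredholm index parity obstruction for $S$ and $Q(S)$, and the isomorphism $\mathcal R\cong M_2(\mathcal R)$ to produce a square root of $1\otimes S$. Your block matrix $\begin{pmatrix}0 & 1\otimes S\\ 1 & 0\end{pmatrix}$ is precisely the paper's operator $1\otimes S'$, where $S'$ is the interleaved shift $S'(e_n)=f_n$, $S'(f_n)=e_{n+1}$ written with respect to the splitting of the enlarged Hilbert space, so the two constructions coincide.
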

\begin{proof}
Suppose that $Q(T) \in\mathcal C(H)$ is such that $Q(T)^2=Q(S)$.  Since $Q(S)$ is invertible in the Calkin algebra so is $Q(T)$.  The Fredholm index of $S$ is $-1$, so if $n \in \zet$ is the Fredholm index of $T$ then $2n=-1$, which is impossible.  Therefore $Q(S)$ has no square root, and hence neither does $S$.

For the second assertion recall that $\mathcal R\cong M_2(\mathcal R)$, and so
\[\mathcal R\,\overline\otimes\,\mc{B}(H)\cong M_2(\mathcal R\,\overline\otimes\,\mc{B}(H))=\mathcal R\,\overline \otimes\,(M_2\,\otimes\,\mc{B}(H)).\]
We view $\mc{B}(H)$ as embedded in $M_2 \otimes \mc{B}(H) = \mc{B}(H')$ for another Hilbert space $H'$; find $(f_n)_{n \in \mathbb{N}}$ such that $\set{e_n, f_n : n \in \mathbb{N}}$ is an orthonormal basis for $H'$.  Let $S' \in \mc{B}(H')$ be defined such that $S'(e_n) = f_n$ and $S'(f_n) = e_{n+1}$ for all $n$.  Then $T = 1 \otimes S' \in \mathcal R\,\overline{\otimes}\,\mc{B}(H')$, and $T^2 = 1 \otimes S$.
\end{proof}

A consequence of the previous proof, and of the fact that $\mathcal R\cong M_p(\mathcal R)$ for any integer $p$, is the following:
\begin{Cor}
$1\otimes S\in M$ has a $q^{\text{th}}$-root for every rational $q$.
\end{Cor}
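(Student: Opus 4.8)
The plan is to reduce the statement to the existence of $n$-th roots for every positive integer $n$, and then to assemble arbitrary rational roots from these. Interpreting a ``$q$-th root'' of $1\otimes S$, for $q=a/b$ with $a,b$ coprime positive integers, as an element $R$ with $R^a=(1\otimes S)^b$, it suffices to produce for each $n$ an element $T_n$ with $T_n^n=1\otimes S$: given such roots, one sets $R=T_a^{\,b}$ and computes $R^a=T_a^{\,ab}=(T_a^{\,a})^b=(1\otimes S)^b$. Thus the whole corollary follows once integer roots are in hand, and the argument for those is the natural generalization of the square-root construction in the previous proposition, with $M_n$ replacing $M_2$.

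First I would record that $\mathcal R\cong M_n(\mathcal R)$ for \emph{every} positive integer $n$, not just primes: this follows from part (1) of the preceding proposition by factoring $n$ into primes and iterating (or directly, since $M_n(\mathcal R)$ is hyperfinite II$_1$ and $\mathcal R$ is the unique such factor). Tensoring with $\mc{B}(H)$ gives
\[\mathcal R\,\overline\otimes\,\mc{B}(H)\cong M_n(\mathcal R\,\overline\otimes\,\mc{B}(H))=\mathcal R\,\overline\otimes\,(M_n\otimes\mc{B}(H)),\]
and I would identify $M_n\otimes\mc{B}(H)$ with $\mc{B}(H')$ for $H'=\mathbb C^n\otimes H$, writing $e_k^{(j)}=\delta_j\otimes e_k$ for $0\le j\le n-1$ and $k\in\en$, where $(\delta_j)$ is the standard basis of $\mathbb C^n$ and $(e_k)$ the chosen basis of $H$. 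Next I would define an isometry $S'\in\mc{B}(H')$ by the $n$-fold interleaving
\[S'(e_k^{(j)})=e_k^{(j+1)}\ (0\le j\le n-2),\qquad S'(e_k^{(n-1)})=e_{k+1}^{(0)}.\]
A direct bookkeeping check shows that each application of $S'$ advances the superscript by one modulo $n$ and increments $k$ precisely when the superscript wraps from $n-1$ to $0$; hence $(S')^n(e_k^{(j)})=e_{k+1}^{(j)}$ for all $j,k$, that is, $(S')^n=1_{M_n}\otimes S$. Setting $T_n=1_{\mathcal R}\otimes S'\in\mathcal R\,\overline\otimes\,\mc{B}(H')$, we get $T_n^n=1_{\mathcal R}\otimes 1_{M_n}\otimes S=1\otimes S$ after reabsorbing $\mathcal R\,\overline\otimes\,M_n\cong\mathcal R$, exactly as in the case $n=2$.

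The main obstacle is conceptual rather than technical: for the genuine shift $S\in\mc{B}(H)$ the Fredholm index is a $\zet$-valued obstruction to taking roots (as exploited in the previous proposition), and what dissolves this obstruction is precisely the matricial absorption $\mathcal R\cong M_n(\mathcal R)$, which lets the II$_\infty$ factor ``divide'' the index through its real-valued trace. Once this mechanism is isolated, the remaining work---verifying $(S')^n=1_{M_n}\otimes S$ and composing roots to form $R=T_a^{\,b}$---is routine. I would close by noting that all the $T_n$, and hence $R$, are isometries (each sends an orthonormal basis into an orthonormal set), so they are honest elements of $M$, and that positivity of $q$ is required since the unilateral shift is not invertible.
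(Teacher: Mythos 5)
Your proof is correct and is essentially the argument the paper intends: the paper's one-line justification (``a consequence of the previous proof, and of the fact that $\mathcal R\cong M_p(\mathcal R)$ for any integer $p$'') is precisely your $n$-fold interleaving shift generalizing the square-root construction, combined with composing integer roots to get rational ones. You have simply spelled out the details the paper leaves implicit, including the correct reading of a $q$-th root as $R^a=(1\otimes S)^b$ for $q=a/b$.
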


With the motivating example in mind, we turn to establishing countable degree-$1$ saturation of a class of algebras containing $M / \mc{K}_M$.
\subsection*{A weakening of the $\sigma$-unital assumption.}
We recall the following result, which may be found in \cite[Corollary 6.3]{pedersencorona}:
\begin{Lemma}\label{lemma1}
Let $A$ be a C*-algebra, $S\in A_1$ and $T\in A_{\leq1}^+$. Then 
\[\norm{[S,T]}=\epsilon\leq \frac{1}{4} \Rightarrow \norm{[S,T^{1/2}]}\leq \frac{5}{4}\sqrt\epsilon.\]
\end{Lemma}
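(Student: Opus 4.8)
The plan is to express the square root through an integral of resolvents and to estimate the resulting commutator in two complementary ways, depending on the scale $\lambda$. Since $T\in A_{\leq1}^+$, for every $\lambda>0$ the element $T+\lambda$ is invertible with $\norm{(T+\lambda)^{-1}}\leq\lambda^{-1}$, so I would start from the standard functional-calculus formula $T^{1/2}=\frac1\pi\int_0^\infty \frac{T}{T+\lambda}\,\lambda^{-1/2}\,d\lambda$, which is valid because the scalar identity $t^{1/2}=\frac1\pi\int_0^\infty \frac{t}{t+\lambda}\lambda^{-1/2}\,d\lambda$ holds for $t\in[0,1]$. Commuting $S$ under the integral and using the resolvent identity $[S,(T+\lambda)^{-1}]=-(T+\lambda)^{-1}[S,T](T+\lambda)^{-1}$ together with $\frac{T}{T+\lambda}=1-\lambda(T+\lambda)^{-1}$, I would obtain
\[[S,T^{1/2}]=\frac1\pi\int_0^\infty \lambda\,(T+\lambda)^{-1}[S,T](T+\lambda)^{-1}\,\lambda^{-1/2}\,d\lambda,\]
which reduces the problem to estimating the norm of the integrand at each scale.

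I would then bound the integrand two ways. For large $\lambda$ the resolvent identity gives directly
\[\norm{(T+\lambda)^{-1}[S,T](T+\lambda)^{-1}}\cdot\lambda\leq \lambda\cdot\lambda^{-2}\cdot\epsilon=\frac\epsilon\lambda .\]
This blows up as $\lambda\to0$, so for small $\lambda$ I would instead use a scale-free estimate that exploits positivity: for any positive $X$ and any contraction $S$ one has $\norm{[S,X]}\leq\norm{X}$. This follows from the layer-cake decomposition $X=\int_0^{\norm X}q_s\,ds$ into spectral projections $q_s$ of $X$, because for a projection $q$ and contraction $S$ the identity $[S,q]=(1-q)Sq-qS(1-q)$ writes $[S,q]$ as a sum of two operators with orthogonal domains and ranges, whence $\norm{[S,q]}\leq\norm{S}\leq1$. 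Applied to $X=\frac{T}{T+\lambda}$, whose spectrum lies in $[0,\tfrac1{1+\lambda}]$, this yields $\norm{[S,\tfrac{T}{T+\lambda}]}\leq\frac1{1+\lambda}$.

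Finally I would split the integral at the crossover $\lambda_0=\frac\epsilon{1-\epsilon}$ of the two bounds, using $\frac1{1+\lambda}$ on $[0,\lambda_0]$ and $\frac\epsilon\lambda$ on $[\lambda_0,\infty)$; the first piece integrates to $2\arctan\sqrt{\lambda_0}$ and the second to $2\sqrt{\epsilon(1-\epsilon)}$, giving a clean bound $\norm{[S,T^{1/2}]}\leq\frac1\pi(2\arctan\sqrt{\lambda_0}+2\sqrt{\epsilon(1-\epsilon)})$, which is $O(\sqrt\epsilon)$ with leading constant $\tfrac4\pi$. The main obstacle is the precise constant: this computation lands at roughly $\tfrac4\pi\sqrt\epsilon\approx1.27\sqrt\epsilon$, marginally above the claimed $\tfrac54\sqrt\epsilon$, and it is exactly here that the hypothesis $\epsilon\leq\tfrac14$ must enter. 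To recover $\tfrac54$ I would sharpen the small-$\lambda$ regime, for instance by replacing the integral representation with a spectral cutoff $T^{1/2}=g_\delta(T)+r_\delta(T)$, where $g_\delta$ is the modification of $\sqrt t$ that is linear on $[0,\delta]$ (Lipschitz constant $\delta^{-1/2}$) and $r_\delta\geq0$ has norm $\tfrac14\sqrt\delta$, and then optimizing $\delta\sim4\epsilon$. The delicate point of that route is to control $\norm{[S,g_\delta(T)]}$ by $\norm{[S,T]}$ without incurring the logarithmic loss that a generic operator-Lipschitz estimate for a function with a corner would produce.
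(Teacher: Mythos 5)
First, a point of comparison: the paper contains no proof of this lemma at all --- it is quoted directly from Pedersen (\cite[Corollary 6.3]{pedersencorona}) --- so your attempt can only be measured against the statement itself. Your main computation is correct in every step: the representation $T^{1/2}=\frac1\pi\int_0^\infty T(T+\lambda)^{-1}\lambda^{-1/2}\,d\lambda$, the resolvent commutator identity, the estimate $\norm{[S,X]}\leq\norm{X}$ for positive $X$ and contractive $S$ (your orthogonal-ranges argument for projections plus the layer-cake decomposition works, carried out in the enveloping von Neumann algebra; even more simply, $[S,X]=[S,X-\tfrac12\norm{X}1]$ together with $\norm{X-\tfrac12\norm{X}1}\leq\tfrac12\norm{X}$ gives it without any spectral projections), and the evaluation of the two pieces of the split integral. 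But the bound it produces,
\[
\norm{[S,T^{1/2}]}\leq\frac{2}{\pi}\Bigl(\arctan\sqrt{\tfrac{\epsilon}{1-\epsilon}}+\sqrt{\epsilon(1-\epsilon)}\Bigr)=\frac{4}{\pi}\sqrt{\epsilon}\,\bigl(1+O(\epsilon)\bigr),
\]
falls short of the statement: $\frac4\pi\approx1.273>\frac54$, and numerically your expression exceeds $\frac54\sqrt\epsilon$ for all $\epsilon$ below approximately $0.1$ --- precisely the regime in which the lemma is actually used in this paper (in Lemma \ref{thefns} it is applied with $\epsilon_n\to0$). Moreover, since you are integrating the pointwise minimum of your two bounds, your choice of crossover is already optimal; no re-splitting can push this scheme below $\frac4\pi\sqrt\epsilon$ asymptotically. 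So what you have actually proved is the lemma with constant $\frac4\pi$ in place of $\frac54$: a genuinely weaker statement.

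The proposed repair does not close this gap. It needs the estimate $\norm{[S,g_\delta(T)]}\leq\delta^{-1/2}\norm{[S,T]}$ for the function $g_\delta$ that is linear on $[0,\delta]$, i.e., that the scalar Lipschitz constant of $g_\delta$ controls commutators. That is exactly the operator-Lipschitz property, which fails in general; for a function with a corner the loss is generically logarithmic and unbounded, as you yourself flag. So the sketch rests on an unproved (and, absent further ideas, false) inequality, and the stated bound $\frac54\sqrt\epsilon$ under the hypothesis $\epsilon\leq\frac14$ remains unestablished. Two closing remarks. First, for the purposes of this paper the constant is immaterial: Lemma \ref{lemma1} is invoked with $\delta_n=10^{-100}\epsilon_n^2$ worth of slack, so your $\frac4\pi$-version would serve verbatim everywhere the paper uses the lemma. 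Second, if the statement with $\frac54$ is what must be proved, the honest route is to invoke or reproduce Pedersen's cited argument rather than the resolvent integral, which appears to be intrinsically capped at $\frac4\pi$.
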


The following lemma is the key technical ingredient of Theorem \ref{thm:FactorCtbleSat} below.  It is a strengthening of the construction used in \cite[Lemma 3.4]{farah2011countable}, as if $A$ is $\sigma$-unital and $M = M(A)$ is the multiplier algebra of $A$, then $M$ and $A$ satisfy the hypothesis of our lemma.

\begin{Lemma}\label{thefns}
Let $M$ be a unital C*-algebra, let $A \subseteq M$ be an essential ideal, and let $\pi : M \to M/A$ be the quotient map.  Suppose that there is an increasing sequence $(g_n)_{n\in\en} \subset A$ of positive elements whose supremum is $1_M$, and suppose that any increasing uniformly bounded sequence converges in $M$.

Let $(F_n)_{n\in\en}$ be an increasing sequence of finite subsets of the unit ball of $M$ and $(\epsilon_n)_{n \in \en}$ be a decreasing sequence converging to $0$, with $\epsilon_0<1/4$.  Then there is an increasing sequence $(e_n)_{n \in \en} \subset A_{\leq 1}^+$ such that, for all $n\in\en$ and $a\in F_n$, the following conditions hold, where $f_n=(e_{n+1}-e_n)^{1/2}$:
\begin{enumerate}[(1)]
\item\label{cond0} $\abs{\norm{(1-e_{n-2})a(1-e_{n-2})}-\norm{\pi(a)}}<\epsilon_n$ for all $n\geq2$,
\item\label{cond1} $\norm{[f_n,a]}<\epsilon_n$ for all $n$,
\item\label{cond2a} $\norm{f_n(1-e_{n-2}) - f_n} < \epsilon_n$ for all $n \geq 2$,
\item\label{cond2} $\norm{f_nf_m}<\epsilon_m$ for all $m\geq n+2$,
\item\label{cond3} $\norm{[f_n,f_{n+1}]}<\epsilon_{n+1}$ for all $n$,
\item\label{cond4}  $\norm{f_naf_n}\geq \norm{\pi(a)}-\epsilon_n$ for all $n$,
\item\label{cond5}$\sum_{n \in \en} f_n^2=1$;
\listpart{
and further, whenever $(x_n)_{n \in \en}$ is a bounded sequence from $M$, the following conditions also hold:}
\item\label{cond6}the series $\sum_{n \in \en} f_nx_nf_n$ converges to an element of $M$,
\item\label{cond7}
\[\norm{\sum_{n \in \en} f_nx_nf_n}\leq \sup_{n \in \en}\norm{x_n},\]
\item\label{cond8} whenever $\limsup_{n \to \infty}\norm{x_n}=\limsup_{n\to\infty}\norm{x_nf_n^2}$ we have
\[\limsup_{n \to \infty}\norm{x_nf_n^2}\leq\norm{\pi\left(\sum_{n \in \en} x_nf_n^2\right)}.\]
\end{enumerate}
\end{Lemma}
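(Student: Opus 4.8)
The plan is to build $(e_n)$ by a single induction in which each shell $c_n:=e_{n+1}-e_n$ is simultaneously made \emph{quasicentral}, \emph{spectrally separated}, and \emph{norming}. I would set $e_0=0$ and, having chosen $e_0\le\cdots\le e_n$ in $A_{\le1}^+$, pick $e_{n+1}\ge e_n$ in $A_{\le1}^+$ meeting four demands at once: it nearly absorbs $g_{n+1}$ (which, letting $n\to\infty$, forces $\sup_n e_n=1_M$ because $\sup_n g_n=1_M$); the shell $c_n$ nearly commutes with $F_n$; $c_n$ is spectrally separated from the earlier terms; and both $1-e_n$ and $c_n^{1/2}$ nearly norm $\pi(a)$ on $F_{n+2}\supseteq F_n$. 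Non-$\sigma$-unitality is not an obstacle here, because at each step I need only absorb \emph{finitely many} elements of $A$: given $a\in F$, choose $b_a\in A$ with $\norm{a-b_a}<\norm{\pi(a)}+\epsilon$, form $h=\sum_a(b_a^*b_a+b_ab_a^*)\in A^+$, and work inside the (automatically $\sigma$-unital) hereditary subalgebra generated by $e_n+g_{n+1}+h$; an increasing approximate unit there, dominating $e_n$, yields $e_{n+1}$ with $\norm{(1-e_{n+1})b_a(1-e_{n+1})}$ small, whence $\norm{(1-e_{n+1})a(1-e_{n+1})}\le\norm{\pi(a)}+2\epsilon$. Since $e_{n+1}\in A$ gives the reverse bound $\norm{(1-e_{n+1})a(1-e_{n+1})}\ge\norm{\pi(a)}$, this produces condition (1).

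For the commutator and separation demands I would average: using an Arveson-style quasicentral approximate unit for the separable set $\bigcup_m F_m$ inside $M$, the whole sequence can be arranged so that $\norm{[e_n,a]}\to0$ for each $a\in\bigcup_m F_m$, hence $\norm{[c_n,a]}<\delta_n$ with $\delta_n$ as small as I like; Lemma \ref{lemma1} then transfers this control to the square root and delivers condition (2), and the same estimate on adjacent shells gives condition (5). Spectral separation, obtained by also absorbing the earlier terms at each stage so that $e_ne_{n-2}\approx e_{n-2}\approx e_{n+1}e_{n-2}$, makes $\norm{c_ne_{n-2}}$ small; since $f_n=c_n^{1/2}$ and $\norm{f_ne_{n-2}}^2=\norm{e_{n-2}c_ne_{n-2}}$, and likewise the shells with indices differing by at least two almost annihilate one another, this is exactly conditions (3) and (4). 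Condition (6), the lower bound $\norm{f_naf_n}\ge\norm{\pi(a)}-\epsilon_n$, comes from choosing $c_n$ to capture the quotient norm: the norm is lower semicontinuous for the monotone convergence $(e'-e_n)^{1/2}a(e'-e_n)^{1/2}\to(1-e_n)^{1/2}a(1-e_n)^{1/2}$ as $e'\nearrow1$, and the limit has norm $\ge\norm{\pi(a)}$, so a sufficiently large $e_{n+1}$ forces $\norm{c_n^{1/2}ac_n^{1/2}}\ge\norm{\pi(a)}-\epsilon_n$. Each shell is thereby thin but of height near one, which is compatible with the telescoping $\sum_{n\le N}f_n^2=e_{N+1}-e_0\nearrow 1_M$ that yields condition (7) via the convergence hypothesis.

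The global conditions are where monotone completeness does the real work. For condition (8), the convergence of $\sum_n f_nx_nf_n$, I would first treat $x_n\ge0$: the partial sums are increasing and bounded above by $(\sup_n\norm{x_n})\sum_n f_n^2\le\sup_n\norm{x_n}$, hence converge in $M$ by hypothesis; a four-fold decomposition into positive parts then handles general $x_n$. Condition (9) I would read off by writing $\sum_n f_nx_nf_n=V^*\operatorname{diag}(x_n)V$ for the column $V=(f_n)_n$ with $V^*V=\sum_n f_n^2=1$, so that $\norm{\sum_n f_nx_nf_n}\le\norm{\operatorname{diag}(x_n)}=\sup_n\norm{x_n}$.

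The main obstacle is condition (10), the quotient-norm estimate $\limsup_n\norm{x_nf_n^2}\le\norm{\pi(\sum_n x_nf_n^2)}$. The point is that, modulo $A$, the sum cannot lose the mass carried by the individual shells. I would exploit the near-orthogonality of condition (4): along a subsequence $n_k$ realizing $\limsup_n\norm{x_nf_n^2}$, the contributions $x_{n_k}f_{n_k}^2$ form an almost-orthogonal family, and the hypothesis $\limsup_n\norm{x_n}=\limsup_n\norm{x_nf_n^2}$ ensures that the mass of $x_{n_k}$ is genuinely carried by the shell $f_{n_k}^2$ rather than dispersed across neighbouring levels. Passing to a limit functional along an ultrafilter on the $n_k$ that localizes on the high shells should produce a state on $M$ which annihilates $A$ and whose value on $\sum_n x_nf_n^2$ is at least $\limsup_n\norm{x_nf_n^2}-\epsilon$, giving the bound. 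Making this localization precise—guaranteeing that the limiting functional both norms a single shell and factors through $\pi$—is the delicate heart of the argument, and it is exactly here that the almost-orthogonality of conditions (3)--(5) and the equality hypothesis are indispensable.
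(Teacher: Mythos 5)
Most of your construction runs parallel to the paper's: an inductive choice of $e_{n+1}$ from the (directed) set of positive contractions of $A$, made simultaneously quasicentral for $\bigcup_m F_m$, absorbing of finitely many elements and of $g_{n+1}$, and spectrally separated from earlier terms, with Lemma \ref{lemma1} transferring commutator estimates to the square roots $f_n$; conditions (8) and (9) via monotone completeness and a norm estimate are also essentially the paper's argument (the paper uses completely positive maps where you use a column operator, an inessential difference). One repairable inaccuracy: your justification of condition (6) invokes ``lower semicontinuity for the monotone convergence,'' but the net $(e'-e_n)^{1/2}a(e'-e_n)^{1/2}$ is \emph{not} monotone in $e'$ (conjugation does not preserve order), so the purely order-theoretic semicontinuity available in a monotone complete algebra does not apply. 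What is actually needed, and what the paper sets up explicitly, is a faithful representation of $M$ (obtained from essentiality of $A$, \cite[II.6.1.6]{BlackadarOpAlg}) in which the approximate identity of $A$ converges strongly to $1_H$; there the convergence holds in the strong operator topology, where the norm is lower semicontinuous. You never introduce such a representation, but this step can be fixed.

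The genuine gap is condition (10), which you yourself flag as ``the delicate heart of the argument'' and leave as a sketch. The problem with the ultrafilter-limit-functional idea is precisely the point you defer: there is no reason the limiting functional annihilates $A$. Concretely, your functionals are of the form $y\mapsto (y\xi_m\mid\eta_m)$ with $\xi_m$ almost under the shell $f_m^2$ and $\eta_m$ the normalized image $x_mf_m^2\xi_m/\norm{x_mf_m^2\xi_m}$; for the weak${}^*$ limit along $\mc{U}$ to vanish on $A$ one needs $(\lambda\xi_m\mid\eta_m)\to 0$ for every $\lambda\in A$. This is automatic when $A=\mc{K}(H)$ (compacts send the weakly null, almost-orthogonal $\xi_m$ to norm-null vectors), but in the intended generality --- e.g.\ the Breuer ideal, where elements of $A$ can dominate infinitely many pairwise orthogonal unit vectors --- it fails for arbitrary $\lambda\in A$, and nothing in your conditions (3)--(5) rules out that some $\lambda\in A$ overlaps every shell by a fixed amount in norm. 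The paper closes exactly this hole by a different mechanism: working in the faithful representation, it builds the almost-orthogonal norming vectors $\xi_m$, observes that for every $\epsilon>0$ any \emph{finite} projection contains only finitely many of the $\xi_m$ in its range up to $\epsilon$, and then uses that the convex combinations of finite projections form an approximate identity $I$ of $A$, so that
\[\norm{\pi\left(\sum_{i\in\en}x_if_i^2\right)}=\lim_{\lambda\in I}\norm{\sum_{i\in\en}x_if_i^2-\lambda\left(\sum_{i\in\en}x_if_i^2\right)}\geq\limsup_{i\to\infty}\norm{x_if_i^2}.\]
That is, the quotient norm is computed along a concrete approximate identity whose members provably cannot see the tail of the $\xi_m$'s; some such structural input linking $A$ to the shells is indispensable, and without it your localization argument cannot be completed. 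As it stands, condition (10) is unproven in your proposal.
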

\begin{proof}
For each $n \in \en$ let $\delta_n=10^{-100}\epsilon_n^2$, and let $(g_n)_{n \in \en}$ be an increasing sequence in $A$ whose weak limit is $1$.  We will build a sequence $(e_n)_{n \in \en}$ satisfying the following conditions:
\begin{enumerate}[(a)]
\item\label{conda} $\abs{\norm{(1-e_{n-2})a(1-e_{n-2})}-\norm{\pi(a)}}<\epsilon_n$ for all $n\geq2$ and $a\in F_n$,
\item\label{condb} $0\leq e_0\leq\ldots\leq e_n\leq e_{n+1}\leq\ldots\leq 1$,  and for all $n$ we have $e_n\in A$,
\item\label{condc} $\norm{e_{n}e_k-e_k}<\delta_{n+1}$ for all $n>k$,
\item \label{condd} $\norm{[e_n,a]}<\delta_n$ for all $n \in \en$ and $a\in F_{n+1}$,
\item\label{conde} $\norm{(e_{n+1}-e_n)a}\geq \norm{\pi(a)}-\delta_{n}$ for all $n \in \en$ and $a\in F_{n}$
\item\label{condf} $\norm{(e_{m+1}-e_m)^{1/2}e_n(e_{m+1}-e_m)^{1/2}-(e_{m+1}-e_m)}<\delta_{n+1}$ for all $n>m+1$,
\item\label{condg} $e_{n+1}\geq  g_{n+1}$ for all $n \in \en$.
\end{enumerate}

We claim that such a sequence will satisfy \ref{cond0}--\ref{cond5}, in light of Lemma \ref{lemma1}.  Conditions \ref{cond0} and \ref{conda} are identical.  Condition \ref{condd} implies condition \ref{cond1}.  Condition \ref{condc} and the C*-identity imply condition \ref{cond2a}, which in turn implies conditions \ref{cond2} and \ref{cond3}.  We have also that conditions \ref{conde} and \ref{condg} imply respectively conditions \ref{cond4} and \ref{cond5}, so the claim is proved.  After the construction we will show that \ref{cond6}--\ref{cond8} also hold.

Take $\Lambda=\{\lambda\in A^+\colon \lambda\leq 1\}$ to be the approximate identity of positive contractions (indexed by itself) and let $\Lambda'$ be a subnet of $\Lambda$ that is quasicentral for $M$ (see \cite[Theorem 2.1]{pedersencorona} or  \cite[$\S$1]{ArvesonNotes}). 

Since $A$ is an essential ideal of $M$, by  \cite[II.6.1.6]{BlackadarOpAlg} there is a faithful representation $\beta$ on an Hilbert space $H$ such that \[1_{H}=\text{SOT}-\lim_{\lambda\in\Lambda'}\{\beta(\lambda)\}, \] 
Consequently, for every finite $F\subset M$, $\epsilon>0$ and $\lambda\in\Lambda'$ there is $\mu>\lambda$ such that for all $a \in F$,
\[\nu\geq \mu\Rightarrow \norm{(\nu-\lambda)a}\geq \norm{\pi(a)}-\epsilon.\]

We will proceed by induction. Let $e_{-1}=0$ and $\lambda_0\in\Lambda'$ be such that for all $\mu>\lambda_0$ and $a\in F_1$ we have $\norm{[\mu,a]}<\delta_0$. By cofinality of $\Lambda'$ in $\Lambda$ we can find a $e_0\in\Lambda'$ such that $e_0>\lambda_0, g_0$. Find now $\lambda_1>e_0$ such that for all $\mu>\lambda_1$ and $a\in F_2$ we have \[\norm{[\mu,a]}<\delta_1, \, \norm{(\mu-e_0)a}\geq \norm{\pi(a)}-\delta_1.\] Since we have that

\begin{equation}\label{limofnets}\norm{\pi(a)}=\lim_{\lambda\in\Lambda'}\norm{(1-\lambda)a(1-\lambda)}
\end{equation} we can also ensure that for all $a\in F_{3}$ and all $\mu>\lambda_{1}$, condition \ref{cond0} is satisfied.

Picking $e_1\in\Lambda'$ such that $e_1>\lambda_1,g_1$ we have that the base step is completed.

Suppose now that $e_0,\ldots,e_{n}, f_0,\ldots,f_{n-1}$ are constructed. 

We can choose $\lambda_{n+1}$ so that for all $\mu>\lambda_{n+1}$, with $\mu\in \Lambda'$, we have $\norm{[\mu,a]}<\delta_{n+1}/4$ and $\norm{(\mu-e_n)a}\geq \norm{\pi(a)}-\delta_n$  for $a\in F_{n+2}$. Moreover, by the fact that $\Lambda'$ is an approximate identity for $A$ we can have that $\norm{f_m\mu f_m-f_m^2}<\delta_{n+2}$ for every $m<n$ and that $\norm{\mu e_k-e_k}<\delta_{n+2}$ for all $k\leq n$. By equation (\ref{limofnets}) we can also ensure that for all $a\in F_{n+2}$ and all $\mu>\lambda_{n+1}$, condition \ref{cond0} is satisfied.

Once this $\lambda_{n+1}$ is picked we may choose \[e_{n+1}\in \Lambda',\,\,\,e_{n+1}> \lambda_{n+1}, \, g_{n+1},\] to end the induction.  

It is immediate from the construction that the sequence $(e_n)_{n \in \en}$ chosen in this way satisfies conditions \ref{conda} - \ref{condg}.  To complete the proof of the lemma we need to show that conditions \ref{cond6}, \ref{cond7} and \ref{cond8} are satisfied by the sequence $\{f_n\}$.

To prove \ref{cond6}, we may assume without loss of generality that each $x_n$ is a contraction.  Recall that every contraction in $M$ is a linear combination (with complex coefficients of norm $1$) of four positive elements of norm less than $1$, and addition and multiplication by scalar are weak operator continuous functions.  It is therefore sufficient to consider a sequence $(x_n)$ of positive contractions.
By positivity of $x_n$, we have that $(\sum_{i\leq n} f_ix_if_i)_{n \in \en}$ is an increasing uniformly bounded sequence, since for every $n$ we have 
\[\sum_{i\leq n} f_ix_if_i\leq\sum_{i\leq n}f_i^2\qquad\text{ and }\qquad f_nx_nf_n\geq 0.\] 
Hence $(\sum_{i\leq n} f_ix_if_i)_{n \in \en}$ converges in weak operator topology to an element of $M$  of bounded norm, namely the supremum of the sequence, which is $\sum_{n\in\en} f_nx_nf_n$.

For \ref{cond7}, consider the algebra $\prod_{k\in\en} M$ with the sup norm and the map $\phi_n\colon \prod_{k\in\en} M\to M$ such that $\phi_n((x_i))=f_nx_nf_n$. Each $\phi_n$ is completely positive, and since $f_n^2\leq\sum_{i\in\en} f_i^2=1$, also contractive.  For the same reason the maps $\psi_n\colon\prod_{k\in\en} M\to M$ defined as $\psi_n((x_i))=\sum_{j\leq n}f_jx_jf_j$ are completely positive and contractive. Take $\Psi$ to be the supremum of the maps $\psi_n$.  Then $\Psi((x_n))=\sum_{i\in\en} f_ix_if_i$. This map is a completely positive map of norm $1$, because $\norm{\Psi}=\norm{\Psi(1)}$, and from this condition \ref{cond7} follows.

For \ref{cond8}, we can suppose $\limsup_{i\to\infty}\norm{x_i}=\limsup_{i\to\infty}\norm{x_if_i^2} = 1$. Then for all $\epsilon > 0$ there is a sufficiently large $m \in \en$ and a unit vector $\xi_m \in H$ such that 
\[\norm{x_mf_m^2(\xi_m)}\geq 1-\epsilon.\]
Since $\norm{x_i}\leq 1$ for all $i$, we have that $\norm{f_m(\xi_m)}\geq 1-\epsilon$, that is, $\abs{(f_m^2\xi_m\mid\xi_m)}\geq 1-\epsilon$. In particular we have that $\norm{\xi_m-f_m^2(\xi_m)}\leq\epsilon$. 

Since $\sum f_i^2=1$ we have that $\xi_m$ and $\xi_n$ constructed in this way are almost orthogonal for all $n,m$. In particular, choosing $\epsilon$ small enough at every step, we are able to construct a sequence of unit vectors $\{\xi_m\}$ such that $\abs{(\xi_m\mid\xi_n)}\leq 1/2^m$ for $m>n$. But this means that for any finite projection $P\in M$ only finitely many $\xi_m$ are in the range of $P$ up to $\epsilon$ for every $\epsilon>0$. In particular, if $I$ is the set of all convex combinations of finite projections, we have that that 
\[\lim_{\lambda\in I}\norm{\sum_{i\in\en} x_if_i^2-\lambda\left( \sum_{i\in\en} x_if_i^2\right)}\geq 1.\] 
Since $I$ is an approximate identity for $A$ we have that 
\[\norm{\pi\left(\sum_{i\in\en} x_if_i^2\right)}=\lim_{\lambda\in I}\norm{\sum_{i\in\en} x_if_i^2-\lambda\left(\sum_{i\in\en} x_if_i^2\right)},\] 
as desired.
\end{proof}
 We can then proceed with the proof of the main theorem.
\begin{teo}\label{thm:FactorCtbleSat}
Let $M$ be a unital C*-algebra, and let $A \subseteq M$ be an essential ideal.  Suppose that there is an increasing sequence $(g_n)_{n\in\en} \subset A$ of positive elements whose supremum is $1_M$, and suppose that any increasing uniformly bounded sequence converges in $M$.  Then $M/A$ is countably degree-$1$ saturated.
\end{teo}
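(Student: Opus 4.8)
The plan is to reduce the problem to realizing positive types and then to realize an arbitrary such type by gluing together approximate solutions using the functions $f_n$ produced in Lemma \ref{thefns}. By Lemma \ref{positivepartial} (applied to $M/A$) it suffices to show that every countable degree-$1$ type over $M/A$ which is approximately finitely satisfiable by positive elements is in fact realized by a positive element. Using the compactness remark after Definition \ref{cd1s} I would take each compact set to be a singleton, so that the type reads $\set{\norm{P_n(\overline x)}=r_n : n\in\en}$, where each $P_n$ is a degree-$1$ $^*$-polynomial whose coefficients are parameters from $M/A$ and only finitely many variables occur in $P_0,\dots,P_n$.

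First I would record approximate solutions: for each $m$, approximate finite satisfiability gives a tuple $\overline c^{(m)}$ of positive contractions of $M/A$ with $\abs{\norm{P_n(\overline c^{(m)})}-r_n}<1/m$ for all $n\leq m$. Lifting the parameters and every entry of each $\overline c^{(m)}$ to positive contractions of $M$, and writing $y^{(n)}_m$ for the resulting lift of $P_n(\overline c^{(m)})$, I obtain $\norm{\pi(y^{(n)}_m)}=\norm{P_n(\overline c^{(m)})}\to r_n$ as $m\to\infty$. I would then apply Lemma \ref{thefns} to an increasing exhaustion $(F_n)$ of finite subsets of $M_{\leq1}$ chosen to contain all the parameters, all the lifted tuples, and all the $y^{(n)}_m$ relevant at each stage, together with a fast-decreasing sequence $(\epsilon_n)$; this yields the increasing sequence $(e_n)\subset A^+_{\leq1}$ and the functions $f_n=(e_{n+1}-e_n)^{1/2}$ satisfying \ref{cond0}--\ref{cond8}. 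The candidate solution for the variable $x_k$ would be
\[b_k=\pi\Big(\sum_m f_m\,\hat c^{(m)}_k\,f_m\Big),\]
which by conditions \ref{cond5}, \ref{cond6} and \ref{cond7} is a well-defined positive contraction of $M/A$.

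Next I would verify that $\norm{P_n(\overline b)}=r_n$ for each $n$. Since every $P_n$ is linear in its variables and, by condition \ref{cond1}, each $f_m$ approximately commutes with the parameters of $P_n$, I would push the $f_m$ through $P_n$ (using $\sum_m f_m^2=1$ from condition \ref{cond5} for the constant term) to obtain $P_n(\overline b)=\pi(\sum_m f_m\,y^{(n)}_m\,f_m)$ up to an error controlled by the $\epsilon_m$. For the upper bound I would compute the quotient norm by compressing with $1-e_N$ via equation (\ref{limofnets}): condition \ref{condc} together with near-orthogonality annihilates the terms with $m<N$, while for $m\geq N$ condition \ref{cond2a} replaces $y^{(n)}_m$ by its compression $(1-e_{m-2})y^{(n)}_m(1-e_{m-2})$, whose norm is $\norm{\pi(y^{(n)}_m)}$ up to $\epsilon_m$ by condition \ref{cond0}, and the almost-orthogonality of the $f_m$ (conditions \ref{cond2} and \ref{cond3}) bounds the tail by $\sup_{m\geq N}\norm{\pi(y^{(n)}_m)}\to r_n$. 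For the lower bound I would use approximate commutation once more to pass from $\sum_m f_m y^{(n)}_m f_m$ to $\sum_m y^{(n)}_m f_m^2$ and then invoke condition \ref{cond8} to get $\norm{\pi(\sum_m y^{(n)}_m f_m^2)}\geq\limsup_m\norm{y^{(n)}_m f_m^2}=r_n$.

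The hard part will be this final norm estimate, and specifically the lower bound. To apply condition \ref{cond8} I must guarantee its hypothesis $\limsup_m\norm{y^{(n)}_m}=\limsup_m\norm{y^{(n)}_m f_m^2}$, i.e.\ that compressing by $f_m^2$ does not decrease the relevant norm; this is exactly where conditions \ref{cond0} and \ref{cond2a} are needed, since they say that the quotient norm of $y^{(n)}_m$ is already attained on the part of the representation where $f_m^2\approx1$. The accompanying difficulty is one of bookkeeping: I must choose near-optimal positive lifts simultaneously for all the finitely many polynomials active at each stage, and arrange the exhaustion $(F_n)$ and the sequence $(\epsilon_n)$ so that the accumulated commutator and orthogonality errors remain summable, ensuring that both the upper and lower bounds close to exactly $r_n$ while the realized tuple stays positive.
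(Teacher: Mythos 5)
Your overall architecture is the same as the paper's: reduce to a concrete countable type with singleton values, lift the partial solutions to $M$, feed the lifted data into Lemma \ref{thefns}, glue via $b_k=\pi\left(\sum_m f_m\hat c^{(m)}_k f_m\right)$, and verify each condition by an upper and a lower norm estimate. Your preliminary reduction to positive types via Lemma \ref{positivepartial} is not in the paper's proof (which works directly with arbitrary contractions), but it is a legitimate variation, and it has the pleasant side effect that the realized tuple is automatically positive; your upper bound and the commutation bookkeeping are essentially the paper's argument.

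The gap is in the lower bound, exactly at the step you flag as the hard part. You propose to apply condition \ref{cond8} to the sequence $(y^{(n)}_m)$ of lifts, whose hypothesis is $\limsup_m\norm{y^{(n)}_m}=\limsup_m\norm{y^{(n)}_m f_m^2}$. This can simply fail: $y^{(n)}_m=Q_n(\hat c^{(m)})$ is determined by your chosen positive lifts of the coefficients and variables, and its norm in $M$ can exceed $\norm{\pi(y^{(n)}_m)}$ by a fixed amount for every $m$ (already for the single condition $\norm{x}=1/2$, a positive contractive lift can have norm $1$). Since $\limsup_m\norm{y^{(n)}_m f_m^2}=r_n$ while $\limsup_m\norm{y^{(n)}_m}$ may be strictly larger, condition \ref{cond8} does not apply; and conditions \ref{cond0} and \ref{cond2a} cannot repair this, because they control the compressions $(1-e_{m-2})y^{(n)}_m(1-e_{m-2})$, not the full norm of $y^{(n)}_m$ itself. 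This is precisely why the paper introduces $a_m=(1-e_{m-2})Q_n(\overline x_{n,m})(1-e_{m-2})$ and applies condition \ref{cond8} to $(a_m)$ instead: by condition \ref{cond0} one has $\limsup_m\norm{a_m}=r_n$ on the nose, and the hypothesis $\limsup_m\norm{a_m}=\limsup_m\norm{a_mf_m^2}$ is then verified using conditions \ref{cond2a}, \ref{cond1} and the lower estimate of condition \ref{cond4} (a condition your outline never invokes, although your claim $\limsup_m\norm{y^{(n)}_m f_m^2}=r_n$ already needs it). Your argument is repaired by replacing $y^{(n)}_m$ with $a_m$ throughout the lower bound, i.e.\ by running the paper's compression argument.
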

\begin{proof}
Let $\pi : M \to M/A$ be the quotient map.  Let $(P_n(\overline x))_{n \in \en}$ be a collection of $^*$-polynomial of degree $1$ with coefficients in $M/A$, and for each $n \in \en$ let $r_n\in\er^+$.  Without loss of generality, reordering the polynomials and eventually adding redundancy if necessary, we can suppose that the only variables occurring in $P_n$ are $x_0, \ldots, x_n$.

Suppose that the set of conditions $\set{\norm{P_n(x_0,\ldots,x_n)} = r_n : n \in \en}$ is approximately finitely satisfiable, in the sense of Definition \ref{cd1s}.  As we noted immediately after Definition \ref{cd1s}, it is sufficient to assume that the partial solutions are all in $(M/A)_{\leq 1}$, and we must find a total solution also in $(M/A)_{\leq 1}$.  So we have partial solutions
\[\{\pi(x_{k,i})\}_{k\leq i}\subseteq (M/A)_{\leq1}\]
such that for all $i\in\en$ and $n\leq i$ we have
\[\norm{P_n(\pi(x_{0,i}),\ldots,\pi(x_{n,i}))}\in (r_n)_{1/i}.\] 

For each $n \in \en$, let $Q_n(x_0,\ldots,x_n)$ be a polynomial whose coefficients are liftings of the coefficients of $P_n$ to $M$, and let $F_n$ be a finite set that contains
\begin{itemize}
\item all the coefficients of $Q_k$, for $k\leq n$
\item $x_{k,i}, x_{k,i}^*$ for $k\leq i\leq n$.
\item $Q_k(x_{0,i},\ldots,x_{k,i})$ for $k\leq i\leq n$.
\end{itemize}
Let $\epsilon_n = 4^{-n}$.  Find sequences $(e_n)_{n \in \en}$ and $(f_n)_{n \in \en}$ satisfying the conclusion of Lemma \ref{thefns} for these choices of $(F_n)_{n \in \en}$ and $(\epsilon_n)_{n \in \en}$.

Let $\overline x_{n,i}=(x_{0,i},\ldots,x_{n,i})$, $y_k=\sum_{i\geq k} f_ix_{k,i}f_i$, $\overline y_n=(y_0,\ldots,y_n)$ and $\overline z_n=\pi(\overline y_n)$.  Fix $n \in \en$; we will prove that $\norm{P_n(\overline z_n)}=r_n$.

First, since $x_{k,i}\in M_{\leq 1}$, as a consequence of condition \ref{cond7} of Lemma \ref{thefns}, we have that $y_i\in M_{\leq 1}$   for all $i$. 
 Moreover, since $Q_n$ is a polynomial whose coefficients are lifting of those of $P_n$ we have 
\[\norm{P_n(\overline z_n)}=\norm{\pi(Q_n(\overline y_n))}.\]
We claim that 
\[Q_n(\overline y_n)-\sum_{j \in \en} f_jQ_n(\overline x_{n,j})f_j\in A.\] 
It is enough to show that 
\[\sum_{j \in \en} f_jax_{k,j}bf_j-\sum_{j \in \en} af_jx_{k,j}f_jb\in A,\] 
where $a,b$ are coefficients of a monomial in $Q_n$,  since $Q_n$ is the sum of finitely many of these elements (and the proof for monomials of the form $ax_{k,j}^*b$ is essentially the same as the one for $ax_{k, j}b$). 

By construction we have $a,b\in F_n$, and hence by condition \ref{cond1} of Lemma \ref{thefns}, for $j$ sufficiently large, 
\[\forall x\in M_{\leq 1}\,(\norm{af_jxf_jb-f_jaxbf_j}\leq 2^{-j}(\norm{a}+\norm{b})).\] 
Therefore $\sum_{j \in \en} (f_jax_{k,j}bf_j-af_jx_{k,j}f_jb)$ is a series of elements in $A$ that is converging in norm, which implies that the claim is satisfied. In particular, 
\[\norm{P_n(\overline z_n)}=\norm{\pi\left(\sum_{j \in \en} f_jQ_n(\overline x_{n,j})f_j\right)}.\]
For each $j \geq 2$, let $a_{j}=(1-e_{j-2})Q_n(\overline x_{n,j})(1-e_{j-2})$. By condition \ref{cond0} of Lemma \ref{lemma1}, the fact that $Q_n(\overline x_{n,j})\in F_n$, and the original choice of the $x_{n, j}$'s, we have that $\limsup \norm{a_j}=r_n$.
Similarly to the above, but this time using condition \ref{cond2a} of Lemma \ref{lemma1}, we have
\[\norm{\pi\left(\sum_{j \in \en} f_jQ_n(\overline x_{n,j})f_j\right)}=\norm{\pi\left(\sum_{j \in \en} f_ja_jf_j\right)}\leq \norm{ \sum_{j \in \en} f_ja_jf_j}.\]

Using condition \ref{cond7} of Lemma \ref{thefns} and the fact that $Q_n(\overline x_{n,j})\in F_j$ we have that \[\norm{\sum_{j \in \en} f_ja_jf_j}\leq \limsup_{j\to\infty} \norm{a_j}=r_n.\]
Combining the calculations so far, we have shown
\[\norm{P_n(\overline z_n)} = \norm{\pi\left(\sum_{j\in\en}f_jQ_n(\overline{x}_{n, j})f_j\right)} = \norm{\pi\left(\sum_{j \in \en}f_ja_jf_j\right)} \leq r_n.\]
Since $Q_n(\overline{x}_{n, j}) \in F_j$ for all $j$, condition \ref{cond4} of Lemma \ref{lemma1} implies
\[r_n \leq \limsup_{j\to\infty}\norm{f_jQ_n(\overline{x}_{n,j})f_j}.\]
It now remains to prove that
\[\limsup_{j\to\infty} \norm{f_ja_jf_j}\leq\norm{\pi\left(\sum_{j\in\en} f_ja_jf_j\right)}\]
so that we will have
\begin{align*}
r_n &\leq \limsup_{j\to\infty} \norm{ f_jQ_n(\overline x_{n,j})f_j} \\
 &=\limsup_{j\to\infty} \norm{f_ja_jf_j}\\
 &\leq \norm{\pi\left(\sum_{j\in\en} f_ja_jf_j\right)} \\
 &= \norm{P_n(\overline{z_n})}.
\end{align*}
We have $Q_n(\overline x_{n,j})\in F_j$, so by condition \ref{cond1} of Lemma \ref{lemma1}, we have that 
\[\limsup_{j\to\infty} \norm{f_ja_jf_j}=\limsup_{j\to\infty} \norm{a_jf_j^2},\]
and hence
\[\sum_{j\in\en} f_ja_jf_j-\sum_{j\in\en} a_jf_j^2\in A.\]
The final required claim will then follow by condition \ref{cond8} of Lemma \ref{lemma1}, once we verify \[\limsup_{j\to\infty}\norm{a_jf_j^2} = \limsup_{j\to\infty}\norm{a_j}.\]
We clearly have that for all $j$,
\[\norm{a_jf_j^2} \leq \norm{a_j}.\]
On the other hand, 
\begin{align*}
\limsup_{j \to \infty}\norm{a_jf_j^2} &= \limsup_{j\to\infty}\norm{f_ja_jf_j} \\
 &= \limsup_{j\to\infty}\norm{f_jQ_n(\overline{x}_{n, j})f_j} &\text{by condition \ref{cond2a}} \\
 &\geq r_n \\
 &= \limsup_{j\to\infty} \norm{a_j}.
\end{align*}
\end{proof}

The theorem above applies, in particular, to coronas of $\sigma$-unital algebras.  The following result is due to Farah and Hart, but unfortunately their proof in \cite{farah2011countable} has a technical error.  Specifically, our proof of Theorem \ref{thm:FactorCtbleSat} uses the same strategy as in \cite[Theorem 1.4]{farah2011countable}, but avoids their equation (10) on p. 14, which is incorrect.

\begin{Cor}[{\cite[Theorem 1.4]{farah2011countable}}]\label{teo:farahhartsigmaunital}
If $A$ is a $\sigma$-unital C*-algebra, then its corona $C(A)$ is countably degree-$1$ saturated.
\end{Cor}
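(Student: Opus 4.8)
The plan is to apply Theorem \ref{thm:FactorCtbleSat} with $M = M(A)$, the multiplier algebra of $A$, since by definition the corona $C(A)$ is precisely the quotient $M(A)/A$. It then suffices to verify that the pair $(M(A), A)$ meets the three hypotheses of that theorem: that $M(A)$ is a unital C*-algebra, that $A$ sits inside it as an essential ideal, that there is an increasing sequence of positive elements of $A$ with supremum $1_{M(A)}$, and finally that increasing uniformly bounded sequences converge in $M(A)$.

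The first two requirements are standard facts about multiplier algebras and can be quoted directly: $M(A)$ is always unital, and $A$ is always an essential ideal of $M(A)$, its annihilator in $M(A)$ being trivial. For the required approximate unit I would invoke $\sigma$-unitality: a countable approximate identity for $A$ may be chosen increasing and positive, yielding a sequence $(g_n)_{n \in \en} \subseteq A^+$. Since an increasing approximate identity converges strictly to $1_{M(A)}$, a short order-theoretic argument (if $h \in M(A)$ dominates every $g_n$ then $a^{1/2} h a^{1/2} \geq a$ for all $a \in A^+$, forcing $h \geq 1$) shows that $1_{M(A)} = \sup_n g_n$, exactly as needed.

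The main obstacle is the last hypothesis, the convergence of increasing uniformly bounded sequences, and this is where $\sigma$-unitality is genuinely used and where care is required. One cannot simply cite monotone completeness of $M(A)$, because $M(A)$ need not be monotone complete: already $M(C_0(X)) = C_b(X)$ admits bounded increasing sequences with no least upper bound when $X$ is noncompact. The point is rather that $M(A)$ is complete in the strict topology, and the only increasing sequences actually produced in the proof of Theorem \ref{thm:FactorCtbleSat} (through Lemma \ref{thefns}) are the partition-of-unity weighted sums $\sum_{i \leq n} f_i x_i f_i$, built from elements $f_i$ with $\sum_i f_i^2 = 1_{M(A)}$. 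For such sequences the tails $\sum_{i > n} f_i^2$ tend strictly to $0$ because $g_n \to 1$ strictly, so the partial sums are strictly Cauchy and converge to an element of $M(A)$ by strict completeness. I expect verifying that these specific monotone limits are realized in $M(A)$ to be the heart of the matter; once it is in place, the conclusion of Theorem \ref{thm:FactorCtbleSat} yields that $C(A) = M(A)/A$ is countably degree-$1$ saturated, recovering the theorem of Farah and Hart.
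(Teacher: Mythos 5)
Your proposal is correct, and at the top level it is the same proof as the paper's: Corollary \ref{teo:farahhartsigmaunital} is obtained there by applying Theorem \ref{thm:FactorCtbleSat} to $M = M(A)$ with $A$ as the essential ideal, exactly as you do, with the first three hypotheses handled by standard multiplier-algebra facts and an increasing positive countable approximate identity whose supremum is $1_{M(A)}$. The genuine difference lies in your treatment of the fourth hypothesis, and here your caution improves on the paper, which deals with it by a single assertion (in the remark preceding Lemma \ref{thefns}) that for $\sigma$-unital $A$ the pair $M(A)$, $A$ ``satisfy the hypothesis of our lemma.'' You are right to distrust this: under any literal reading of ``every increasing uniformly bounded sequence converges in $M$'' the assertion fails, since for $A = C_0(\mathbb{R})$ a sequence of functions increasing pointwise to the characteristic function of an open set such as $\bigcup_{n}\bigl(\tfrac{1}{2n+1},\tfrac{1}{2n}\bigr)$ is bounded and increasing in $C_b(\mathbb{R}) = M(C_0(\mathbb{R}))$ but has neither a least upper bound there nor a strict limit. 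Your repair --- that the proof only ever takes monotone limits of the sums $\sum_{i \le n} f_i x_i f_i$, whose tails are dominated by $1 - e_n \le 1 - g_n \to 0$ strictly, so that the partial sums are strictly Cauchy and converge in $M(A)$ by completeness of norm-bounded sets in the strict topology --- is exactly the right one, and is in effect the mechanism of the original Farah--Hart argument. What your version buys is a verification that actually survives scrutiny, at the cost of reopening the proof of Lemma \ref{thefns} rather than quoting Theorem \ref{thm:FactorCtbleSat} as a black box; what the paper buys is brevity, at the cost of a hypothesis check that, as stated, is not true. Two small caveats: your parenthetical ``when $X$ is noncompact'' overstates the counterexample, since $C_b(\mathbb{N}) = \ell^\infty(\mathbb{N})$ is monotone complete (a connected noncompact space such as $\mathbb{R}$ is what you want); and the same strict-topology care is needed for conditions \ref{cond7} and \ref{cond8} of Lemma \ref{thefns} (for \ref{cond7}, via strict lower semicontinuity of the norm), not only for the convergence condition \ref{cond6} on which your proposal concentrates.
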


We also obtain countable degree-$1$ saturation for the motivating example from the beginning of this section.

\begin{Cor}
Let $N$ be a II$_1$ factor, $H$ a separable Hilbert space and $M=N\,\overline\otimes\,\mathcal B(H)$ be the associated II$_\infty$ factor. Let $\mathcal K_M$ be the unique two-sided closed ideal of $M$, that is the closure of the elements of finite trace.  Then $M/\mc{K}_M$ is countably degree-$1$ saturated.  In particular, this is the case when $N = \mc{R}$, the hyperfinite II$_{1}$ factor.
\end{Cor}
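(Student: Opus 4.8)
The plan is simply to verify the four hypotheses of Theorem \ref{thm:FactorCtbleSat} for the pair $M$ and $A = \mc{K}_M$, after which the conclusion is immediate. Since $M = N\,\overline{\otimes}\,\mc{B}(H)$ is a II$_\infty$ factor, it is in particular a unital von Neumann algebra, so the requirement that $M$ be unital holds automatically. Moreover, being a von Neumann algebra, $M$ has the property that every bounded increasing net of self-adjoint elements has a supremum in $M$ and converges to it in the strong (hence weak) operator topology; in particular any increasing uniformly bounded sequence converges in $M$. This is exactly the mode of convergence used throughout the proof of Lemma \ref{thefns}, so the convergence hypothesis is satisfied.

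Next I would produce the required increasing sequence of positive elements of $A$ with supremum $1_M$. Fixing an orthonormal basis $(e_k)_{k\in\en}$ of $H$, let $P_n\in\mc{B}(H)$ be the projection onto $\spann\set{e_1,\dots,e_n}$ and set $g_n = 1_N\otimes P_n$. Each $g_n$ is a projection of finite trace $\tau(g_n)=n$ (using $\tau_N(1_N)=1$), so $g_n\in\mc{K}_M$; the sequence $(g_n)_{n\in\en}$ is increasing with supremum $1_N\otimes 1_H = 1_M$, as required. The one genuinely substantive point is the essentiality of $\mc{K}_M$: suppose $x\in M$ satisfies $\mc{K}_M x = \set{0}$. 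Then $g_n x = 0$ for every $n$, and since $g_n\to 1_M$ in the strong operator topology we obtain $x = \lim_n g_n x = 0$. Hence the annihilator of $\mc{K}_M$ is trivial and $\mc{K}_M$ is an essential ideal of $M$.

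With all four hypotheses verified, Theorem \ref{thm:FactorCtbleSat} yields that $M/\mc{K}_M$ is countably degree-$1$ saturated, and the final assertion follows by specializing to $N=\mc{R}$. The only mildly delicate step is the essentiality of $\mc{K}_M$, which I expect to be the main (though still routine) obstacle; it rests on the fact that the finite projections of the II$_\infty$ factor have supremum $1_M$, and everything else is a direct check of the hypotheses against the von Neumann algebra structure of $M$.
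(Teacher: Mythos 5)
Your proposal is correct and follows exactly the route the paper intends: the corollary is stated as a direct application of Theorem \ref{thm:FactorCtbleSat}, and your verification of the hypotheses (unitality, essentiality of $\mc{K}_M$ via the finite-trace projections $1_N\otimes P_n$ converging strongly to $1_M$, the same projections witnessing the increasing sequence with supremum $1_M$, and WOT-convergence of bounded increasing sequences in a von Neumann algebra) is precisely the routine check the paper leaves implicit. No gaps.
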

\noindent
More generally, recall that a von Neumann algebra $M$ is \emph{finite} if there is not a projection that is Murray-von Neumann equivalent to $1_M$, and $\sigma$-\emph{finite} if there is a sequence of finite projections weakly converging to $1_M$.
\begin{Cor}
Let $M$ be a $\sigma$-finite but not finite tracial von Neumann algebra, and let $A$ be the ideal generated by the finite trace projections. Then $M/A$ is countably degree-$1$ saturated.
\end{Cor}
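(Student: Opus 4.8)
The plan is to deduce the corollary directly from Theorem~\ref{thm:FactorCtbleSat} by verifying that $M$ and its ideal $A$ satisfy the hypotheses of that theorem. Since a von Neumann algebra is in particular a unital C*-algebra and $A$ is by construction a norm-closed two-sided ideal, three things remain to be checked: that every increasing uniformly bounded sequence in $M$ converges, that $A$ is essential, and that there is an increasing sequence of positive elements of $A$ with supremum $1_M$. With these in hand the theorem applies without modification.

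The convergence hypothesis is the most structural but entirely standard, so I would settle it first: every von Neumann algebra is monotone complete, hence any increasing uniformly bounded sequence of self-adjoint elements has a least upper bound in $M$ and converges to it in the strong operator topology. The same monotone-convergence mechanism underlies the other two verifications.

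Next I would construct the approximate unit. Let $\tau$ be the faithful normal semifinite trace on $M$. By semifiniteness the projections of finite trace have supremum $1_M$, and, since $M$ is $\sigma$-finite, it carries a faithful normal state $\phi$; using $\phi$ I would extract from these projections a countable increasing sequence $(g_n)$ of finite-trace projections with $\phi(g_n) \to 1$, where the $g_n$ are obtained as finite joins (which remain of finite trace because $\tau(p \vee q) \le \tau(p) + \tau(q)$) and hence lie in $A$. Normality and faithfulness of $\phi$ then force the increasing projections $g_n$ to have order-supremum $1_M$, giving the required sequence. Essentiality follows at once: if $x \in M$ satisfies $Ax = \{0\}$ then $g_n x = 0$ for all $n$, so passing to the strong-operator limit $g_n \nearrow 1_M$ yields $x = 0$, and the left annihilator is handled symmetrically using that $A$ is self-adjoint.

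Since this is a corollary of Theorem~\ref{thm:FactorCtbleSat}, no genuinely new difficulty arises; the only point demanding care --- and the step I would treat as the main obstacle --- is the reconciliation between the \emph{finite} projections appearing in the definition of $\sigma$-finiteness and the \emph{finite-trace} projections that generate $A$, together with the extraction of a countable increasing family with supremum $1_M$. Once it is checked that $\sigma$-finiteness and semifiniteness of $\tau$ genuinely produce such finite-trace projections inside $A$, Theorem~\ref{thm:FactorCtbleSat} immediately gives that $M/A$ is countably degree-$1$ saturated.
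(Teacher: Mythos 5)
Your strategy coincides with the paper's: the corollary is stated there as a direct consequence of Theorem \ref{thm:FactorCtbleSat} with no further argument, so the content is exactly the verification of its three hypotheses, and your treatment of monotone completeness and of essentiality is correct.

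The one genuine point of divergence is where the increasing sequence in $A$ comes from, and here you need to be careful about which definition of $\sigma$-finiteness you are using. You invoke the standard one (existence of a faithful normal state), but the paper defines $\sigma$-finite as: there is a sequence of finite projections weakly converging to $1_M$. Read with finite-\emph{trace} projections (the intended reading, and automatic in a factor), this hands you the sequence at once: if $(q_n)$ are finite-trace projections with $q_n \to 1_M$ weakly, put $g_n = q_1 \vee \cdots \vee q_n$; each $g_n$ lies in $A$ since $\tau(p \vee q) \leq \tau(p) + \tau(q)$, and $\sup_n g_n = 1_M$ because $\langle g_n \xi, \xi \rangle \geq \langle q_n \xi, \xi \rangle \to \norm{\xi}^2$. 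No faithful normal state is needed; alternatively, your route is recovered from this reading because $\sum_n 2^{-n}(1+\tau(q_n))^{-1}\tau(q_n \cdot q_n)$, suitably normalized, is a faithful normal state. By contrast, the paper's definition taken literally, with finite rather than finite-trace projections, supports neither your argument nor the hypotheses of Theorem \ref{thm:FactorCtbleSat}: for uncountable $I$ the algebra $\bigoplus_{i \in I} \mathcal{B}(\ell^2)$, with trace $\sum_i \operatorname{Tr}_i$, has finite projections converging weakly to $1$ (take rank-$n$ projections in every summand), yet it admits no faithful normal state, and every element of the ideal generated by its finite-trace projections is supported on countably many summands, so no countable increasing family in $A$ can have supremum $1_M$. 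So the finite versus finite-trace reconciliation that you yourself flag as the main obstacle is indeed the crux, but it should be resolved by working with finite-trace projections directly, rather than by appealing to the standard form of $\sigma$-finiteness, which the paper's hypothesis does not literally provide.
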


\section{Generalized Calkin algebras}\label{section:Calkin2}
\begin{Notation}Let $\alpha$ be an ordinal and $H_\alpha = \ell^2(\aleph_\alpha)$ be the unique (up to isomorphism) Hilbert space of density character $\aleph_\alpha$. Let $\mathcal B_\alpha=\mathcal B(H_{\alpha})$. Let $\mathcal K_\alpha$ be the ideal of compact operators in $\mathcal B_\alpha$. The quotient $\mathcal C_\alpha=\mathcal B_\alpha/\mathcal K_\alpha$ is called the \defined{generalized Calkin algebra of weight $\aleph_\alpha$}.
\end{Notation}

Note that, when $H$ is separable, the ideal of compact operators in $\mathcal B(H)$ is separable, and in particular $\sigma$-unital, so it follows from Corollary \ref{teo:farahhartsigmaunital} that the Calkin algebra is countably degree-$1$ saturated.  

We are going to give explicit results on the theories of the generalized Calkin algebras. Is it known (see \cite{farah2011countable}) that the Calkin algebra is not countably quantifier-free saturated; we show that the generalized Calkin algebras also fail to have this degree of saturation.  This follows immediately from the fact that the Calkin algebra is isomorphic to a corner of the generalized Calkin algebra and that if $A$ is a C*-algebra that is $\Phi$-saturated, where $\Phi$ include all $^*$-polynomials of degree $1$, then every corner of $A$ is $\Phi$-saturated. On the other hand, the proof shown below is direct and much easier than the proof in the separable case.  It is worth noting, however, that the method we will use does not apply to the Calkin algebra $\mathcal C_{0}$ itself.
\begin{Lemma}
Let $\alpha \geq 1$ be an ordinal. Then $\mathcal C_\alpha$ is not countably quantifier-free saturated.
\end{Lemma}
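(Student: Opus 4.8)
The plan is to deduce the failure of quantifier-free saturation for $\mathcal{C}_\alpha$ from the corresponding fact for the separable Calkin algebra $\mathcal{C}_0$, recorded above as known (see \cite{farah2011countable}), together with the hereditary behaviour of saturation under corners. First I would exhibit $\mathcal{C}_0$ as a corner of $\mathcal{C}_\alpha$. Since $\alpha\geq 1$, the space $H_\alpha$ contains an infinite-dimensional separable subspace $K$; let $P\in\mathcal{B}_\alpha$ be the (infinite-rank) orthogonal projection onto $K$ and put $e=\pi(P)$, a nonzero projection in $\mathcal{C}_\alpha$. The compression $P\mathcal{B}_\alpha P$ is canonically $\ast$-isomorphic to $\mathcal{B}(K)\cong\mathcal{B}(\ell^2)$, and under this identification $P\mathcal{K}_\alpha P=\mathcal{K}(K)$, because a compression of a compact operator is compact. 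Hence the corner $e\mathcal{C}_\alpha e=\pi(P)\,\mathcal{C}_\alpha\,\pi(P)$ is $\ast$-isomorphic to $\mathcal{B}(K)/\mathcal{K}(K)=\mathcal{C}_0$.

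With this identification in hand, the argument is immediate: quantifier-free saturation passes to corners, since the class of quantifier-free formulas contains all degree-$1$ $\ast$-polynomials and the corner inheritance stated earlier applies to any $\Phi$ containing these. Thus if $\mathcal{C}_\alpha$ were countably quantifier-free saturated, so would be its corner $e\mathcal{C}_\alpha e\cong\mathcal{C}_0$, contradicting the known failure of quantifier-free saturation for the separable Calkin algebra. This already proves the lemma.

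The only input from the separable theory is the nontrivial fact that $\mathcal{C}_0$ is not quantifier-free saturated, whose proof is delicate. It is therefore worth seeking a self-contained argument that uses the largeness of $\aleph_\alpha$ directly, and this is where I expect the real work to lie. The natural attempt is to produce, from a family $(p_\xi)_{\xi<\omega_1}$ of pairwise orthogonal infinite-rank projections in $\mathcal{B}_\alpha$ (available precisely because $\aleph_\alpha$ is uncountable), an explicit countable type of degree at most $2$, built from projection, orthogonality, and equivalence conditions on a countable subfamily $\{\pi(p_n)\}$ together with a parameter coding the uncountable ``remainder'' $\pi\big(\sum_{\xi\geq\omega}p_\xi\big)$, that is finitely satisfiable by finite partial sums yet has no realization.

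The main obstacle, and the reason such a direct argument is subtle, is establishing non-realizability: the naive orthogonality and domination types are all satisfied either by a \emph{diagonal} operator (picking one direction from each block) or by the strong-operator supremum of an increasing sequence of projections, both of which descend to genuine elements of $\mathcal{C}_\alpha$. Any workable type must be engineered so that these diagonalization and supremum constructions are blocked, and it is exactly this blocking step that must exploit the uncountability of $\aleph_\alpha$ and that fails for $\mathcal{C}_0$. I would therefore spend the bulk of the effort designing the type so that a hypothetical realization, lifted to $\mathcal{B}_\alpha$ and analysed modulo $\mathcal{K}_\alpha$, forces a contradiction between being compact on each of countably many blocks and interacting nontrivially with the uncountable remainder.
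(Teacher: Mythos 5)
Your corner argument is correct, but it is not the route the paper's proof takes --- it is precisely the ``immediate'' argument that the authors mention in the paragraph preceding the lemma and then deliberately set aside. The paper's actual proof is a direct, self-contained construction: partition $\aleph_\alpha$ into countably many pieces $A_n$ each of size $\aleph_\alpha$, let $P_n$ be the projection onto $\overline{\spann}(e_\beta : \beta \in A_n)$, and consider the quantifier-free type in two variables given by $\norm{xx^*-1}=0$, $\norm{x^*x-y}=0$, $\norm{y-y^*}=0$, $\norm{y-y^2}=0$, and $\norm{y\,\pi(P_n)}=0$ for all $n$. It is finitely satisfiable by tail projections (the projection onto the span of the $e_\beta$ with $\beta\in\bigcup_{n>N}A_n$ still has range of density $\aleph_\alpha$, so it is Murray--von Neumann equivalent to $1$), while a realization would lift to a projection $Q\in\mathcal B_\alpha$ with $QP_n\in\mathcal K_\alpha$ for every $n$; the key claim is that any such $Q$ has range of countable density, which makes $\pi(Q)$ impossible to be equivalent to $1$ when $\alpha\geq 1$. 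Comparing the two: your reduction is shorter but imports the nontrivial external fact that $\mathcal C_0$ fails quantifier-free saturation, whose known proof goes through $\operatorname{Ext}$/BDF theory; the paper's construction avoids that machinery entirely, is elementary, and isolates exactly where uncountability enters (indeed, as the authors note, their method does not apply to $\mathcal C_0$, whereas your argument depends on $\mathcal C_0$). Your argument, on the other hand, generalizes mechanically to show that failure of any corner-inherited degree of saturation passes upward.

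One further remark: the direct argument you sketch in your last two paragraphs, and correctly identify as the harder part, is completed by exactly the device you were missing. The way to ``block'' the diagonal and supremum realizations is not a more elaborate orthogonality pattern but the equivalence conditions $xx^*=1$, $x^*x=y$: every projection in $\mathcal C_\alpha$ orthogonal to all the $\pi(P_n)$ lifts to a projection of countable-density range, and the requirement that it be equivalent to the identity rules out all such elements at once.
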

\begin{proof}
Fix $\{A_n\}_{n\in\en}$ a countable partition of $\aleph_\alpha$ in disjoint pieces of size $\aleph_\alpha$ and a base $(e_\beta)_{\beta < \aleph_\alpha}$ for $H_{\aleph_\alpha}$.  For each $n \in \en$ let $P_n$ be the projection onto $\overline{\spann(e_\beta\colon\beta\in A_n)}$.

\begin{claim} If $Q$ is a projection in $\mathcal B_\alpha$ such that $QP_n\in\mathcal K_\alpha$ for all $n$ then $Q$ has range of countable density.
\end{claim}
\begin{proof}
We have that for any $n\in\en$ and $\epsilon>0$ there is a finite $C_{\epsilon,n}\subseteq\aleph_\alpha$ such that
\[\beta\notin C_{\epsilon,n}\Rightarrow \norm{QP_ne_\beta}<\epsilon.\]

Let $D=\bigcup_{n\in\en}\bigcup_{m\in\en} C_{1/m,n}$. If $\beta\notin D$ then for all $n\in\en$ we have $\norm{QP_ne_\beta}=0$ and since there is $n$ such that $e_\beta\in P_n$, we have that $\norm{Qe_\beta}=0$. Since $D$ is countable, $Q$ is identically zero on a subspace of countable codimension.
\end{proof}
Let $Q_{-4}=xx^*-1$, $Q_{-3}=x^*x-y$,  $Q_{-2}=y-y^*$, $Q_{-1}=y-y^2$, and $Q_n=yP_n$. The type $\{\norm{Q_{i}} = 0\}_{-4\leq i}$ admits a partial solution, but not a total solution.
\end{proof}
We are going to have a further look at the theories of $\mathcal C_\alpha$. In particular we want to see if it is possible to distinguish between the theories of $\mathcal C_\alpha$ and of $\mathcal C_\beta$, whenever $\alpha\neq\beta$. Of course, since there are at most $2^{\aleph_0}$ many possible theories, we have that there are ordinals $\alpha\neq\beta$ such that $\mathcal C_\alpha\equiv \mathcal C_\beta$.  As we show in the next theorem, this phenomenon cannot occur whenever $\alpha$ and $\beta$ are sufficiently small, and similarly for $\mc{B}_\alpha$ and $\mc{B}_\beta$.

\begin{teo}\label{teo:generalizedcalkin} 
Let $\alpha \neq \beta$ be ordinals, and $H_\alpha$ the Hilbert space of density $\aleph_\alpha$. Then the projections of the algebras $\mathcal C_\alpha$ and $\mc{C}_\beta$ as posets with respect to the Murray-von Neumann order are elementary equivalent if and only if $\alpha=\beta\mod\omega^\omega$,  where $\omega^\omega$ is computed by ordinal exponentiation, as they are the infinite projections of $\mathcal B_\alpha$ of $\mc{B}_\beta$. Consequently, if $\alpha\not\equiv\beta$ then $\mathcal B_\alpha\not\equiv \mathcal B_\beta$ and $\mathcal C_\alpha\not\equiv\mathcal C_\beta$.
\end{teo}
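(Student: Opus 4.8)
The plan is to reduce the statement to the elementary equivalence of two well-orders and then quote the classical analysis of the first-order theory of ordinals. First I would pin down the posets involved. In $\mathcal{B}_\alpha$ the Murray--von Neumann order on projections is \emph{total}, since any two cardinals $\le\aleph_\alpha$ are comparable, and $P\mapsto\dim\ran(P)$ induces an order isomorphism from the Murray--von Neumann classes of projections onto the cardinals $\le\aleph_\alpha$; restricting to the infinite projections yields the well-order $\{\aleph_0,\dots,\aleph_\alpha\}$ of order type $\alpha+1$. For $\mathcal{C}_\alpha$ I would use that projections lift to projections of $\mathcal{B}_\alpha$, that $\pi(P)=0$ iff $P$ has finite rank, and that for infinite-rank $P,P'$ one has $\pi(P)\sim\pi(P')$ iff $\dim\ran(P)=\dim\ran(P')$. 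The nontrivial direction lifts a partial isometry to $w\in\mathcal{B}_\alpha$ and observes that $w^{*}w$ and $ww^{*}$ have equal rank, while a compact perturbation of a projection, having separable range, has the same infinite rank as the projection. Thus the classes of $\mathcal{C}_\alpha$ form $\{0\}\cup\{\aleph_0,\dots,\aleph_\alpha\}$, which differs from $\alpha+1$ only by a definable least element; this simultaneously realizes the identification with the infinite projections of $\mathcal{B}_\alpha$ claimed in the statement.

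Next I would pass to ordinals. Each poset above is a well-order, so its theory as a poset coincides with its theory as a linear order, and the projection posets of $\mathcal{C}_\alpha,\mathcal{C}_\beta$ (equivalently the infinite-projection posets of $\mathcal{B}_\alpha,\mathcal{B}_\beta$) are elementarily equivalent iff $\alpha+1\equiv\beta+1$ as linear orders, the least element being definable and hence harmless. At this point I invoke the classical classification coming from the Ehrenfeucht--Fra\"iss\'e analysis of well-orders: for ordinals $\gamma,\delta$ one has $(\gamma,<)\equiv(\delta,<)$ iff $\gamma=\delta$, or $\gamma,\delta\ge\omega^\omega$ and $\gamma\equiv\delta\pmod{\omega^\omega}$. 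A short computation with ordinal division shows $\alpha+1\ge\omega^\omega\Leftrightarrow\alpha\ge\omega^\omega$, and that $\alpha+1,\beta+1$ have equal remainders modulo $\omega^\omega$ exactly when $\alpha,\beta$ do; hence $\alpha+1\equiv\beta+1$ iff $\alpha\equiv\beta\pmod{\omega^\omega}$, which is the asserted equivalence.

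For the final clause I would argue that the poset of projections under the Murray--von Neumann order is uniformly interpretable, in continuous logic, in the C*-algebra: in $\mathcal{B}_\alpha$ and in $\mathcal{C}_\alpha$ the projections form a definable set (a near-projection is close to a genuine projection by functional calculus), and both ``$p$ is infinite'' and $p\preceq q$ are definable. Hence any elementary equivalence of the algebras descends to elementary equivalence of the interpreted posets; alternatively, by Keisler--Shelah (Theorem \ref{thm:KeislerShelah}) a $*$-isomorphism of ultrapowers restricts to an isomorphism of their projection posets, and the projection poset of an ultrapower is elementarily equivalent to that of the algebra. Taking contrapositives, if $\alpha\not\equiv\beta\pmod{\omega^\omega}$ the posets are not elementarily equivalent, so $\mathcal{B}_\alpha\not\equiv\mathcal{B}_\beta$ and $\mathcal{C}_\alpha\not\equiv\mathcal{C}_\beta$. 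The hard part will be the two model-theoretic bridges rather than the ordinal combinatorics, which I treat as a cited black box: the exact description of the projection poset of the generalized Calkin algebra---lifting projections and checking that Murray--von Neumann equivalence in the quotient is detected by the cardinal rank of a lift, using that a compact operator has separable range---and the interpretability step, namely verifying that the projections form a definable set so that elementary equivalence of the algebras genuinely passes to the posets.
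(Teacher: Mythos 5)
Your proposal is correct and takes essentially the same route as the paper: identify the Murray--von Neumann classes of (infinite) projections with the well-order of infinite cardinals up to $\aleph_\alpha$, quote the classical theorem that ordinals are elementarily equivalent exactly when they agree modulo $\omega^\omega$ (both being $\geq\omega^\omega$), and use definability of ``projection'', ``infinite projection'' and the Murray--von Neumann relation in continuous logic so that elementary equivalence of the algebras passes to the interpreted posets. If anything, your bookkeeping --- the order type $\alpha+1$, the extra definable least element in $\mathcal{C}_\alpha$, and the check that $\alpha+1\equiv\beta+1$ reduces to the congruence for $\alpha,\beta$ --- is more explicit than the paper's own argument.
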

\begin{proof}
The key fact is that $\alpha \equiv \beta$ (as first-order structures with only the ordering) if and only if $\alpha = \beta \mod\omega^{\omega}$; see \cite[Corollary 44]{elemequiv}.  Hence the proof will be complete as soon as we notice that the ordinal $\alpha$ is interpretable in both $\mc{C}_\alpha$ (as the set of projections under Murray-von Neumann equivalence) and inside $\mc{B}_\alpha$ (as the set of infinite projections under Murray-von Neumann equivalence).

To see this note that there is a formula $\phi$ such that $\phi(p,q)=0$ if $p\sim_{MvN}q$ and $p,q$ are projections and $\phi(p,q)=1$ otherwise, and that being an infinite projection is axiomatizable (since $p$ is an infinite projection if and only if $\psi(p)=0$ if and only if $\psi(p)<1/4$, where 
\[\psi(x)=\norm{x-x^*}+\norm{x-x^2}+\inf_y\left(\norm{yy^*-x}+\norm{y^*yx-y^*y} + (1 \dot{-} \norm{y^*y-x})\right)\] where $y$ ranges over the set of partial isometries. Since we have that to any projection we can associate the density of its range (both in $\mathcal C_\alpha$ and $\mathcal B_\alpha$), and that we have that $p\leq_{MvN}q$ if and only if the density of $p$ is less or equal than the range of $q$. Since every possible value for the density is of the form $\aleph_\beta$, for $\beta<\alpha$, the theorem is proved.
\end{proof}

\section{Abelian C*-algebras}\label{section:Topology}
In this section we consider abelian C*-algebras, and particularly the theories of real rank zero abelian C*-algebras. In the first part of this section we give a full classification of the complete theories of abelian real rank zero C*-algebras in terms of the (discrete first-order) theories of Boolean algebras (recall that a theory is \emph{complete} if whenever $M \models T$ and $N \models T$ then $M\equiv N$).  As an immediate consequence of this classification we find that there are exactly $\aleph_0$ distinct complete theories of abelian real rank zero C*-algebras.  We also give a concrete description of two of these complete theories.

In the second part of the section we return to studying saturation.  We show how saturation of abelian C*-algebras is related to the classical notion of saturation for Boolean algebras.  We begin by recalling some well-known definitions and properties.

\subsection{Preliminaries from topology and Boolean algebra}
\begin{Notation} A topological space $X$ is said \defined{sub-Stonean} if any pair of disjoint open $\sigma$-compact sets has disjoint closures; if, in addition, those closures are open and compact, $X$ is said \defined{Rickart}. A space $X$ is said to be \defined{totally disconnected} if the only connected components of $X$ are singletons and \defined{$0$-dimensional} if $X$ admits a basis of clopen sets.

A topological space $X$ such that every collection of disjoint nonempty open subsets of $X$ is countable is said to carry the \defined{countable chain condition}.
\end{Notation}  

Note that for a compact space being totally disconnected is the same as being $0$-dimensional, and this corresponds to the fact that $C(X)$ has real rank zero. Moreover any compact Rickart space is $0$-dimensional and sub-Stonean, while the converse is false (take for example $\beta\en\setminus\en$).  The space $X$ carries the countable chain condition if and only if $C(X)$ has few orthogonal positive elements (see Definition \ref{def:sccc}).

\begin{remark}
Let $X$ be a compact $0$-dimensional space, $CL(X)$ its algebra of clopen sets. For a Boolean algebra $B$, let $S(B)$ its Stone space, i.e., the space of all ultrafilters in $B$.

Note that if two $0$-dimensional spaces $X$ and $Y$ are homeomorphic then $CL(X)\cong CL(Y)$ and conversely, we have that $CL(X)\cong CL(Y)$ implies $X$ and $Y$ are homeomorphic to $S(CL(X))$.

Moreover, if $f\colon X\to Y$ is a continuous map of compact $0$-dimensional spaces we have that $\phi_f\colon CL(Y)\to CL(X)$ defined as $\phi_f(C)=f^{-1}[C]$ is an homomorphism of Boolean algebras.  Conversely, for any homomorphism of Boolean algebras $\phi\colon CL(Y)\to CL(X)$ we can define a continuous map $f_\phi\colon X\to Y$. If $f$ is injective, $\phi_f$ is surjective. If $f$ is onto $\phi_f$ is $1$-to-$1$ and same relations hold for $\phi$ and $f_\phi$.
\end{remark}

We recall some basic definitions and facts about Boolean algebras.
\begin{Notation}
A Boolean algebra is atomless if $\forall a\neq 0$ there is $b$ such that $0<b<a$. For $Y,Z\subset B$ we say that $Y<Z$ if $\forall (y,z)\in Y\times Z$ we have $y< z$.
\end{Notation}
Note that, for a $0$-dimensional space, $CL(X)$ is atomless if and only if $X$ does not have isolated points. In particular \[\abs{\{a\in CL(X)\colon a\text{ is an atom}\}}=\abs{\{x\in X\colon x\text{ is isolated}\}}.\]
\begin{defin}
Let $\kappa$ be an uncountable cardinal. A Boolean algebra $B$ is said to be \defined{$\kappa$-saturated} if every finitely satisfiable type of cardinality $<\kappa$ in the first-order language of Boolean algebras is satisfiable.
\end{defin}

For atomless Boolean algebras this model-theoretic saturation can be equivalently rephrased in terms of increasing and decreasing chains:

\begin{teo}[{\cite[Thm 2.7]{Mija79}}]\label{teo:satbooleanalgebra}
Let $B$ be an atomless Boolean algebra, and $\kappa$ an uncountable cardinal.  Then $B$ is $\kappa$-saturated if and only if for every directed $Y<Z$ such that $\abs{Y}+\abs{Z}<\kappa$ there is $c\in B$ such that $Y<c<Z$.
\end{teo}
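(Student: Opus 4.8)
The plan is to prove the two implications separately; the direction from saturation to cut-filling is routine, while the converse is where the theory of atomless Boolean algebras enters.

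For the forward direction, assume $B$ is $\kappa$-saturated and let $Y<Z$ be directed with $\abs Y+\abs Z<\kappa$. I would look at the $1$-type $p(x)=\set{y<x:y\in Y}\cup\set{x<z:z\in Z}$ over the parameter set $Y\cup Z$, which has cardinality $<\kappa$. To see $p$ is finitely satisfiable, given $y_1,\dots,y_m\in Y$ and $z_1,\dots,z_n\in Z$, upward directedness of $Y$ provides $y^*\in Y$ above all the $y_i$ and downward directedness of $Z$ provides $z^*\in Z$ below all the $z_j$; since $Y<Z$ we have $y^*<z^*$, so $z^*\wedge\neg y^*\neq 0$, and atomlessness gives $d$ with $0<d<z^*\wedge\neg y^*$. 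Then $c=y^*\vee d$ satisfies $y^*<c<z^*$ and hence the finite subtype. By $\kappa$-saturation $p$ is realized, and a realization is exactly an element $c$ with $Y<c<Z$.

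For the converse, assume the cut-filling condition. I would first invoke the standard reduction that $B$ is $\kappa$-saturated as soon as every $1$-type over a parameter set of size $<\kappa$ is realized, so it suffices to realize $1$-types. Since $B$ is an infinite atomless Boolean algebra its theory is the complete theory of atomless Boolean algebras, which admits quantifier elimination, so it is enough to realize quantifier-free $1$-types. Let $B_0$ be the subalgebra generated by the parameters; as $\kappa$ is uncountable, $\abs{B_0}<\kappa$. Every atomic formula in $x$ with parameters in $B_0$ is a Boolean combination of conditions of the form $b\le x$ and $b\wedge x=0$, so a complete quantifier-free $1$-type $p$ is coded by the ideal $J$ of those $b\in B_0$ for which $p$ implies $b\le x$ and the filter $U$ of those $b$ for which $p$ implies $x\le b$; the elements that $p$ declares properly split by $x$ are exactly those lying in neither $J$ nor the dual ideal $I=\set{\neg u:u\in U}$. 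Here $J$ is upward directed, $U$ is downward directed, and $J<U$.

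The crux is to realize $p$ as a single application of the hypothesis, but to families living in $B$ rather than in $B_0$. Merely filling the cut $J<c<U$ is not enough: such a $c$ need not split the elements that $p$ requires split --- for instance over $B_0=\langle a\rangle$ the type in which $x$ must split both $a$ and $\neg a$ has $J=\set{0}$ and $U=\set{1}$, yet $c=a$ fills $0<c<1$ without realizing the type. To repair this, for each $b$ that $p$ declares split I would use finite satisfiability of $p$ together with atomlessness to choose, coherently, elements $b'<b''<b$ of $B$, with $b'$ placed below the eventual $c$ and $b\wedge\neg b''$ placed disjoint from $c$, thereby pinning $c\wedge b$ strictly between $0$ and $b$. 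Performing this for all the relevant (fewer than $\kappa$) elements produces an upward directed $Y\subseteq B$ and a downward directed $Z\subseteq B$ with $J\subseteq Y$, $U\subseteq Z$, $Y<Z$ and $\abs Y+\abs Z<\kappa$, arranged so that any $c$ with $Y<c<Z$ contains $J$, is disjoint from $I$, and splits every split element, i.e.\ realizes $p$. Applying the hypothesis to $(Y,Z)$ then yields the desired $c$. I expect the main obstacle to be precisely this construction: the split-approximations must be selected so that $Y$ remains upward directed, $Z$ remains downward directed, and $Y<Z$ is preserved at every stage, which calls for a transfinite bookkeeping of length $<\kappa$ in which finite satisfiability guarantees that the finitely many active constraints are met in $B$ and atomlessness guarantees that any element can be strictly refined.
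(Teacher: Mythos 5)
First, note that the paper does not prove this theorem: it is quoted from \cite[Thm 2.7]{Mija79}, so your proposal can only be measured against the statement itself. Your forward direction is correct and complete: the cut determines a type of cardinality $\abs{Y}+\abs{Z}<\kappa$, directedness reduces finite satisfiability to a single pair $y^*<z^*$, and atomlessness produces an element strictly between them; saturation then realizes the type.

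The converse, however, has a genuine gap, located exactly at the point you yourself flag as ``the main obstacle.'' Your plan is to build the enlarged families $Y\supseteq J$ and $Z\supseteq U$ using only \emph{finite satisfiability of $p$ together with atomlessness}, and then to apply the cut-filling hypothesis a single time. This cannot work. Any splitting witness $s_b$ placed on the $Y$-side must lie below \emph{every} element of the $Z$-side, hence must be disjoint from every element of the ideal $I$ dual to $U$ (a set of size $<\kappa$, typically infinite) and from every witness $t_{b'}$ placed on the $Z$-side; dually for the $t_{b'}$. Producing a single nonzero element of $B$ below $b$ and disjoint from \emph{all} of $I$ is itself the realization of an infinite type (take, say, $I$ a nonprincipal maximal ideal of a countable $B_0$): finite satisfiability only supplies, for each finite $F\subseteq I$, an element disjoint from $F$, and atomlessness cannot upgrade this to a uniform witness. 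Consequently your constraint that ``the finitely many active constraints are met'' is false --- at every stage of the bookkeeping there are $<\kappa$ many, not finitely many, active constraints --- and after the construction the relation $Y<Z$ simply fails, so the one intended application of the hypothesis never gets off the ground. The repair is to invoke the cut-filling hypothesis itself, iteratively, to manufacture the witnesses. Two observations make this feasible: (i) any split $b$ with $b\le i\vee j$ for some $i\in I$, $j\in J$ is handled automatically by any $c$ with $J\le c$ and $c\wedge I=\set{0}$, since then $c\wedge b=j\wedge b$ and consistency of $p$ forces $0<j\wedge b<b$; (ii) for the remaining split $b$ the family $\set{b\wedge\neg i\wedge\neg j\colon i\in I,\ j\in J}$ is downward directed and consists of nonzero elements, so cut-filling over $\set{0}$ yields a nonzero $r_b$ below all of it, which atomlessness splits as $s_b\vee t_b$. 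Even with this, one must keep the $r_b$ for different $b$ pairwise disjoint and maintain the invariant that witnesses chosen at earlier stages do not cover the pieces $b'\wedge\neg i\wedge\neg j$ needed at later stages; this requires a further shrinking argument (again by cut-filling along decreasing chains of length $<\kappa$), not finite bookkeeping. In short, your skeleton --- quantifier elimination, coding a type by the cut $(J,U)$ plus the split elements, and the correct observation that filling $J<c<U$ alone is insufficient --- is right, but the witnesses cannot be produced by finite satisfiability and atomlessness; they require repeated use of the very hypothesis being assumed, together with a nontrivial invariant-preservation argument.
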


\subsection{Elementary equivalence}
\begin{Notation}
Let $\mc{U}$ be an ultrafilter (over a possibly uncountable index set).  If $A$ is a C*-algebra, we denote the C*-algebraic ultrapower of $A$ by $\mc{U}$ by $A^{\mc{U}}$.  Similarly, if $B$ is a Boolean algebra we denote the classical model-theoretic ultrapower of $B$ by $B^{\mc{U}}$.  If $X$ is a topological space we denote the ultracopower by $\sum_{\mc{U}}X$.
\end{Notation}

The reader unfamiliar with the ultracopower construction is referred to \cite{Bankston}.  The only use we will make of this tool is the following lemma.

\begin{Lemma}[{\cite[Proposition 2]{Gurevic} and \cite[Remark 2.5.1]{Bankston}}]\label{lem:Bankston}
Let $X$ be a compact Hausdorff space, and let $\mc{U}$ be an ultrafilter.  Then $C(X)^{\mc{U}} \cong C(\sum_{\mc{U}}X)$, and $CL(X)^{\mc{U}} \cong CL(\sum_{\mc{U}}X)$.
\end{Lemma}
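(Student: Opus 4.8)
The plan is to build both isomorphisms explicitly from the Stone--\v{C}ech description of the ultracopower, and then to obtain the Boolean statement from the C*-statement by passing to projections. Recall that $\sum_{\mc{U}}X$ is constructed as follows: form the topological sum $\bigsqcup_{i\in I}X$ of copies of $X$ indexed by the underlying set $I$ of $\mc{U}$, pass to its Stone--\v{C}ech compactification $\beta(\bigsqcup_{i\in I}X)$, and let $\sum_{\mc{U}}X$ be the fibre over $\mc{U}\in\beta I$ of the canonical map $\beta(\bigsqcup_{i\in I}X)\to\beta I$ induced by the index projection. A bounded family $(f_i)_{i\in I}$ of functions in $C(X)$ assembles into a single bounded continuous function on $\bigsqcup_{i\in I}X$, which extends uniquely to $\beta(\bigsqcup_{i\in I}X)$ and restricts to $\sum_{\mc{U}}X$; sending the ultrapower class $[(f_i)]$ to this restriction defines a map $\Phi\colon C(X)^{\mc{U}}\to C(\sum_{\mc{U}}X)$. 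Since all C*-operations and the involution are computed pointwise on $\bigsqcup_{i\in I}X$ and are preserved by the Stone--\v{C}ech extension, $\Phi$ is a $^*$-homomorphism.

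The heart of the argument is the identity
\[\sup_{\xi\in\sum_{\mc{U}}X}\abs{\tilde f(\xi)}=\lim_{\mc{U}}\norm{f_i},\]
where $\tilde f$ denotes the extension of the family $(f_i)$. For the inequality $\leq$ I would use that a point $\xi$ of the fibre lies in the closure of $\bigsqcup_{i\in U}X$ for every $U\in\mc{U}$, so $\abs{\tilde f(\xi)}\leq\inf_{U\in\mc{U}}\sup_{i\in U}\norm{f_i}=\lim_{\mc{U}}\norm{f_i}$; for $\geq$ I would choose $x_i\in X$ with $\abs{f_i(x_i)}$ close to $\norm{f_i}$ and take a limit point of $(x_i)$ in the fibre, where $\abs{\tilde f}$ attains the value $\lim_{\mc{U}}\abs{f_i(x_i)}$. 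This identity simultaneously shows that $\Phi$ is well defined on $\mc{U}$-classes (apply it to $f_i-g_i$) and that it is isometric, hence injective. Surjectivity follows because $\sum_{\mc{U}}X$ is closed in the compact, hence normal, space $\beta(\bigsqcup_{i\in I}X)$, so any $h\in C(\sum_{\mc{U}}X)$ extends to $\beta(\bigsqcup_{i\in I}X)$ and restricts to a bounded family over $I$ that $\Phi$ sends back to $h$. This establishes $C(X)^{\mc{U}}\cong C(\sum_{\mc{U}}X)$.

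For the Boolean statement I would transport everything through projections. For any compact Hausdorff $Y$ the assignment $C\mapsto\chi_C$ is a Boolean isomorphism from $CL(Y)$ onto the idempotents $\op{Proj}(C(Y))$, with intersection, union and complement corresponding to $pq$, $p+q-pq$ and $1-p$. Since $\Phi$ is a $^*$-isomorphism it carries $\op{Proj}(C(X)^{\mc{U}})$ onto $\op{Proj}(C(\sum_{\mc{U}}X))\cong CL(\sum_{\mc{U}}X)$, so it remains to identify $\op{Proj}(C(X)^{\mc{U}})$ with the Boolean ultrapower $CL(X)^{\mc{U}}$. A class $[(p_i)]$ is a projection exactly when $\lim_{\mc{U}}\norm{p_i^2-p_i}=\lim_{\mc{U}}\norm{p_i-p_i^*}=0$; replacing each $p_i$ by its self-adjoint part and applying the functional calculus $\chi_{[1/2,\infty)}$ produces, without changing the $\mc{U}$-class, a genuine projection $\chi_{C_i}$ with $C_i\in CL(X)$. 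The resulting map $CL(X)^{\mc{U}}\to\op{Proj}(C(X)^{\mc{U}})$, $[(C_i)]\mapsto[(\chi_{C_i})]$, is a surjective Boolean homomorphism, and it is injective because distinct clopen sets satisfy $\norm{\chi_C-\chi_D}=1$, so $\lim_{\mc{U}}\norm{\chi_{C_i}-\chi_{D_i}}=0$ forces $\set{i:C_i=D_i}\in\mc{U}$.

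I expect the norm identity of the second paragraph to be the main obstacle, since it is exactly the point where the metric (continuous-logic) ultrapower of $C(X)$ must be reconciled with the purely topological fibre construction; once it is in place, both isomorphisms are essentially formal. An alternative, more categorical route is to invoke Gelfand duality together with the fact that ultracopowers of compact Hausdorff spaces are dual to ultrapowers of commutative unital C*-algebras in continuous logic, but the explicit computation above is self-contained and yields the Boolean case with no extra machinery.
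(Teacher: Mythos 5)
Your proposal is correct, but there is nothing in the paper to compare it against: the paper does not prove this lemma at all, it quotes it from the literature (\cite[Proposition 2]{Gurevic} and \cite[Remark 2.5.1]{Bankston}). Your argument is essentially the standard proof behind those citations, and all the key steps check out: the norm identity $\sup_{\xi}\abs{\tilde f(\xi)}=\lim_{\mc{U}}\norm{f_i}$ is right, with the $\leq$ direction following from the fact that the fibre equals $\bigcap_{U\in\mc{U}}\overline{\bigsqcup_{i\in U}X}$, and the $\geq$ direction from evaluating $\tilde f$ at a cluster point along $\mc{U}$ of a norming section $(x_i)$ (this point lies in the fibre because the index projection composed with the section is the identity on $I$); surjectivity is Tietze applied to the closed fibre of the normal space $\beta(\bigsqcup_{i\in I}X)$; and the Boolean half is the observation that projections in the metric ultrapower are exactly $\mc{U}$-classes of approximate projections, which is the weak stability of projections that the paper itself invokes in the proof of Theorem \ref{theorem1}. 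One small detail to patch in your last step: the functional calculus $\chi_{[1/2,\infty)}(a_i)$ is only defined at indices where $\sigma(a_i)$ avoids $1/2$, for instance where $\norm{a_i^2-a_i}<1/4$; the set of remaining indices is $\mc{U}$-small, so you should set $q_i=0$ there (this does not change the $\mc{U}$-class), but as written the claim that the functional calculus "produces a genuine projection" is not literally meaningful at every index.
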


\begin{teo}\label{boolean0dim}
Let $A$ and $B$ be abelian, unital, real rank zero C*-algebras.  Write $A = C(X)$ and $B = C(Y)$, where $X$ and $Y$ are $0$-dimensional compact Hausdorff spaces.  Then $A \equiv B$ as metric structures if and only if $CL(X) \equiv CL(Y)$ as Boolean algebras.
\end{teo}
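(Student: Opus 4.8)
The plan is to reduce both elementary equivalences to a single topological statement and to connect them through the dualities of Gelfand and Stone. Concretely, I would show that each of $C(X)\equiv C(Y)$ and $CL(X)\equiv CL(Y)$ is equivalent to the assertion $(\ast)$ that there is an ultrafilter $\mc{U}$ with $\sum_{\mc{U}}X\cong\sum_{\mc{U}}Y$. For the C*-side I would run the chain: $C(X)\equiv C(Y)$ iff, by the Keisler--Shelah theorem (Theorem \ref{thm:KeislerShelah}), $C(X)^{\mc{U}}\cong C(Y)^{\mc{U}}$ for some $\mc{U}$; iff, by Lemma \ref{lem:Bankston}, $C(\sum_{\mc{U}}X)\cong C(\sum_{\mc{U}}Y)$; iff, by Gelfand (Banach--Stone) duality, $\sum_{\mc{U}}X\cong\sum_{\mc{U}}Y$. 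For the Boolean side I would run the analogous chain using the classical (discrete) Keisler--Shelah theorem and the Boolean half of Lemma \ref{lem:Bankston}: $CL(X)\equiv CL(Y)$ iff $CL(X)^{\mc{U}}\cong CL(Y)^{\mc{U}}$ for some $\mc{U}$, iff $CL(\sum_{\mc{U}}X)\cong CL(\sum_{\mc{U}}Y)$, iff (by Stone duality) $\sum_{\mc{U}}X\cong\sum_{\mc{U}}Y$. Combining the two chains yields the theorem.

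The Stone step is where care is required: recovering a space from its clopen algebra is only legitimate for $0$-dimensional spaces (as recorded in the Remark preceding the statement), so I must verify that the ultracopowers $\sum_{\mc{U}}X$ and $\sum_{\mc{U}}Y$ are themselves $0$-dimensional. I expect this to be the main obstacle, and I would dispatch it on the C*-algebraic side: by Lemma \ref{lem:Bankston}, $C(\sum_{\mc{U}}X)\cong C(X)^{\mc{U}}$, so it suffices to show that this ultrapower has real rank zero, equivalently that its self-adjoint elements with finite spectrum are dense. Given a self-adjoint $f=(f_i)_{\mc{U}}$ of norm at most $R$ and $\epsilon>0$, I would, using $0$-dimensionality of $X$, replace each representative $f_i$ by a locally constant function $g_i$ with $\norm{f_i-g_i}<\epsilon$ whose finitely many values lie in a single finite $\epsilon$-net $V\subseteq[-R,R]$ chosen independently of $i$. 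Then $g=(g_i)_{\mc{U}}$ satisfies $\prod_{v\in V}(g-v)=0$, so its spectrum is contained in $V$, while $\norm{f-g}\le\epsilon$. Hence finite-spectrum elements are dense, $C(\sum_{\mc{U}}X)$ has real rank zero, and $\sum_{\mc{U}}X$ is $0$-dimensional (and symmetrically for $Y$).

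With $0$-dimensionality in hand, Stone duality applies and the last equivalence in the Boolean chain goes through: one direction is automatic since a homeomorphism induces a Boolean isomorphism of clopen algebras, and the converse follows because a $0$-dimensional compact Hausdorff space is homeomorphic to the Stone space of its clopen algebra. The only remaining routine points are that the same ultrafilter can be carried through each chain (every step after Keisler--Shelah is an equivalence for a fixed $\mc{U}$), together with the standard facts that $C(Z)\cong C(W)$ iff $Z\cong W$ and that $CL(Z)\cong CL(W)$ iff $Z\cong W$ for Stone spaces $Z,W$. I do not anticipate difficulty there; the entire weight of the argument sits in the reduction to $(\ast)$ and the real-rank-zero computation establishing $0$-dimensionality of the ultracopowers.
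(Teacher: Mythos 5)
Your proposal is correct and takes essentially the same route as the paper's own proof: continuous Keisler--Shelah plus Lemma \ref{lem:Bankston} plus Gelfand duality on the C*-side, and discrete Keisler--Shelah plus Lemma \ref{lem:Bankston} plus Stone duality on the Boolean side, with the two chains meeting at the homeomorphism of ultracopowers. In fact you treat carefully the one point the paper glosses over when it says ``the converse direction is similar'': the Stone-duality step requires the ultracopowers $\sum_{\mc{U}}X$ and $\sum_{\mc{U}}Y$ to be $0$-dimensional, and your real-rank-zero computation in $C(X)^{\mc{U}}$ (locally constant representatives with values in a fixed finite net, hence dense finite-spectrum self-adjoints) is a correct way to supply exactly that missing justification.
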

\begin{proof}
Suppose that $A \equiv B$.  By the Keisler-Shelah theorem (Theorem \ref{thm:KeislerShelah}) there is an ultrafilter $\mc{U}$ such that $A^{\mc{U}} \cong B^{\mc{U}}$.  By Lemma \ref{lem:Bankston} $A^{\mc{U}} \cong C(\sum_{\mc{U}}X)$.  Thus we have $C(X_{\mc{U}}) \cong C(Y_{\mc{U}})$, and hence by Gelfand-Naimark $X_{\mc{U}}$ is homeomorphic to $Y_{\mc{U}}$.  Then $CL(\sum_{\mc{U}}X) \cong CL(\sum_{\mc{U}}Y)$.  Applying Lemma \ref{lem:Bankston} again, we have $CL(\sum_{\mc{U}}X) = CL(X)^{\mc{U}}$, so we obtain $CL(X)^{\mc{U}} \cong CL(Y)^{\mc{U}}$, and in particular, $CL(X) \equiv CL(Y)$.  The converse direction is similar, starting from the Keisler-Shelah theorem for first-order logic (see \cite{Shelah}).
\end{proof}

It is interesting to note that the above result fails when $C(X)$ is considered only as a ring in first-order discrete logic (see \cite[Section 2]{Bankston2}).

\begin{Cor}
There are exactly $\aleph_0$ distinct complete theories of abelian, unital, real rank zero C*-algebras.
\end{Cor}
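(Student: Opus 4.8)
The plan is to reduce the counting of complete theories of abelian, unital, real rank zero C*-algebras to the counting of complete theories of Boolean algebras via Theorem \ref{boolean0dim} and Stone duality, and then to appeal to the classical classification of the elementary theory of Boolean algebras.

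First I would recall that every unital abelian real rank zero C*-algebra has the form $C(X)$ for a compact Hausdorff $0$-dimensional space $X$, and that by Stone duality the correspondence $X \mapsto CL(X)$ realizes \emph{every} Boolean algebra up to isomorphism: for an arbitrary Boolean algebra $B$, its Stone space $S(B)$ is compact Hausdorff and $0$-dimensional, and $CL(S(B)) \cong B$. Theorem \ref{boolean0dim} states that $C(X) \equiv C(Y)$ if and only if $CL(X) \equiv CL(Y)$, so the assignment sending the elementary equivalence class of $C(X)$ to that of $CL(X)$ is well-defined and injective; combined with the surjectivity just noted, it is a bijection between the set of complete theories of abelian unital real rank zero C*-algebras and the set of elementary equivalence classes of Boolean algebras.

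It then remains to count the latter. Here I would invoke the classical fact, due to Tarski, that the first-order theory of Boolean algebras has exactly $\aleph_0$ complete extensions; through the bijection above this immediately gives the bound $\leq \aleph_0$. For the reverse bound it is enough to exhibit infinitely many pairwise non-elementarily-equivalent examples: for each $n \in \en$ let $B_n$ be a Boolean algebra with exactly $n$ atoms (and, to remain within the infinite-dimensional convention of the paper, with an atomless complementary part so that $B_n$ is infinite). The sentence asserting ``there exist exactly $n$ atoms'' distinguishes the $B_n$ up to elementary equivalence, so the algebras $C(S(B_n))$ realize $\aleph_0$-many distinct complete theories, yielding the bound $\geq \aleph_0$.

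There is no serious obstacle once Theorem \ref{boolean0dim} is in hand; the only point requiring care is the surjectivity of the correspondence, namely verifying via Stone duality that the Boolean algebras arising as $CL(X)$ with $X$ an admissible (compact, Hausdorff, $0$-dimensional) space are \emph{all} Boolean algebras, so that the transferred count is genuinely the full number of complete theories of Boolean algebras rather than that of some restricted subclass.
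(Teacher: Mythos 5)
Your proof is correct and takes essentially the same route as the paper: reduce to Boolean algebras via Theorem \ref{boolean0dim} together with Stone duality, then invoke the classical classification (see \cite[Theorem 5.5.10]{ChangKeisler}) giving exactly $\aleph_0$ complete theories of Boolean algebras. The paper leaves the bijectivity of the correspondence and the lower bound implicit; these are precisely the details you made explicit, so there is no substantive difference.
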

\begin{proof}
There are exactly $\aleph_0$ distinct complete theories of Boolean algebras; see \cite[Theorem 5.5.10]{ChangKeisler} for a description of these theories.
\end{proof}
\begin{Cor}\label{cor:elementaryEquivalence}
If $X$ and $Y$ are infinite, compact, $0$-dimensional spaces both with the same finite number of isolated points or both having a dense set of isolated points, then $C(X)\equiv C(Y)$.

In particular, let $\alpha$ be any infinite ordinal. Then $C(\alpha+1)\equiv C(\beta\omega)$. Moreover, if $\alpha$ is a countable limit, $C(2^\omega)\equiv C(\beta\omega\setminus\omega)\equiv C(\beta\alpha\setminus\alpha)$.
\end{Cor}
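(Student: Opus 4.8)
The plan is to push everything to the first-order theory of Boolean algebras via Theorem~\ref{boolean0dim}, which says that for $0$-dimensional compact $X,Y$ one has $C(X)\equiv C(Y)$ if and only if $CL(X)\equiv CL(Y)$. The topological hypotheses translate exactly as recorded before Theorem~\ref{teo:satbooleanalgebra}: the atoms of $CL(X)$ are precisely the isolated points of $X$, so ``$X$ has exactly $n$ isolated points'' becomes ``$CL(X)$ has exactly $n$ atoms'', ``$X$ has a dense set of isolated points'' becomes ``$CL(X)$ is atomic'', and ``$X$ has no isolated points'' becomes ``$CL(X)$ is atomless''. Thus each clause of the corollary reduces to the assertion that a topological hypothesis pins down the complete theory of $CL(X)$, and I would finish by invoking the classification of the $\aleph_0$ complete theories of Boolean algebras (as in \cite[Theorem 5.5.10]{ChangKeisler}).

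First I would treat the case of finitely many isolated points. If $X$ has exactly $n$ isolated points $x_1,\dots,x_n$, each singleton $\{x_i\}$ is clopen, so $X'=X\setminus\{x_1,\dots,x_n\}$ is clopen, compact, $0$-dimensional, and---using that there are only finitely many isolated points together with the Hausdorff property to separate a non-isolated point from all of them---has no isolated points. Hence $CL(X)\cong \mP(\{1,\dots,n\})\times CL(X')$ with $CL(X')$ atomless. Since any two atomless Boolean algebras are elementarily equivalent (the theory of atomless Boolean algebras is complete), and since elementary equivalence is preserved by finite products, it follows that $CL(X)\equiv \mP(\{1,\dots,n\})\times B_0\equiv CL(Y)$ for any $Y$ with the same number $n$ of isolated points, where $B_0$ is the countable atomless Boolean algebra. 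This settles the first clause.

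The atomless examples in the ``moreover'' sentence are handled directly: $2^\omega$, $\beta\omega\setminus\omega$, and $\beta\alpha\setminus\alpha$ (for $\alpha$ a countable limit) all have empty isolated-point set, so their clopen algebras are atomless and hence elementarily equivalent, giving $C(2^\omega)\equiv C(\beta\omega\setminus\omega)\equiv C(\beta\alpha\setminus\alpha)$. For the dense-isolated-points clause (and the assertion $C(\alpha+1)\equiv C(\beta\omega)$) the clopen algebra is atomic with infinitely many atoms, and here one must isolate the correct complete theory from the classification: the relevant datum is the (infinite) number of atoms together with the theory of the quotient $CL(X)/I$, where $I$ is the ideal generated by the atoms.

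The step I expect to be the main obstacle is exactly this last identification. Being atomic with infinitely many atoms does not by itself determine the Tarski invariant, since one still has to control the quotient $CL(X)/I$, which reflects the structure of the non-isolated part of $X$. The delicate point is therefore to verify that the standing hypothesis forces $CL(X)/I$ into a single elementary-equivalence class and that the resulting invariant matches across the two spaces being compared; this is where the real computation lives, and where care is needed to confirm that ``dense set of isolated points'' supplies enough information about the quotient.
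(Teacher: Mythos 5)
Your reduction to Boolean algebras via Theorem~\ref{boolean0dim} is exactly the paper's route, and two of your three cases are handled correctly: the finitely-many-isolated-points clause (your decomposition $CL(X)\cong 2^n\times CL(X')$ with $CL(X')$ atomless, plus completeness of the atomless theory and preservation of $\equiv$ under finite products, is sound and in fact more self-contained than the paper's appeal to the classification), and the atomless examples $2^\omega$, $\beta\omega\setminus\omega$, $\beta\alpha\setminus\alpha$. But the dense-isolated-points clause --- which is precisely what the assertion $C(\alpha+1)\equiv C(\beta\omega)$ rests on --- is left unproved: you explicitly defer ``the real computation,'' so the proposal as written does not establish the corollary.

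Moreover, the obstacle you anticipate is illusory, because the invariant you propose to compute is the wrong one. The theory of $CL(X)/I$, where $I$ is the ideal \emph{generated by the atoms}, is not an elementary invariant: that ideal is not first-order definable, and the standard example is $\mathcal{P}(\omega)$ versus the finite--cofinite algebra on $\omega$, whose quotients by their atom-generated ideals are (respectively) atomless and the two-element algebra, yet the two algebras \emph{are} elementarily equivalent. In the Ershov--Tarski classification cited by the paper (\cite[Theorem 5.5.10]{ChangKeisler}), the definable ideal used to form iterated quotients is the one generated by the \emph{atomic} elements together with the \emph{atomless} elements; in an atomic algebra this ideal is the whole algebra, so the iteration terminates immediately and the invariant of an atomic Boolean algebra is just its number of atoms (capped at $\omega$) together with the absence of a nonzero atomless element. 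Consequently ``atomic with infinitely many atoms'' is already a complete theory; one can also verify this directly by an Ehrenfeucht--Fra\"iss\'e argument in which the duplicator matches cells so that atom counts agree up to a threshold $2^{n-i}$ at round $i$, using that in an atomic algebra an element bounding finitely many atoms is the join of those atoms. Since $X$ infinite, compact, $0$-dimensional with dense isolated points gives exactly that $CL(X)$ is atomic with infinitely many atoms, the dense case follows at once from the classification you already cite --- this is how the paper's one-line proof covers both clauses uniformly.
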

\begin{proof}Given $X,Y$ as in the hypothesis, again by Theorem  \cite[Theorem 5.5.10]{ChangKeisler}, we have that $CL(X)\equiv CL(Y)$.
\end{proof}
\begin{remark}
The construction of $C(X)$ from the Boolean algebra $CL(X)$ can be described directly.  Recall that $C(S(B))=\overline{\ce B}$ where 
\[\ce B=\{\sum_{i\leq n}\lambda_ix_i\mid \lambda_i\in\ce, x_i\in B, n\in\en\}.\]
In the same way, given an abelian group $\Gamma$, we can define the associated C*-algebra $\overline{\ce\Gamma}$.  Martino Lupini asked whether the analogue of Theorem \ref{boolean0dim} holds in this setting, that is, whether it is true that $\Gamma\equiv\Gamma'\iff\overline{\ce\Gamma}\equiv\overline{\ce\Gamma'}$ for countable abelian groups.

In fact, both implications fail.  For one direction, recall that $\overline{\ce\Gamma}=C(\hat\Gamma)$ where $\hat\Gamma$ is the dual group of $G$.  If $p$ is prime then the dual of $\bigoplus_{\en}\zet/p\zet$ is $p^\en$, hence \[\overline{\ce\bigoplus_\en\zet/p\zet}\cong\overline{\ce\bigoplus_\en\zet/q\zet}\cong C(2^\en)\] for all primes $p$ and $q$; clearly the groups are not elementary equivalent.

For the forward implication, we given an example pointed out to us by Tomasz Kania.  It is known that any two torsion-free divisible abelian groups are elementarily equivalent (see \cite[p. 40]{ChangKeisler}), so in particular, $\qu \cong \qu \oplus \qu$.  The dual group of $\qu$ with the discrete topology is a $1$-dimensional indecomposable continuum (see \cite[25.4, p. 404]{HewittRoss}), but the dual group of $\qu\oplus\qu$ is $2$-dimensional.  Hence $\overline{\ce \qu} \not\equiv \overline{\ce(\qu \oplus \qu)}$.
\end{remark}
\subsection{Saturation}
This section is dedicated to the analysis of the relations between topology and countable saturation of abelian C*-algebras. In particular, we want to study which kind of topological properties the compact Hausdorff space $X$ has to carry in order to have some degree of saturation of the metric structure $C(X)$ and, conversely, to establish properties that are incompatible with the weakest degree of saturation of the corresponding algebra.  From now on $X$ will denote an infinite compact Hausdorff space (note that if $X$ is finite then $C(X)_{\leq 1}$ is compact, and so $C(X)$ is fully saturated).

The first limiting condition for the weakest degree of saturation are given by the following Lemma:
\begin{Lemma}\label{abcondition1}
Let $X$ be an infinite compact Hausdorff space, and suppose that $X$ satisfies one of the following conditions:
\begin{enumerate}[(1)]
\item\label{abcond1} $X$ has the countable chain condition;
\item\label{abcond2} $X$ is separable;
\item\label{abcond3} $X$ is metrizable;
\item\label{abcond3a} $X$ is homeomorphic to a product of two infinite compact Hausdorff spaces;
\item\label{abcond4} $X$ is not sub-Stonean;
\item\label{abcond5} $X$ is Rickart.
\end{enumerate}
Then $C(X)$ is not countably degree-$1$ saturated.
\end{Lemma}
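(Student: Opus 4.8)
The plan is to prove each of the six conditions by exhibiting a countable degree-$1$ type that is approximately finitely satisfiable but not realized, using the characterizations already established in the excerpt. The unifying observation is that conditions \ref{abcond1}--\ref{abcond3} all force $X$ to carry the countable chain condition, and conditions \ref{abcond3a} reduces to the tensor-product obstruction, so I would organize the proof around a small number of genuinely distinct arguments rather than six separate ones.

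First I would dispatch \ref{abcond1} directly: by the remark following the Notation on sub-Stonean spaces, $X$ carries the countable chain condition if and only if $C(X)$ has few orthogonal positive elements, so Lemma \ref{lem:scccbad} immediately gives that $C(X)$ is not countably degree-$1$ saturated. Next I would show that \ref{abcond2} and \ref{abcond3} both imply \ref{abcond1}: a separable space has the countable chain condition (a dense countable set meets every member of a disjoint open family, so the family is countable), and a compact metrizable space is separable, hence also ccc. Thus the first three cases all collapse into the ccc case already handled.

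For \ref{abcond3a}, if $X \cong X_1 \times X_2$ with both factors infinite compact Hausdorff, then $C(X) \cong C(X_1) \otimes C(X_2)$ is a tensor product of two infinite-dimensional C*-algebras; by the last bullet of Theorem \ref{thm:ConsequencesOfSat} (being a nontrivial tensor product contradicts being $SAW^*$), $C(X)$ cannot be countably degree-$1$ saturated. For \ref{abcond4}, I would use the fact, recorded in Theorem \ref{thm:ConsequencesOfSat}, that a countably degree-$1$ saturated algebra is $\sigma$-sub-Stonean; translating the $\sigma$-sub-Stonean property of $C(X)$ into topology should yield exactly that $X$ is sub-Stonean (disjoint open $\sigma$-compact sets have disjoint closures, matched by the existence of orthogonal contractions $f,g$ in the relevant relative commutant). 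Hence a non-sub-Stonean $X$ fails saturation.

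The case \ref{abcond5}, that a Rickart space fails saturation, is the one I expect to be the main obstacle, since Rickart is a \emph{strengthening} of sub-Stonean and so the previous argument gives nothing. Here the idea is that in a Rickart space the closures of disjoint open $\sigma$-compact sets are clopen, which makes $C(X)$ ``too rigid'': I would try to build an explicit degree-$1$ type asserting that a contraction $x$ is orthogonal to a prescribed increasing sequence of projections $p_n$ (coming from an infinite disjoint clopen family, available because $X$ is infinite) while having norm $1$, arranged so that any realization would have to be supported on the clopen closure of $\bigcup_n \{p_n\}$, forcing a single limiting clopen set that the Rickart hypothesis supplies but which then contradicts the norm-$1$ requirement off of every $p_n$. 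Making the approximate finite satisfiability precise while ruling out a genuine solution is the delicate point; the cleanest route may instead be to show directly that a Rickart space carries the countable chain condition, or to reduce \ref{abcond5} to \ref{abcond1} via the structure of the clopen algebra, thereby folding it back into the Lemma \ref{lem:scccbad} argument.
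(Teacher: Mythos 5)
Your handling of cases \ref{abcond1}--\ref{abcond4} is correct and coincides with the paper's proof: \ref{abcond3} $\Rightarrow$ \ref{abcond2} $\Rightarrow$ \ref{abcond1}, case \ref{abcond1} is an instance of Lemma \ref{lem:scccbad}, case \ref{abcond3a} follows from the tensor-product clause of Theorem \ref{thm:ConsequencesOfSat}, and case \ref{abcond4} follows from $\sigma$-sub-Stonean-ness exactly as you describe (the paper simply cites Pedersen and Farah--Hart for that translation).

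The genuine gap is in case \ref{abcond5}. Your proposed ``cleanest route'' is false: Rickart does not imply the countable chain condition, so \ref{abcond5} cannot be folded back into \ref{abcond1}. Indeed, for any uncountable discrete set $D$ the space $\beta D$ is extremally disconnected (Stonean), hence Rickart, yet the singletons of $D$ form an uncountable family of pairwise disjoint nonempty open sets; this independence of Rickart from ccc is precisely why \ref{abcond5} needs its own argument. Your primary sketch also has two problems as written: a realization of the type ``$\norm{x}=1$ and $xp_n=0$ for all $n$'' must \emph{vanish} on $\bigcup_n U_n$, not be supported on its closure; and such a realization genuinely exists whenever $\bigcup_n U_n$ is not dense, so the type witnesses non-saturation only if the countable disjoint family can be chosen with dense union --- and producing such a \emph{countable} family is exactly where the Rickart hypothesis must enter, since in a non-ccc space a maximal disjoint family may be uncountable. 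The repair: by the Rickart property the closure $W$ of the open $\sigma$-compact set $\bigcup_n U_n$ is clopen, so replacing $U_1$ by $U_1\cup(X\setminus W)$ gives a countably infinite disjoint clopen family with dense union (you also need that compact Rickart spaces are $0$-dimensional, as the paper records, to get clopen $U_n$ at all --- ``$X$ infinite'' alone does not provide them); for this family the type is finitely satisfiable, taking $x=\chi_{U_{N+1}}$, but any realization vanishes on a dense set and hence is $0$, contradicting $\norm{x}=1$. With that fix your argument is correct, and it is genuinely different from the paper's: the paper instead rephrases Rickart algebraically (every bounded increasing sequence of self-adjoint elements of $C(X)$ has a least upper bound), sets $b_n=\sum_{i\leq n}a_i$ for pairwise orthogonal positive norm-one $a_n$ with least upper bound $b$, and writes a degree-$1$ type forcing a positive $x$ with $b_n\leq x$ for all $n$, $x\leq b$, and $\norm{b-x}\geq 1$; this is finitely satisfied by $b_{n+1}$ but unrealizable because $b$ is the \emph{least} upper bound. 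The paper's version has the advantage of using only the existence of a single bounded increasing non-convergent sequence admitting a least upper bound (much weaker than Rickart), and of not invoking $0$-dimensionality or commutativity.
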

\begin{proof}
First, note that \ref{abcond3} $\Rightarrow$ \ref{abcond2} $\Rightarrow$ \ref{abcond1}.  The fact that \ref{abcond1} implies that $C(X)$ is not countably degree-$1$ saturated is an instance of Lemma \ref{lem:scccbad}.  Failure of countable degree-$1$ saturation for spaces satisfying \ref{abcond3a} follows from Theorem \ref{thm:ConsequencesOfSat}, while for those satisfying \ref{abcond4} it follows from \cite[Remark 7.3]{pedersencorona} and \cite[Proposition 2.6]{farah2011countable}.  It remains to consider \ref{abcond5}.

Let $X$ be Rickart.  The Rickart condition can be rephrased as saying that any bounded increasing monotone sequence of self-adjoint functions in $C(X)$ has a least upper bound in $C(X)$ (see \cite[Theorem 2.1]{pedersensubstonean}). 

Consider a sequence $(a_n)_{n\in\en}\subseteq C(X)_1^+$ of positive pairwise orthogonal elements, and let $b_n=\sum_{i\leq n} a_i$.  Then $(b_n)_{n\in\en}$ is a bounded increasing sequence of positive operators,  so it has a least upper bound $b$. Since $\norm{b_n}=1$ for all $n$, we also have $\norm{b}=1$. The type consisting of $P_{-3}(x)=x$, with $K_{-3}=\{1\}$, $P_{-2}(x)=b-x$ with $K_{-2}=[1,2]$, $P_{-1}(x)=b-x-1$ with $K_{-1}=\{1\}$ and $P_n(x)=x-b_n-1$ with $K_n=[0,1]$ is consistent with partial solution $b_{n+1}$ (for $\{P_{-3},\ldots,P_n\}$).  This type cannot have a positive solution $y$, since in that case we would have that $y-b_n\geq 0$ for all $n\in\en$, yet $b-y>0$, a contradiction to $X$ being Rickart.
\end{proof}

Note that the preceding proof shows that the existence of a particular increasing bounded sequence that is not norm-convergent but does have a least upper bound (a condition much weaker than being Rickart) is sufficient to prove that $C(X)$ does not have countable degree-$1$ saturation. Moreover, the latter argument does not use that the ambient algebra is abelian.

We will compare the saturation of $C(X)$ (in the sense of Definition \ref{cd1s}) with the saturation of $CL(X)$, in the sense of the above theorem.

The results that we are going to obtain are the following:
\begin{teo}\label{theorem1}
Let $X$ be a compact $0$-dimensional Hausdorff space. Then 
\[C(X) \text{ is countably saturated} \Rightarrow CL(X)\text{ is countably saturated }\] 
and 
\[CL(X)\text{ is countably saturated }\Rightarrow C(X) \text{ is countably q.f. saturated.}\]
\end{teo}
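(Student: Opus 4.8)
The plan is to establish the two implications by transporting types back and forth between $C(X)$ and $CL(X)$, exploiting that in a $0$-dimensional space the clopen sets correspond exactly to the projections (idempotent self-adjoint elements) of $C(X)$, equivalently to the $\{0,1\}$-valued continuous functions. The bridge between the two notions of saturation is the identification $CL(X) \cong \{p \in C(X) : p = p^2 = p^*\}$; a type in the Boolean algebra language translates into a type in the C*-language by replacing Boolean operations with their algebraic counterparts (join becomes $p + q - pq$, meet becomes $pq$, complement becomes $1 - p$), and conversely a quantifier-free C*-type whose solutions can be taken to be projections descends to a Boolean type.

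For the first implication, assume $C(X)$ is countably saturated, and let $\Sigma$ be a finitely satisfiable type over $CL(X)$ in the language of Boolean algebras. First I would translate each Boolean formula $\varphi$ into a C*-formula $\varphi^*$ using the dictionary above, so that for projections $\varphi$ holds in $CL(X)$ exactly when $\varphi^*$ evaluates to $0$. The key point is to force the witness produced by C*-saturation to be an actual projection: I would augment the translated type with the condition $\norm{x - x^2} = 0$ together with self-adjointness $\norm{x - x^*} = 0$. The augmented type remains finitely satisfiable (the Boolean witnesses are genuine projections), so by countable saturation of $C(X)$ it is realized by some $p$; the conditions $p = p^2 = p^*$ guarantee $p \in CL(X)$, and then $p$ satisfies $\Sigma$. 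The main subtlety here is that a Boolean type of finite character may involve existential quantifiers over the Boolean algebra, and these must be handled by the full (not merely quantifier-free) saturation of $C(X)$ — which is exactly why this direction needs countable saturation rather than a weaker form.

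For the second implication, assume $CL(X)$ is countably saturated and aim to show $C(X)$ is countably quantifier-free saturated. Let $\{\norm{P_n(\overline x)} \in K_n\}$ be a finitely satisfiable quantifier-free C*-type with parameters in $C(X)$. The strategy is to reduce continuous data to Boolean data by discretization: since $X$ is $0$-dimensional, each parameter and each partial solution can be uniformly approximated by simple functions with clopen level sets, so one approximates the norms $\norm{P_n(\overline b)}$ through finite combinatorial data living in $CL(X)$. I would set up, for each tolerance $1/m$, a Boolean type encoding the clopen partition data of a $1/m$-approximate partial solution, and use the directedness/chain reformulation available for the relevant Boolean structure to realize the limiting data inside $CL(X)$. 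The hard part — and the reason only quantifier-free saturation is obtained in this direction — is the passage from approximate to exact realization: the continuous logic type only asks for $\norm{P_n(\overline b)} \in K_n$ up to the given compact sets, and one must carefully assemble the countably many clopen witnesses produced by $CL(X)$-saturation into a single genuine element of $C(X)_{\leq 1}$ whose norms land exactly in the $K_n$, controlling the simultaneous approximation across all $n$ via a diagonal argument. Showing that the discretized Boolean type is itself finitely satisfiable, and that realizing it yields an exact C*-solution rather than merely an approximate one, is where the real work lies.
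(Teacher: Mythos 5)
Your first implication follows essentially the same route as the paper: translate Boolean formulas into C*-formulas and realize the translated type by full countable saturation, forcing the witnesses to be projections. One point you leave implicit is actually the crux there: the formulas of a Boolean type may themselves contain quantifiers, and translating $\exists z(\dots)$ as an $\inf_z$ relativized to projections is only sound because being a projection is a \emph{weakly stable} relation --- if $\norm{z-z^2}$ and $\norm{z-z^*}$ are small then $z$ is uniformly close to an honest projection, so an infimum near $0$ witnessed by an approximate projection yields an actual projection nearly satisfying the formula. Your proposal imposes the conditions $\norm{x-x^2}=\norm{x-x^*}=0$ only on the free variables of the type; without the weak-stability remark the translation of the quantified parts is not justified.

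The second implication is where the genuine gap lies. Your discretization plan is a sensible and genuinely different strategy from the paper's, but every step that makes this direction nontrivial is one you explicitly defer: (i) you must write down the Boolean type --- countably many clopen-partition variables at each resolution, refinement/coherence conditions across resolutions, and emptiness/nonemptiness conditions pinning each $\norm{P_n(\overline{b})}$ within $O(1/m)$ of $r_n$ at every resolution $m$ --- and verify it is \emph{finitely} satisfiable using only the approximate C*-partial solutions, which requires controlling the Lipschitz constants of the $P_n$ against the discretization error; (ii) you must check that a realization reassembles into a genuine element of $C(X)_{\leq 1}$ whose norms land exactly in the $K_n$. Neither is done. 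Moreover, your appeal to ``the directedness/chain reformulation'' (Theorem \ref{teo:satbooleanalgebra}) is misdirected: that characterization concerns chains in atomless algebras, whereas what you actually need is that an $\aleph_1$-saturated Boolean algebra realizes countable types in \emph{countably many variables}, which holds but requires the standard variable-by-variable induction and should be stated. As written, the proposal identifies where the work lies without doing it, so it is a plan rather than a proof. For contrast, the paper's actual argument is entirely different: under CH it codes the countably many relevant functions by clopen sets (Proposition \ref{coding}), finds a countably saturated Boolean subalgebra $B\subseteq CL(X)$ of size $\aleph_1$ containing the codes, observes that $C(S(B))$ is then an ultrapower of a separable algebra (hence countably saturated), pushes the type through the norm-preserving embedding $C(S(B))\to C(X)$ dual to the inclusion, and finally removes CH by a $\sigma$-closed forcing absoluteness argument (Proposition \ref{saturationAbsolute}). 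A completed version of your approach would avoid CH, forcing, and ultrapowers altogether, which would be a real improvement --- but the completion is precisely the missing content.
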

\begin{teo}\label{theorem2}
Let $X$ be a compact $0$-dimensional Hausdorff space, and assume further that $X$ has a finite number of isolated points.  If $C(X)$ is countably degree-$1$ saturated, then $CL(X)$ is countably saturated.  Moreover, if $X$ has no isolated points, then countable degree-$1$ saturation and countable saturation coincide for $C(X)$.
\end{teo}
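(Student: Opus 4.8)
The plan is to route both assertions through a single technical core: the implication that if $C(X)$ is countably degree-$1$ saturated and $CL(X)$ is atomless (equivalently, $X$ has no isolated points), then $CL(X)$ is countably saturated. Granting this, the first assertion follows by splitting off the finitely many isolated points, and the ``moreover'' part follows by combining the core with Theorem~\ref{theorem1} and the quantifier elimination result (Theorem~\ref{thm:quantifierElimination}). The reverse implication in the ``moreover'' part, that countable saturation implies countable degree-$1$ saturation, is immediate from the definitions.

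For the core implication, since $CL(X)$ is atomless, by Theorem~\ref{teo:satbooleanalgebra} it suffices to interpolate countable directed families: given $Y<Z$ with $Y\cup Z$ countable, produce a clopen $c$ with $Y<c<Z$. After extracting a cofinal increasing enumeration $Y=\set{y_n}$ and a coinitial decreasing enumeration $Z=\set{z_n}$, I identify each clopen set with its characteristic function, a projection in $C(X)$, and form the degree-$1$ type asserting, for all $n$,
\[\norm{\chi_{y_n}x-\chi_{y_n}}=0 \quad\text{and}\quad \norm{(1-\chi_{z_n})x}=0,\]
which forces a solution to equal $1$ on each $y_n$ and to vanish off each $z_n$. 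This type is finitely satisfiable, since for $n\le N$ the element $\chi_{y_N}$ works (as $y_N\subseteq z_N$). By degree-$1$ saturation together with Lemma~\ref{lem:SatisfiableByPositive}, the type is realized by a positive contraction $b$, so $0\le b\le 1$, with $b\equiv 1$ on $\bigcup_n y_n$ and $\set{b>0}\subseteq\bigcap_n z_n$. The degree-$1$ restriction prevents demanding that $x$ be a projection, so $b$ need not be $\set{0,1}$-valued; here $0$-dimensionality of $X$ recovers a clopen set. The set $F=\set{b\ge 2/3}$ is closed, $U=\set{b>1/3}$ is open, and $F\subseteq U$; since a compact $0$-dimensional space admits a clopen set between any compact set and an open superset, there is a clopen $c$ with $F\subseteq c\subseteq U$. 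Then $y_n\subseteq F\subseteq c$ and $c\subseteq U\subseteq\set{b>0}\subseteq z_n$, giving $y_n\le c\le z_n$. Strictness is arranged using atomlessness: along strictly monotone sequences one already has $c\supseteq y_{n+1}\supsetneq y_n$ and $c\subseteq z_{n+1}\subsetneq z_n$, while degenerate cases (a maximum of $Y$ or a minimum of $Z$) are handled by splitting a nonzero difference $z\setminus y$.

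To assemble the theorem, for the first assertion write $X=Y\sqcup\set{x_1,\dots,x_k}$ where the $x_i$ are the finitely many isolated points; each $\set{x_i}$ is clopen, so $Y$ is compact $0$-dimensional with no isolated points, $C(X)\cong C(Y)\oplus\ce^k$, and $CL(X)\cong CL(Y)\times 2^k$. As $C(Y)$ is a corner of $C(X)$ cut by the central projection $\chi_Y$, it is again countably degree-$1$ saturated by the corner-preservation property noted in Section~\ref{section:Calkin2}; the core implication gives that $CL(Y)$ is countably saturated, and adjoining the finite factor $2^k$ preserves countable saturation, so $CL(X)$ is countably saturated. For the ``moreover'' part, with $X$ having no isolated points the core implication yields that $CL(X)$ is countably saturated, whence $C(X)$ is countably quantifier-free saturated by Theorem~\ref{theorem1}; the quantifier elimination of Theorem~\ref{thm:quantifierElimination} then upgrades quantifier-free saturation to full countable saturation, and together with the trivial reverse implication this shows the two notions coincide.

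I expect the principal difficulty to be exactly the degree-$1$ constraint in the core implication: because ``$x$ is a projection'' is a degree-$2$ condition, the type can only pin down a positive element, and the real work is the $0$-dimensional rounding that converts this approximate characteristic function into an honest clopen interpolant. The second essential ingredient, less delicate once in hand but indispensable, is the appeal to quantifier elimination needed to pass from quantifier-free to full saturation in the ``moreover'' statement.
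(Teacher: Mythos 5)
Your proposal is correct and follows the paper's own proof essentially step for step: the atomless core is the paper's Proposition \ref{totaldisc} (the same degree-$1$ type on characteristic functions, realized by a positive contraction via Lemma \ref{lem:SatisfiableByPositive} and rounded to a clopen interpolant by compactness and $0$-dimensionality), the finitely-many-isolated-points case is the same direct-sum splitting with the corner fact making explicit what the paper leaves implicit, and the ``moreover'' part is obtained exactly as in the paper by chaining the core with Theorem \ref{theorem1} and the quantifier elimination of Theorem \ref{thm:quantifierElimination}. The one caveat is that your strictness repair (splitting a difference $z\setminus y$) only covers the case where $Y$ has a maximum \emph{and} $Z$ has a minimum, not the mixed case (say $Y$ with maximum $y$ but $Z$ strictly decreasing), which requires either adding a norm-one condition such as $\norm{(1-\chi_y)x}=1$ to the type or the contradiction argument the paper uses when $\bigcup U_n=\bigcap V_n$; since the paper's own handling of these degenerate cases is comparably terse, this is a shared gloss rather than a departure from its proof.
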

\subsection*{Proof of Theorem \ref{theorem1}}
Countable saturation of $C(X)$ (for all formulas in the language of C*-algebras) implies saturation of the Boolean algebra, since being a projection is a weakly-stable relation, so every formula in $CL(X)$ can be rephrased in a formula in $C(X)$; to do so, write $\sup$ for $\forall$, $\inf$ for $\exists$, $\norm{x-y}$ for $x\neq y$, and so forth, restricting quantification to projections.  This establishes the first implication in Theorem \ref{theorem1}.  The second implication will require more effort.  To start, we will to need the following Proposition, relating elements of $C(X)$ to certain collections of clopen sets:
\begin{prop}\label{coding} Let $X$ be a compact $0$-dimensional space and $f\in C(X)_{\leq 1}$. Then there exists a countable collection of clopen sets $\tilde Y_f=\{Y_{n,f} : n \in \en\}$ which completely determines $f$, in the sense that for each $x \in X$, the value $f(x)$ is completely determined by $\{n\colon x\in Y_{n,f}\}$.
\end{prop}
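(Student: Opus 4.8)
The plan is to exploit the fact that on a compact $0$-dimensional space the locally constant functions---the finite $\ce$-linear combinations of indicator functions of clopen sets---are dense in $C(X)$, and then to collect the clopen sets witnessing a sequence of such approximations to $f$ into the desired countable family.

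First I would produce, for each $k \in \en$, a finite clopen partition $\mc{P}_k$ of $X$ together with a function $s_k$ that is constant on each piece of $\mc{P}_k$ and satisfies $\norm{f - s_k} \le 1/k$. To build $\mc{P}_k$, note that $f(X)$ is a compact subset of $\set{z \in \ce : \abs{z} \le 1}$, hence is covered by finitely many open balls of radius $1/(2k)$; the preimages $f^{-1}(B)$ of these balls form an open cover of $X$. Since $X$ is compact and $0$-dimensional, each point has a clopen neighbourhood contained in some $f^{-1}(B)$, so finitely many such neighbourhoods cover $X$ and can be disjointified into a finite clopen partition $\mc{P}_k$ refining the cover. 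On each piece $C \in \mc{P}_k$ the function $f$ has diameter less than $1/k$, so setting $s_k$ equal to a fixed value of $f$ on $C$ yields $\norm{f - s_k} \le 1/k$.

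Next I would take $\tilde Y_f = \set{Y_{n,f} : n \in \en}$ to be an enumeration of the countable collection $\bigcup_{k \in \en}\mc{P}_k$ of all the clopen pieces used, and verify that this family determines $f$. Suppose $x, x' \in X$ satisfy $\set{n : x \in Y_{n,f}} = \set{n : x' \in Y_{n,f}}$. For each $k$, the point $x$ lies in a unique piece $C \in \mc{P}_k$; since $C = Y_{n,f}$ for some $n$ and $x \in Y_{n,f}$, we also have $x' \in Y_{n,f} = C$, so $x$ and $x'$ lie in the same piece of $\mc{P}_k$ and thus $s_k(x) = s_k(x')$. Consequently
\[ \abs{f(x) - f(x')} \le \abs{f(x) - s_k(x)} + \abs{s_k(x) - s_k(x')} + \abs{s_k(x') - f(x')} \le \tfrac{1}{k} + 0 + \tfrac{1}{k} = \tfrac{2}{k}, \]
and letting $k \to \infty$ gives $f(x) = f(x')$. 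Hence $f(x)$ depends only on $\set{n : x \in Y_{n,f}}$, which is exactly the assertion.

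I do not expect a serious obstacle. The only delicate point is the refinement step, namely replacing the open cover $\set{f^{-1}(B)}$ by a genuine \emph{finite clopen} partition; this is where compactness and $0$-dimensionality are both used, and it is precisely the standard mechanism underlying the density of locally constant functions in $C(X)$. Everything else is bookkeeping---assembling countably many finite partitions into a single countable family and reading off the determination of $f$ from set membership.
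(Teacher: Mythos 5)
Your proof is correct, and it differs in mechanism from the paper's, even though both start from the same idea of covering the range of $f$ by balls of radius comparable to $1/k$ and pulling back along $f$. The paper never forms partitions: for each scale $1/m$ it takes the preimages $f^{-1}(B_{1/m}(y))$ of balls centred at the finitely many grid points $y \in \ce_{m,1}$, observes that each such preimage is open and $\sigma$-compact, hence a countable union of clopen sets, and puts all of these (countably many per scale) into $\tilde X_f$. The price is a fussier recovery argument: since those sets overlap, the paper must note that two balls whose preimages meet have centres within $\sqrt{2}/m$ of each other, select for each $x$ and $m$ a grid point $a_{x,m}$ of minimal modulus among those whose sets contain $x$, and prove $f(x) = \lim_m a_{x,m}$. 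Your disjointification into finite clopen partitions $\mc{P}_k$ --- in effect, the density of locally constant functions in $C(X)$ --- makes the recovery step trivial (two points with the same membership pattern lie in the same piece of every $\mc{P}_k$, so all approximants $s_k$ agree on them), at the cost of the refinement step, which is standard and which you carry out correctly. Both constructions record the same essential data, namely that $f$ varies by at most $O(1/k)$ on each set in the family, and either would serve the later application in Theorem \ref{thm:CHqfsat}, where the family is transferred into a countably saturated Boolean subalgebra $B \subseteq CL(X)$ and one must read off a function on $S(B)$ from ultrafilter membership; if anything, your version makes that step cleaner, since an ultrafilter on $B$ contains exactly one piece of each partition $\mc{P}_k$.
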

\begin{proof}
Let $\ce_{m,1}=\{\frac{j_1+\sqrt{-1}j_2}{m}\colon j_1,j_2\in\zet\wedge \norm{j_1+\sqrt{-1}j_2}\leq m\}$.

For every $y\in\ce_{m,1}$ consider $X_{y,f}=f^{-1}(B_{1/m}(y))$. We have that each $X_{y,f}$ is a $\sigma$-compact open subset of $X$, so is a countable union of clopen sets $X_{y,f,1},\ldots,X_{y,f,n},\ldots\in CL(X)$. Note that $\bigcup_{y\in \ce_{m,1}}\bigcup_{n\in\en} X_{y,f,n}=X$. Let $\tilde X_{m,f}=\{X_{y,f,n}\}_{(y,n)\in\ce_{m,1}\times\en}\subseteq CL(X)$.

We claim that $\tilde X_f=\bigcup_m\tilde X_{m,f}$ describes $f$ completely. Fix $x\in X$. For every $m\in\en$ we can find a (not necessarily unique) pair $(y,n)\in\ce_{m,1}$ such that $x\in X_{y,f,n}$. Note that, for any $m,n_1,n_2\in\en$ and $y\neq z$, we have that $X_{y,f,n_1}\cap X_{z,f,n_2}\neq\emptyset$ implies $\abs{y-z}\leq\sqrt{2}/m$. In particular, for every $m\in\en$ and $x\in X$ we have \[2\leq \abs{\{y\in\ce_{m,1}\colon \exists n (x\in X_{y,f,n})\}}\leq 4.\] Let $A_{x,m}=\{y\in\ce_{m,1}\colon \exists n (x\in X_{y,f,n})\}$ and choose $a_{x,m} \in A_{x, m}$ to have minimal absolute value. Then $f(x)=\lim_m a_{x,m}$ so the collection $\tilde X_f$ completely describes $f$ in the desired sense.
\end{proof}

The above proposition will be the key technical ingredient in proving the second implication in Theorem \ref{theorem1}.  We will proceed by first obtaining the desired result under the Continuum Hypothesis, and then showing how to eliminate the set-theoretic assumption.

\begin{Lemma}\label{lem:CHqfsat}
Assume the Continuum Hypothesis.  Let $B$ be a countably saturated Boolean algebra of cardinality $2^{\aleph_0} = \aleph_1$.  Then $C(S(B))$ is countably saturated.
\end{Lemma}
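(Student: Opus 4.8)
The plan is to realize $C(S(B))$, under CH, as a C*-algebraic ultrapower over a countably incomplete ultrafilter, so that its countable saturation becomes automatic from Fact \ref{fact:FDSat}. The bridge between the Boolean and C*-algebraic worlds will be Stone duality together with Lemma \ref{lem:Bankston}, and CH will enter only through a cardinality count that lets me invoke the uniqueness of saturated models.

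First I would fix a nonprincipal ultrafilter $\mathcal{U}$ on $\en$ and form the (discrete, first-order) ultrapower $B^{\mathcal{U}}$. Since $\mathcal{U}$ is countably incomplete, $B^{\mathcal{U}}$ is $\aleph_1$-saturated, and $B^{\mathcal{U}} \equiv B$. Under CH we have
\[ \aleph_1 = |B| \le |B^{\mathcal{U}}| \le |B|^{\aleph_0} = \aleph_1^{\aleph_0} = 2^{\aleph_0} = \aleph_1, \]
so $|B^{\mathcal{U}}| = |B| = \aleph_1$. Thus $B$ and $B^{\mathcal{U}}$ are elementarily equivalent Boolean algebras of cardinality $\aleph_1$, each $\aleph_1$-saturated and therefore saturated at its own cardinality; by the uniqueness of saturated models of a fixed cardinality they are isomorphic, $B \cong B^{\mathcal{U}}$.

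Next I would transport this isomorphism across the two dualities. By Stone duality $S(B) \cong S(B^{\mathcal{U}})$. The clopen identity of Lemma \ref{lem:Bankston} gives $CL(\sum_{\mathcal{U}} S(B)) \cong CL(S(B))^{\mathcal{U}} = B^{\mathcal{U}}$, and since the ultracopower of a Stone space is again a Stone space (the Boolean form of the results cited for Lemma \ref{lem:Bankston}), $\sum_{\mathcal{U}} S(B)$ is $0$-dimensional and hence recovered as the Stone space of its clopen algebra, so $\sum_{\mathcal{U}} S(B) \cong S(B^{\mathcal{U}}) \cong S(B)$. Applying $C(-)$ and the C*-algebraic half of Lemma \ref{lem:Bankston},
\[ C(S(B)) \cong C\!\left(\sum\nolimits_{\mathcal{U}} S(B)\right) \cong C(S(B))^{\mathcal{U}}. \]
The right-hand side is an ultrapower of a C*-algebra over the countably incomplete ultrafilter $\mathcal{U}$, hence countably saturated by Fact \ref{fact:FDSat}; as countable saturation passes through isomorphism, $C(S(B))$ is countably saturated.

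I expect the only real obstacle to be the careful justification of the ultracopower identifications — in particular that $\sum_{\mathcal{U}} S(B)$ is genuinely $0$-dimensional, so that the clopen-algebra isomorphism $B \cong B^{\mathcal{U}} \cong CL(\sum_{\mathcal{U}} S(B))$ upgrades to a homeomorphism $S(B) \cong \sum_{\mathcal{U}} S(B)$ rather than a mere elementary equivalence of the corresponding function algebras. The cardinality computation is the one and only place CH is used, and it is exactly why the hypothesis fixes $|B| = 2^{\aleph_0}$. Finally, it is worth remarking that this route realizes $C(S(B))$ as a literal ultrapower and so does not invoke the coding of functions by clopen sets from Proposition \ref{coding}; that proposition is instead what I would expect to be needed, via an absoluteness argument, to discharge the CH assumption in the subsequent general statement.
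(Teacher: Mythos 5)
Your proof is correct and follows essentially the same route as the paper's: under CH, use uniqueness of saturated models of cardinality $\aleph_1$ to identify $B$ with a Boolean ultrapower over a nonprincipal ultrafilter on $\en$, transfer this through Stone duality and Lemma \ref{lem:Bankston} to exhibit $C(S(B))$ as a C*-algebraic ultrapower, and conclude by Fact \ref{fact:FDSat}. The only (immaterial) difference is that the paper ultrapowers a countable elementary substructure $B' \preceq B$, obtaining $B \cong B'^{\mathcal{U}}$ and hence $C(S(B)) \cong C(S(B'))^{\mathcal{U}}$ as an ultrapower of a separable algebra, whereas you ultrapower $B$ itself and realize $C(S(B))$ as an ultrapower of itself; both variants rest on the same cardinality count and both need the fact, which you rightly flag explicitly, that the ultracopower of a Stone space is again a Stone space, so that the clopen-algebra isomorphism upgrades to a homeomorphism.
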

\begin{proof}
Let $B' \preceq B$ be countable, and let $\mc{U}$ be a non-principal ultrafilter on $\en$.  By the uniqueness of countably saturated models of size $\aleph_1$, and the continuum hypothesis, we have $B'^{\mc{U}} \cong B$.  By Lemma \ref{lem:Bankston} we therefore have $C(S(B)) \cong C(S(B'))^{\mc{U}}$, and hence $C(S(B))$ is countably saturated.
\end{proof}

\begin{teo}\label{thm:CHqfsat}
Assume the Continuum Hypothesis.  Let $X$ be a compact Hausdorff $0$-dimensional space. If $CL(X)$ is countably saturated as a Boolean algebra, then $C(X)$ is quantifier free saturated.
\end{teo}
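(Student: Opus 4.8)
The plan is to reduce the problem of realizing a quantifier-free type in $C(X)$ to realizing it in an algebra of the form $C(S(B))$ to which Lemma \ref{lem:CHqfsat} applies, and then to transport the solution back to $C(X)$ along an isometric $*$-homomorphism. So let $t=\set{\norm{P_n(\overline x,\overline a)}=r_n : n\in\en}$ be a countable quantifier-free type with parameters $\overline a$ from $C(X)_{\leq 1}$ that is approximately finitely satisfiable in $C(X)$; the goal is to realize it. First I would invoke Proposition \ref{coding} to attach to each parameter a countable family of clopen sets that determines it, and then, by the downward L\"owenheim--Skolem theorem for Boolean algebras, choose a countable elementary substructure $B_0\preceq CL(X)$ containing all of these clopen sets. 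Proposition \ref{coding} then guarantees $\overline a\subseteq\overline{\ce B_0}=C(S(B_0))$, so that $C(S(B_0))$ sits inside $C(X)$ (via the inclusion $B_0\subseteq CL(X)$) as a subalgebra containing all the parameters.

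The crucial point, which I expect to be the main obstacle, is to show that this inclusion $C(S(B_0))\subseteq C(X)$ \emph{reflects} approximate finite satisfiability, i.e.\ that a value $\inf_{\overline x}\max_{n\in\Delta}\abs{\norm{P_n(\overline x,\overline a)}-r_n}$ is the same (in particular, zero) whether computed in $C(X)$ or in $C(S(B_0))$. For this I would prove the general fact that $B_0\preceq CL(X)$ implies $C(S(B_0))\preceq C(X)$. By Frayne's lemma (see \cite{ChangKeisler}), since $B_0\preceq CL(X)$ there is an ultrafilter $\mc{V}$ and an elementary embedding $g\colon CL(X)\to B_0^{\mc{V}}$ extending the diagonal on $B_0$. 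Passing through the functor $B\mapsto C(S(B))$ and Lemma \ref{lem:Bankston}, which yields $C(S(B_0^{\mc{V}}))\cong C(S(B_0))^{\mc{V}}$ compatibly with the diagonal embeddings, $g$ induces an injective (hence isometric) $*$-homomorphism $\hat g\colon C(X)\to C(S(B_0))^{\mc{V}}$ that restricts to the C*-diagonal on $C(S(B_0))$. Since the diagonal is elementary (by {\L}o\'s) and an isometric $*$-homomorphism can only decrease the value of an $\inf$-formula, chasing these maps sandwiches the value of each such formula between its values in $C(S(B_0))$ and $C(X)$ and forces equality; in particular approximate finite satisfiability of $t$ transfers downward from $C(X)$ to $C(S(B_0))$.

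Next I would build a countably saturated overmodel. Form $B:=B_0^{\mc{U}}$ for a nonprincipal ultrafilter $\mc{U}$ on $\en$. Then $B$ is countably saturated, and under the Continuum Hypothesis $\abs{B}=\aleph_0^{\aleph_0}=\aleph_1$, so Lemma \ref{lem:CHqfsat} shows that $C(S(B))\cong C(S(B_0))^{\mc{U}}$ (Lemma \ref{lem:Bankston}) is countably saturated. The diagonal $C(S(B_0))\to C(S(B))$ is elementary, so approximate finite satisfiability of $t$ now transfers upward from $C(S(B_0))$ to $C(S(B))$. Combining this with the previous paragraph, $t$ is approximately finitely satisfiable in the countably saturated algebra $C(S(B))$, and is therefore realized there by some $\overline b\in C(S(B))_{\leq 1}$.

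Finally I would transport $\overline b$ back to $C(X)$. Because $CL(X)$ is countably (i.e.\ $\aleph_1$-) saturated and $\abs{B}=\aleph_1$ with $B\equiv CL(X)$, a standard universality argument embeds $B$ into $CL(X)$: enumerating $B\setminus B_0$ in order type $\omega_1$ and extending the inclusion $B_0\subseteq CL(X)$ step by step, each extension realizes a type over a countable parameter set, which $\aleph_1$-saturation provides. This produces an injective Boolean homomorphism $j\colon B\to CL(X)$ fixing $B_0$ pointwise. The induced isometric $*$-homomorphism $\hat j\colon C(S(B))\to C(X)$ then fixes $C(S(B_0))$, so $\hat j(\overline a)=\overline a$, and for every $n$ we get $\norm{P_n(\hat j(\overline b),\overline a)}=\norm{\hat j(P_n(\overline b,\overline a))}=\norm{P_n(\overline b,\overline a)}=r_n$. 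Hence $\hat j(\overline b)\in C(X)_{\leq 1}$ realizes $t$, as required. The only genuinely delicate step is the second paragraph, turning Boolean elementarity into the downward reflection $C(S(B_0))\preceq C(X)$; everything else is assembling Lemmas \ref{lem:Bankston} and \ref{lem:CHqfsat} with the isometry of injective $*$-homomorphisms.
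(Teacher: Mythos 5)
Your proposal is correct, but it routes the middle of the argument quite differently from the paper, even though both share the same skeleton: code functions by clopen sets (Proposition \ref{coding}), pass to $C(S(B))$ for a countably saturated Boolean algebra $B$, realize the type there, and transport the solution back along an isometric $*$-homomorphism coming from Stone duality, with CH used in both arguments. The paper builds $B$ \emph{inside} $CL(X)$: using countable saturation of $CL(X)$ and CH it finds a countably saturated subalgebra $B\subseteq CL(X)$ of cardinality $\aleph_1$ containing codes not only for the parameters but also for the partial solutions and all degree-$2$ polynomials in them; approximate finite satisfiability then passes to $C(S(B))$ directly, because each coded function $f'$ satisfies $\phi(f')=f$ for the inclusion-dual map $\phi\colon C(S(B))\to C(X)$, and the realized solution is pushed forward along $\phi$, with Lemma \ref{lem:CHqfsat} supplying saturation of $C(S(B))$. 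You instead take a countable elementary submodel $B_0\preceq CL(X)$ containing codes of the parameters only, transfer approximate finite satisfiability down to $C(S(B_0))$ by the Frayne/{\L}o\'s sandwich, realize the type in the ultrapower $C(S(B_0))^{\mc{U}}$ (which is countably saturated already by Fact \ref{fact:FDSat}, so Lemma \ref{lem:CHqfsat} is not really needed at this step), and transport the solution back via an embedding $B_0^{\mc{U}}\to CL(X)$ over $B_0$ obtained from universality of the $\aleph_1$-saturated $CL(X)$; this last step is where your proof consumes CH and the saturation of $CL(X)$, playing exactly the role that the construction of the saturated subalgebra plays in the paper. Your route needs less coding and no verification that the coded partial solutions satisfy the primed type, at the price of heavier model-theoretic machinery and the functorial compatibility checks (the Bankston identifications commuting with diagonal embeddings) that you rightly flag as the delicate points; all of these are true and standard. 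Two small caveats: the asserted general fact that $B_0\preceq CL(X)$ implies $C(S(B_0))\preceq C(X)$ is more than your sandwich proves --- it yields agreement of $\inf$-of-quantifier-free formulas only, i.e.\ that the inclusion is existentially closed in both directions --- but that weaker statement is all you use; and in the universality argument the partial maps should be extended as \emph{elementary} maps realizing complete types over countable sets, the identity on $B_0$ being partial elementary precisely because $B_0\preceq B_0^{\mc{U}}$ (by {\L}o\'s) and $B_0\preceq CL(X)$.
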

\begin{proof}
Let $\norm{P_n}=r_n$ be a collection of conditions, where each $P_n$ is a $2$-degree $*$-polynomial in $x_0,\ldots,x_n$, such that there is a collection $F=\{f_{n,i}\}_{n\leq i}\subseteq C(X)_{\leq 1}$, with the property that for all $i$ we have $\norm{P_n(f_{0,i},\ldots,f_{n,i})}\in (r_n)_{1/i}$ for all $n\leq i$.

For any $n$, we have that $P_n$ has finitely many coefficients. Consider $G$ the set of all coefficients of every $P_n$ and $L$ the set of all possible $2$-degree ${}^*$-polynomials in $F\cup G$. Note that for any $n\leq i$ we have that $P_{n}(f_{0,i},\ldots,f_{n,i})\in L$ and that $L$ is countable. For any element $f\in L$ consider a countable collection $\tilde X_f$ of clopen sets describing $f$, as in Proposition \ref{coding}.

Since $CL(X)$ is countably saturated and $2^{\aleph_0}=\aleph_1$ we can find a countably saturated Boolean algebra $B\subseteq CL(X)$ such that $\emptyset,X\in B$, for all $f\in L$ we have $\tilde X_f\subseteq B$, and $\abs{B}=\aleph_1$. 

Let $\iota \colon B\to CL(X)$ be the inclusion map.  Then $\iota$ is an injective Boolean algebra homomorphism and hence admits a dual continuous surjection $g_\iota\colon X\to S(B)$.

\begin{claim}
For every $f\in L$ we have that $\bigcup\tilde X_f=S(B)$.
\end{claim}

\begin{proof}
Recall that \[\bigcup \tilde X_f=X.\]
By compactness of $X$, there is a finite $C_{f}\subseteq \tilde X_{f}$ such that $\bigcup C_{f}=X$. In particular every ultrafilter on $B$ (i.e., a point of $S(B)$), corresponds via $g_\iota$ to an ultrafilter on $CL(X)$ (i.e., a point of $X$), and it has to contain an element of $C_{f}$. So $\bigcup \tilde X_{f}=S(B)$.
\end{proof}

From $g_\iota$ as above, we can define the injective map $\phi\colon C(S(B))\to C(X)$ defined as $\phi(f)(x)=f(g_\iota^{-1}(x))$.  Note that $\phi$ is norm preserving: Since $\phi$ is a unital $*$-homomorphism of C*-algebra we have that $\norm{\phi(f)}\leq\norm{f}$. For the converse, suppose that $x\in S(B)$ is such that $\abs{f(x)}=r$, and by surjectivity take $y\in X$ such that $g_\iota(y)=x$. Then \[\abs{\phi(f)(y)}=\abs{f(g_\iota(g_\iota^{-1}(x)))}=\abs{f(x)}.\]

For every $f\in L$ consider the function $f'$ defined by $\tilde X_f$ and construct the corresponding ${}^*$-polynomials $P_n'$.
\begin{claim}
\begin{enumerate}
\item\label{1} $f=\phi(f')$ for all $f\in L$.
\item \label{2}$\norm{P_n'(f_{0,i}',\ldots,f_{n,i}')}\in (r_n)_{1/i}$ for all $i$ and $n\leq i$.
\end{enumerate}
\end{claim}
\begin{proof}
Note that, since $f_{n,i}\in L$ and every coefficient of $P_n$ is in $L$, we have that $P_n(f_{0,i},\ldots,f_{n,i})\in L$. It follows that condition 1, combined with the fact that $\phi$ is norm preserving, implies condition 2.

Recall that $g=g_\iota$ is defined by Stone duality, and is a continuous surjective map $g\colon X\to Y$. In particular $g$ is a quotient map. Moreover by definition, since $X_{q,f,n}\in CL(Y)=B\subseteq CL(X)$, we have that if $x\in Y$ is such that $x\in X_{q,f,n}$ for some $(q,f,n)\in \qu\times L \times \en$, then for all $z$ such that $g(z)=x$ we have $z\in X_{q,f,n}$. Take $f$ and $x\in X$ such that $f(x)\neq\phi(f')(x)$. Consider $m$ such that $\abs{ f(x)-\phi(f')(x)}>2/m$. Pick $y\in\ce_{m,1}$ such that there is $k$ for which $x\in X_{y,f,k}$ and find $z\in Y$ such that $g(z)=x$. Then $z\in X_{y,f,k}$, that implies $f'(z)\in B_{1/m}(y)$ and so $\phi(f')(x)=f'(z)\in B_{1/m}(y)$ contradicting $\abs{f(x)-\phi(f')(x)}\geq 2/m$. 
\end{proof}
Consider now $\{\norm {P'_n(x_0,\ldots,x_n)}=r_n\}$. This type is consistent type in $C(S(B))$ by condition 2, and $C(S(B))$ is countably saturated by Lemma \ref{lem:CHqfsat}, so there is a total solution $\overline g$. Then $h_j=\phi(g_j)$ will be such that $\norm{P_n(\overline h)}=r_n$, since $\phi$ is norm preserving, proving quantifier-free saturation for $C(X)$.
\end{proof}

To remove the Continuum Hypothesis from Theorem \ref{thm:CHqfsat} we will show that the result is preserved by $\sigma$-closed forcing.  We first prove a more general absoluteness result about truth values of formulas.  For the necessary background in forcing, the reader can consult \cite[Chapter VII]{kunen:settheory}.  For more examples of absoluteness of model-theoretic notions, see \cite[Appendix]{Baldwin}.

Our result will be phrased in terms of truth values of formulas of \emph{infinitary} logic for metric structures.  Such a logic, in addition to the formula construction rules of the finitary logic we have been considering, also allows the construction of $\sup_n \phi_n$ and $\inf_n \phi_n$ as formulas when the $\phi_n$ are formulas with a total of finitely many free variables.  Two such infinitary logics have been considered in the literature.  The first, introduced by Ben Yaacov and Iovino in \cite{BenYaacovIovino}, allows the infinitary operations only when the functions defined by the formulas $\phi_n$ all have a common modulus of uniform continuity; this ensures that the resulting infinitary formula is again uniformly continuous.  The second, introduced by the first author in \cite{Eagle2014}, does not impose any continuity restriction on the formulas $\phi_n$ when forming countable infima or suprema; as a consequence, the infinitary formulas of this logic may define discontinuous functions.  The following result is valid in both of these logics; the only complication is that we must allow metric structures to be based on incomplete metric space, since a complete metric space may become incomplete after forcing.

\begin{Lemma}\label{absoluteness}
Let $M$ be a metric structure, $\phi(\overline{x})$ be a formula of infinitary logic for metric structures, and $\overline{a}$ be a tuple from $M$ of the appropriate length.  Let $\mathbb{P}$ be any notion of forcing.  Then the value $\phi^M(\overline{a})$ is the same whether computed in $V$ or in the forcing extension $V[G]$.
\end{Lemma}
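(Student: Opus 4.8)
The plan is to prove Lemma \ref{absoluteness} by induction on the complexity of the formula $\phi$, showing at each stage that the truth value $\phi^M(\overline{a})$ is absolute between $V$ and any generic extension $V[G]$.  The key conceptual point is that all the data involved---the values of the interpretations of function symbols, the values of $*$-polynomials, and the metric $d(x,y)$---are real numbers computed by a fixed sequence of arithmetic operations on the (finitely many) coordinates of $\overline{a}$, and such real numbers are not altered by forcing, since $\en$, $\qu$, and the ground-model reals are absolute.  Because $M$ itself (together with its interpreted operations and the tuple $\overline{a}$) lies in $V$, forcing adds no new elements to $M$ that could be plugged into $\phi$, so the only subtlety is the behaviour of the quantifiers $\sup$ and $\inf$ ranging over the (possibly now incomplete) metric space.

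First I would handle the atomic and quantifier-free cases.  For an atomic formula, the interpretation is the norm of a $*$-polynomial evaluated at $\overline{a}$; this norm is an honest real number in $V$, and no forcing can change the value of a real that already exists in the ground model.  The connectives are continuous functions $\er^k \to \er$ applied to the already-absolute values of subformulas, so these are absolute as well.  The induction hypothesis then gives absoluteness for each $\phi_n$, and for the infinitary connectives $\sup_n \phi_n$ and $\inf_n \phi_n$ one simply observes that the supremum (resp. infimum) over $n \in \en$ of a ground-model sequence of reals is computed identically in $V$ and $V[G]$, since the whole sequence lies in $V$.

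The main obstacle, and the heart of the argument, is the quantifier step: verifying that $\left(\sup_{x} \psi(\overline{a},x)\right)^M$ is absolute.  Here the supremum ranges over all elements $x$ of the structure $M$.  In $V$ we compute $\sup \set{ \psi^M(\overline{a}, b) : b \in M }$, and by the induction hypothesis each value $\psi^M(\overline{a},b)$ for $b \in M$ is the same in $V$ and in $V[G]$.  Since $M \in V$ and forcing does not add new elements to a ground-model set, the set of elements over which the quantifier ranges is unchanged, so the set of values $\set{ \psi^M(\overline{a},b) : b \in M }$ is literally the same set of reals in both models, and hence its supremum is the same.  This is exactly where we must work with metric structures allowed to be based on an \emph{incomplete} metric space: forcing may destroy completeness of the underlying space, but the statement of the lemma only concerns the structure $M$ as a set with its operations, and the quantifiers in our logic range over $M$ itself rather than over any completion, so no new witnesses appear.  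The infimum case is symmetric.  This completes the induction and establishes that $\phi^M(\overline{a})$ is computed identically in $V$ and $V[G]$.
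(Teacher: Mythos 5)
Your proof is correct and follows essentially the same route as the paper's: induction on formula complexity, with absoluteness of the atomic values, of continuous connectives applied to absolute values, of countable suprema/infima of ground-model sequences, and finally of quantifiers because the ground-model set $M$ acquires no new elements in $V[G]$. Your closing remark about quantifiers ranging over the possibly-incomplete $M$ itself rather than a completion matches the caveat the paper states just before the lemma.
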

\begin{proof}
The proof is by induction on the complexity of formulas; the key point is that we consider the structure $M$ in $V[G]$ as the same set as it is in $V$.  The base case of the induction is the atomic formulas, which are of the form $P(\overline{x})$ for some distinguished predicate $P$.  In this case since the structure $M$ is the same in $V$ and in $V[G]$, the value of $P^M(\overline{a})$ is independent of whether it is computed in $V$ or $V[G]$.

The next case is to handle the case where $\phi$ is $f(\psi_1, \ldots, \psi_n)$, where each $\psi_i$ is a formula and $f : [0, 1]^n \to [0, 1]$ is continuous.  Since the formula $\phi$ is in $V$, so is the function $f$.  By induction hypothesis each $\psi_i^M(\overline{a})$ can be computed either in $V$ or $V[G]$, and so the same is true of $\phi^M(\overline{a}) = f(\psi_1^M(\overline{a}), \ldots, \psi_n^M(\overline{a}))$.  A similar argument applies to the case when $\phi$ is $\sup_n\psi_n$ or $\inf_n\psi_n$.

Finally, we consider the case where $\phi(\overline{x}) = \inf_y \psi(\overline{x}, y)$ (the case with $\sup$ instead of $\inf$ is similar).  Here we have that for every $b \in M$, $\psi^M(\overline{a}, b)$ is independent of whether computed in $V$ or $V[G]$ by induction.  In both $V$ and $V[G]$ the infimum ranges over the same set $M$, and hence $\phi^M(\overline{a})$ is also the same whether computed in $V$ or $V[G]$.
\end{proof}

We now use this absoluteness result to prove absoluteness of countable saturation under $\sigma$-closed forcing.

\begin{prop}\label{saturationAbsolute}
Let $\mathbb{P}$ be a $\sigma$-closed notion of forcing.  Let $M$ be a metric structure, and let $\Phi$ be a set of (finitary) formulas.  Then $M$ is countably $\Phi$-saturated in $V$ if and only if $M$ is countably $\Phi$-saturated in the forcing extension $V[G]$.
\end{prop}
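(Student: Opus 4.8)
The plan is to reduce the statement to two absoluteness facts, one for the approximate finite satisfiability of a type and one for its realization, and then to observe that the ambient collection of countable $\Phi$-types is itself left unchanged by the forcing. Throughout I would regard $M$ as literally the same underlying set in $V$ and in $V[G]$, exactly as in the proof of Lemma \ref{absoluteness}. Since $\mathbb{P}$ is $\sigma$-closed it adds no new $\omega$-sequences of ground model elements, and in particular no new reals (see \cite[Chapter VII]{kunen:settheory}); consequently $M$ remains a complete metric structure with no new elements, and $\mathbb{R}$, its compact subsets, and the thickenings $(K)_\epsilon$ are all absolute between $V$ and $V[G]$. On top of this, Lemma \ref{absoluteness} gives that every formula value $\phi^M(\overline{a})$ is absolute, which is the engine driving the whole argument.

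First I would record that the countable $\Phi$-types are the same objects in both models. A single condition $\phi_n(\overline{x}) \in K_n$ consists of a syntactic formula from $\Phi \in V$, a finite tuple of parameters from $M_{\leq 1}$, and a compact set $K_n \subseteq \mathbb{R}$; the parameter tuple is finite and hence lies in $V$, and $K_n$ is coded by a real, which again lies in $V$ because no reals are added. A countable type is then an $\omega$-sequence of such ground model conditions, and $\sigma$-closure guarantees that this sequence itself belongs to $V$. Thus a countable $\Phi$-type occurs in $V[G]$ if and only if it occurs in $V$, and it suffices to prove, for a fixed type $p = \{\phi_n(\overline{x}) \in K_n : n \in \omega\} \in V$, that the predicates ``$p$ is approximately finitely satisfiable'' and ``$p$ is realized'' each have the same truth value in $V$ and in $V[G]$.

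Approximate finite satisfiability is the easy half. For fixed $\epsilon > 0$ and finite $\Delta \subseteq \omega$, a witness is a finite tuple from $M_{\leq 1}$, and any finite tuple of ground model elements lies in the ground model regardless of the forcing. By Lemma \ref{absoluteness} the values $\phi_n^M(\overline{b})$ are absolute, and the thickenings $(K_n)_\epsilon$ are absolute, so the condition $\phi_n^M(\overline{b}) \in (K_n)_\epsilon$ holds in $V$ if and only if it holds in $V[G]$. As $\epsilon$ and $\Delta$ range over the same (absolute) sets in both models, approximate finite satisfiability transfers in both directions; this half uses $\sigma$-closure only through the absoluteness of the reals.

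Realization is where the $\sigma$-closure of $\mathbb{P}$ is essential, and this is the main point. A realization of $p$ assigns to the countably many variables an $\omega$-sequence $(b_k)_{k \in \omega}$ of elements of $M_{\leq 1}$ with $\phi_n^M(\overline{b}) \in K_n$ for all $n$. If $p$ is realized in $V$, the same sequence works in $V[G]$ by absoluteness of formula values. For the converse, a realization found in $V[G]$ is an $\omega$-sequence of ground model elements of $M$, since there are no new elements of $M$; by $\sigma$-closure it already belongs to $V$, and Lemma \ref{absoluteness} shows it realizes $p$ in $V$ as well. The expected obstacle is precisely this step: without $\sigma$-closure a realizing $\omega$-sequence could be added by the forcing, so realizability need not be downward absolute, and the equivalence would fail. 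Combining the two absoluteness facts with the observation that the type $p$ and the quantification over all countable $\Phi$-types are unchanged by the forcing then shows that ``every approximately finitely satisfiable countable $\Phi$-type is realized'', that is, countable $\Phi$-saturation, holds in $V$ if and only if it holds in $V[G]$.
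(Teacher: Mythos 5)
Your proof is correct, but the mechanism you use for the crucial step --- absoluteness of realization --- is genuinely different from the paper's. Both arguments begin identically: since $\mathbb{P}$ is $\sigma$-closed, no new countable sets of ground-model objects appear, so the collection of countable $\Phi$-types with parameters from $M$ is the same in $V$ and in $V[G]$. The paper then encodes a given type $\mathbf{t}(\overline{x})$ as a \emph{single} formula $\phi(\overline{x})$ of the infinitary logic of \cite{Eagle2014} (a countable infimum/supremum over the conditions in $\mathbf{t}$), whose vanishing at a tuple expresses that the tuple realizes $\mathbf{t}$; absoluteness of realization is then one application of Lemma \ref{absoluteness} to this infinitary formula, together with the remark that the same finite tuples from $M$ exist in $V$ and $V[G]$. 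You never leave finitary logic: approximate finite satisfiability transfers because its witnesses are finite ground-model tuples, and realization transfers downward because a realizing $\omega$-sequence found in $V[G]$ consists of ground-model elements and hence, by $\sigma$-closure, already lies in $V$; Lemma \ref{absoluteness} is invoked only for the finitary formulas of $\Phi$. Your route is more elementary, and it also treats the countably-many-variable types of Definition \ref{cd1s} head-on, whereas the paper's infinitary formula has finitely many free variables, so its closing remark about finite tuples strictly covers finite-variable types and leaves implicit precisely the sequence-pullback that you make explicit. What the paper's encoding buys in exchange is a cleaner division of labour: for a fixed (finite-variable) type, realization becomes absolute under \emph{arbitrary} forcing, with the $\sigma$-closure hypothesis isolated in the single task of keeping the stock of types fixed, and the argument exhibits the intended use of the infinitary absoluteness machinery set up in Lemma \ref{absoluteness}.
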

\begin{proof}
First, observe that since $\mathbb{P}$ is $\sigma$-closed, forcing with $\mathbb{P}$ does not introduce any new countable set.  In particular, the set of types which must be realized for $M$ to be countably $\Phi$-saturated are the same in $V$ and in $V[G]$.

Let $\mathbf{t}(\overline{x})$ be a set of instances of formulas from $\Phi$ with parameters from a countable set $A \subseteq M$.  Add new constants to the language for each $a \in A$, so that we may view $\mathbf{t}$ as a type without parameters.  Define
\[\phi(\overline{x}) = \inf\{\psi(\overline{x}) : \psi \in \mathbf{t}\}.\]
Note that $\phi^M(\overline{a}) = 0$ if and only if $\overline{a}$ satisfies $\mathbf{t}$ in $M$.  This $\phi$ is a formula in the infinitary logic of \cite{Eagle2014}.  By Lemma \ref{absoluteness} for any $\overline{a}$ from $M$ we have that $\phi^M(\overline{a}) = 0$ in $V$ if and only if $\phi^M(\overline{a}) = 0$ in $V[G]$.  As the same finite tuples $\overline{a}$ from $M$ exist in $V$ and in $V[G]$, this completes the proof.
\end{proof}

Finally, we return to the proof of Theorem \ref{theorem1}.  All that remains is to show:

\begin{Lemma}\label{removingCH}
The Continuum Hypothesis can be removed from the hypothesis of Theorem \ref{thm:CHqfsat}. 
\end{Lemma}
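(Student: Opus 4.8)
The plan is to use forcing to reduce to the situation already covered by Theorem \ref{thm:CHqfsat}, and then to transfer the conclusion back to the ground model using the absoluteness results just established. Fix a compact $0$-dimensional Hausdorff space $X$ with $CL(X)$ countably saturated, and let $\mathbb{P}$ be a $\sigma$-closed notion of forcing that collapses $2^{\aleph_0}$ to $\aleph_1$, for instance $\mathrm{Coll}(\omega_1, 2^{\aleph_0})$ with countable conditions. Since $\mathbb{P}$ is $\sigma$-closed it adds no new reals, so in a generic extension $V[G]$ the continuum is the same set as in $V$ but now has cardinality $\aleph_1$; thus the Continuum Hypothesis holds in $V[G]$. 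The strategy is then to check that in $V[G]$ the hypotheses of Theorem \ref{thm:CHqfsat} still apply to $C(X)$, to deduce quantifier-free saturation of $C(X)$ in $V[G]$, and finally to invoke Proposition \ref{saturationAbsolute} to pull this back to $V$.

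First I would argue that $C(X)$, viewed as a metric structure, is unchanged by the forcing. Because $X$ is $0$-dimensional we have $C(X) = \overline{\ce\, CL(X)}$, the completion of the span of indicators of clopen sets. As $\mathbb{P}$ adds no new countable sequences, it adds no new Cauchy sequences from $\ce\, CL(X)$, so this completion is computed identically in $V$ and $V[G]$; hence $C(X)^V$ and $C(X)^{V[G]}$ are literally the same set, and complete in both models. In $V[G]$ this algebra is still a unital abelian C*-algebra equal to $\overline{\ce\, CL(X)} = C(S(CL(X)))$, so setting $Y = S(CL(X))$ as computed in $V[G]$ we get that $C(X) = C(Y)$ with $Y$ compact $0$-dimensional Hausdorff and $CL(Y) = CL(X)$ by Stone duality. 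Next I would note that $CL(X)$ remains countably saturated in $V[G]$: this is the discrete first-order analogue of Proposition \ref{saturationAbsolute}, and holds for the same reason, namely that $\sigma$-closed forcing introduces no new countable types and first-order satisfaction is absolute, so any type finitely satisfiable in $V[G]$ is already finitely satisfiable, hence realized, in $V$.

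With these facts in hand the argument concludes quickly. In $V[G]$ the Continuum Hypothesis holds, $C(X) = C(Y)$ with $Y$ compact $0$-dimensional Hausdorff, and $CL(Y) = CL(X)$ is countably saturated, so Theorem \ref{thm:CHqfsat} applies and yields that $C(X)$ is quantifier-free saturated in $V[G]$. Applying Proposition \ref{saturationAbsolute} with $\Phi$ the set of quantifier-free formulas and $M = C(X)$, quantifier-free saturation transfers down to $V$, completing the proof.

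I expect the main obstacle to be the verification that $C(X)$ really is the same metric structure in $V$ and $V[G]$ and can still be presented as $C(Y)$ for a $0$-dimensional compact $Y$ with $CL(Y) = CL(X)$. Everything hinges on the fact that a $\sigma$-closed forcing adds no new reals and hence no new Cauchy sequences, which is what makes the relevant closures, completions, and Boolean algebras of projections absolute; once this is granted, the transfer in both directions is routine given Proposition \ref{saturationAbsolute} and the discrete absoluteness of Boolean-algebra saturation.
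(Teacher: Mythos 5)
Your proof is correct and follows essentially the same route as the paper's: force with a $\sigma$-closed collapse of $2^{\aleph_0}$ to $\aleph_1$, note that the Continuum Hypothesis holds in $V[G]$ while $C(X)$ and the countable saturation of $CL(X)$ are unchanged, apply Theorem \ref{thm:CHqfsat} in $V[G]$, and pull quantifier-free saturation back to $V$ via Proposition \ref{saturationAbsolute}. If anything, your step of replacing $X$ by the Stone space $S(CL(X))$ as recomputed in $V[G]$ is more careful than the paper's bare assertion that ``$X$ remains a compact $0$-dimensional space in $V[G]$'', since $\sigma$-closed forcing can add new ultrafilters on an uncountable Boolean algebra such as $CL(X)$, so it is really the algebra $\overline{\ce\, CL(X)}$ (unchanged because no new Cauchy sequences are added) rather than the point-set $X$ that one should carry into the extension.
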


\begin{proof}
Let $X$ be a $0$-dimensional compact space such that $CL(X)$ is countably saturated, and suppose that the Continuum Hypothesis fails.  Let $\mathbb{P}$ be a $\sigma$-closed forcing which collapses $2^{\aleph_0}$ to $\aleph_1$ (see \cite[Chapter 7, \S 6]{kunen:settheory}).  Let $A = C(X)$.  Observe that since $\mathbb{P}$ is $\sigma$-closed we have that $X$ remains a compact $0$-dimensional space in $V[G]$, and we still have $A = C(X)$ in $V[G]$.  By Proposition \ref{saturationAbsolute} we have that $CL(X)$ remains countably saturated in $V[G]$.  Since $V[G]$ satisfies the Continuum Hypothesis we can apply Theorem \ref{thm:CHqfsat} to conclude that $A$ is countably quantifier-free saturated in $V[G]$, and hence also in $V$ by Proposition \ref{saturationAbsolute}.
\end{proof}

With the continuum hypothesis removed from Theorem \ref{thm:CHqfsat}, we have completed the proof of Theorem \ref{theorem1}.  It would be desirable to improve this result to say that if $CL(X)$ is countably saturated then $C(X)$ is countably saturated.  We note that if the map $\phi$ in Theorem \ref{thm:CHqfsat} could be taken to be an elementary map then the same proof would give the improved conclusion.
\subsection*{Proof of Theorem \ref{theorem2}}
We now turn to the proof of Theorem \ref{theorem2}. We start from the easy direction:

\begin{prop}\label{totaldisc}
If $X$ is a $0$-dimensional compact space with finitely many isolated points such that $C(X)$ is countably degree-$1$ saturated, then the Boolean algebra $CL(X)$ is countably saturated.
\end{prop}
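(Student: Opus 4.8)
The plan is to verify the interpolation criterion of Theorem \ref{teo:satbooleanalgebra}, using the countable degree-$1$ saturation of $C(X)$ to manufacture the required clopen separators. First I would dispose of the isolated points. Since $X$ has only finitely many isolated points, the set $I$ of them is clopen, so $X_0 := X \setminus I$ is a compact $0$-dimensional space with no isolated points, and hence $CL(X_0)$ is atomless. Moreover $C(X_0) \cong \chi_{X_0} C(X) \chi_{X_0}$ is a corner of $C(X)$, and corners of countably degree-$1$ saturated algebras are again countably degree-$1$ saturated (as recorded in Section \ref{section:Calkin2}). Because $CL(X) \cong \mathcal{P}(I) \times CL(X_0)$ with $\mathcal{P}(I)$ finite, countable saturation of $CL(X)$ follows from that of $CL(X_0)$ by the routine fact that adjoining a finite factor preserves countable saturation. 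Hence it suffices to treat the atomless case, and by Theorem \ref{teo:satbooleanalgebra} I only need: every countable directed pair $Y < Z$ in $CL(X_0)$ admits $c$ with $Y < c < Z$.

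So fix such $Y, Z$. Using that a countable directed family has a cofinal monotone sequence, I would replace $Y$ by an increasing sequence $(y_n)$ and $Z$ by a decreasing sequence $(z_n)$ with $y_n \subsetneq z_m$ for all $n, m$, and identify clopen sets with their characteristic projections in $C(X_0)$. The heart of the argument is the countable degree-$1$ type in one variable given, for $n \in \en$, by the conditions
\[ \norm{(1-x)\chi_{y_n}} = 0, \quad \norm{x(1-\chi_{z_n})} = 0, \quad \norm{x(1-\chi_{y_n})} = 1, \quad \norm{(1-x)\chi_{z_n}} = 1, \]
together with $0 \le x \le 1$. Each polynomial is degree $1$, with coefficients the projections $\chi_{y_n}, \chi_{z_n}$. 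For a realizing positive contraction $g$, the first two conditions say $g \equiv 1$ on $y_n$ and $g \equiv 0$ off $z_n$, while the last two (using that the clopen sets $X_0 \setminus y_n$ and $z_n$ are compact) say that $g$ attains the value $1$ at some point outside $y_n$ and the value $0$ at some point of $z_n$. I would check approximate finite satisfiability by exact solutions: for indices $n \le N$ we have $\bigcup_{n \le N} y_n = y_N \in Y$ and $\bigcap_{n \le N} z_n = z_N \in Z$ with $y_N \subsetneq z_N$, so atomlessness lets me split $z_N \setminus y_N$ and choose a clopen $c_N$ with $y_N \subsetneq c_N \subsetneq z_N$; then $g = \chi_{c_N}$ satisfies all four conditions exactly for $n \le N$. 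By countable degree-$1$ saturation together with Lemma \ref{lem:SatisfiableByPositive}, the type is realized by a single positive contraction $g \in C(X_0)$.

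From $g$ I would extract the separator. The sets $\{g = 1\}$ and $\{g = 0\}$ are disjoint and closed, so in the compact $0$-dimensional space $X_0$ they are separated by a clopen set $c$ with $\{g = 1\} \subseteq c$ and $c \cap \{g = 0\} = \emptyset$. Then $y_n \subseteq \{g = 1\} \subseteq c$ and $c \subseteq \{g > 0\} \subseteq z_n$ give $y_n \subseteq c \subseteq z_n$; the two extra conditions supply a point of $c \setminus y_n$ (where $g = 1$) and a point of $z_n \setminus c$ (where $g = 0$), yielding the strict inequalities $y_n \subsetneq c \subsetneq z_n$. Thus $Y < c < Z$, completing the verification.

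The main obstacle is conceptual rather than computational: being a projection is a degree-$2$ condition, so a degree-$1$ type cannot directly demand that its realization be the characteristic function of a clopen set. The device that resolves this is to realize instead a merely continuous interpolant $g$ and then round it to a clopen set by separating its $1$-level and $0$-level sets, which is precisely where $0$-dimensionality is used. The second delicate point is arranging strictness while keeping the type finitely satisfiable; here it is essential that $Y$ is upward and $Z$ downward directed, so that finite subfamilies have their supremum inside $Y$ and infimum inside $Z$ and hence remain strictly ordered, leaving room (via atomlessness) for a strict finite interpolant.
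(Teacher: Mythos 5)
Your proof is correct and takes essentially the same approach as the paper's: reduce to the atomless case via the finite clopen set of isolated points (using that corners/finite factors preserve the relevant saturation), realize a countable degree-$1$ type over the characteristic functions by a positive contraction via Lemma \ref{lem:SatisfiableByPositive}, and then round that contraction to a clopen interpolant by separating the disjoint closed sets $\{g=1\}$ and $\{g=0\}$ using compactness and $0$-dimensionality. The only cosmetic difference is that you build strictness into the type through the two extra norm-one conditions (with atomlessness supplying finite satisfiability), whereas the paper obtains strictness by passing to strictly monotone cofinal sequences and handling the finite case separately.
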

\begin{proof}
Assume first that $X$ has no isolated points.  In this case we get that $CL(X)$ is atomless, so it is enough to see that $CL(X)$ satisfies the equivalent condition of Theorem \ref{teo:satbooleanalgebra}.

Let $Y < Z$ be directed such that $\abs{Y} + \abs{Z} < \aleph_1$.  Assume for the moment that both $Y$ and $Z$ are infinite.  Passing to a cofinal increasing sequence in $Z$ and a cofinal decreasing sequence in $Y$, we can suppose that $Z=\{U_n\}_{n\in\en}$ and $Y=\{V_n\}_{n\in\en}$, where
\[U_1\subsetneq\ldots\subsetneq U_n\subsetneq U_{n+1}\subsetneq\ldots\subsetneq V_{n+1}\subsetneq V_n\subsetneq\ldots\subsetneq V_1.\]
If $\bigcup_{n\in\en} U_n=\bigcap_{n\in\en} V_n$ then $\bigcup_{n\in\en} U_n$ is a clopen set, so by the remark following the proof of Lemma \ref{abcondition1}, we have a contradiction to the countable degree-$1$ saturation of $C(X)$. 

For each $n \in \en$, let $p_n=\chi_{U_n}$ and $q_n=\chi_{V_n}$, where $\chi_A$ denotes the characteristic function of the set $A$. Then 
\[p_1<\ldots < p_n<p_{n+1}<\ldots<q_{n+1}<q_n<\ldots<q_1\] 
and by countable degree-$1$ saturation there is a positive $r$ such that $p_n<r<q_n$ for every $n$. In particular $A=\{x\in X\colon r(x)=0\}$ and $C=\{x\in X\colon r(x)=1\}$ are two disjoint closed sets such that $\overline{\bigcup_{n\in\en} U_n}\subseteq C$ and $\overline{X\setminus\bigcap_{n\in\en} V_n}\subseteq A$. We want to find a clopen set $D$ such that $A\subseteq D\subseteq X\setminus C$. For each $x\in A$ pick $W_x$ a clopen neighborhood contained in $X\setminus C$. Then $A\subseteq\bigcup_{x\in A}W_x$. By compactness we can cover $A$ with finitely many of these sets, say $A\subseteq \bigcup_{i\leq n}W_{x_i}\subseteq X\setminus C$, so $D = \bigcup_{i\leq n}W_{x_i}$ is the desired clopen set.

Essentially the same argument works when either $Y$ or $Z$ is finite.  We need only change some of the inequalities from $<$ with $\leq$, noting that a finite directed set has always a maximum and a minimum.

If $X$ has a finite number of isolated points, write $X = Y \cup Z$, where $Y$ has no isolated points and $Z$ is finite.  Then $C(X) = C(Y) \oplus C(Z)$ and $CL(X) = CL(Y) \oplus CL(Z)$.  The above proof shows that $CL(Y)$ is countably saturated, and $CL(Z)$ is saturated because it is finite, so $CL(X)$ is again saturated.
\end{proof}

To finish the proof of Theorem \ref{theorem2} it is enough to show that when $X$ has no isolated points the theory of $X$ admits elimination of quantifiers.  By Corollary \ref{cor:elementaryEquivalence} we have that $C(X) \equiv C(\beta\en\setminus\en)$ for such $X$, so it suffices to show that the theory of $C(\beta\en\setminus\en)$ eliminates quantifiers.  We thank Ilijas Farah and Bradd Hart for giving permission to include their unpublished proof of this result.

\begin{remark}
We point out that is not surprising that the theory of abelian real rank zero C*-algebra without minimal projections has quantifier elimination. In fact it is known that in this case the theory of the associated Boolean algebras has quantifier elimination. Conversely, if the Boolean algebra $B$ is not atomless, the theory of $B$ does not have quantifier elimination (in the language of Boolean algebras), and consequently when $X$ is compact $0$-dimensional and with densely many isolated points, the theory of $C(X)$ does not have elimination of quantifiers.  To see this, note that quantifier elimination implies model completeness, that is, if $A \models T$, $B \models T$ and $B \subseteq A$, then $B \preceq A$. In particular let $A=C(X)$ with $X$ as above, and let $T$ be the theory of real rank zero unital C*-algebra such that any projection has a minimal projection below it. Identify two different isolated points with each other with a function $f$ and consider the quotient space $Y$. Then $C(Y)\models T$, and since $f\colon X\to Y$ is surjective we have an embedding of $C(Y)$ into $C(X)$. Since there is a minimal projection in $C(Y)$ that is not minimal in $C(X)$, this embedding is not elementary.
\end{remark}

\begin{defin}
Let $a_1,\ldots,a_n\in C(X)$ (more generally, one can consider commuting operators on some Hilbert space $H$).  We say that $\overline a=(a_1,\ldots,a_n)$ is \defined{non-singular} if the polynomial $\sum_{i=1}^n a_ix_i=I$ has a solution $x_1,\ldots, x_n$ in $C(X)$.

We define the \defined{joint spectrum} of $a_1, \ldots, a_n$ to be 
\[j\sigma(\overline a)=\{\overline\lambda\in\ce^n\colon (\lambda_1-a_1,\ldots,\lambda_n-a_n)\text{ is singular}\}\]
\end{defin}

\begin{prop}
Fix $a_1,\ldots,a_n\in C(X)$. Then $\overline \lambda\in j\sigma(\overline a)$ if and only if $\sum_{i\leq n}\abs{\lambda_i-a_i}$ is not invertible.
\end{prop}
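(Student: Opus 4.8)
The plan is to translate both conditions into the single pointwise statement that the functions $\lambda_i - a_i$ have no common zero on $X$, and then match them up. First I would record the standard fact that, since $X$ is compact, an element $f \in C(X)$ is invertible if and only if it never vanishes (nowhere-vanishing continuous functions on a compact space are bounded away from $0$, so $1/f$ is continuous). Applying this to the positive element $\sum_{i \leq n} \abs{\lambda_i - a_i}$, which vanishes at a point $x$ precisely when $a_i(x) = \lambda_i$ for every $i$, I get that this sum is \emph{not} invertible if and only if there is some $x \in X$ with $a_i(x) = \lambda_i$ for all $i$, i.e.\ a common zero of the $\lambda_i - a_i$. Thus it suffices to show that $(\lambda_1 - a_1, \ldots, \lambda_n - a_n)$ is singular exactly when such a common zero exists.

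For the direction ``non-singular $\Rightarrow$ no common zero'' I would argue by evaluation. If the tuple is non-singular there are $x_1, \ldots, x_n \in C(X)$ with $\sum_i (\lambda_i - a_i) x_i = I$; evaluating this identity at any fixed $x \in X$ gives $\sum_i (\lambda_i - a_i(x)) x_i(x) = 1 \neq 0$, so at least one factor $\lambda_i - a_i(x)$ is nonzero, and no common zero can occur. For the converse I would exhibit an explicit B\'ezout-type solution: writing $b_i = \lambda_i - a_i$, the absence of a common zero makes the positive function $h = \sum_i \abs{b_i}^2 = \sum_i b_i^* b_i$ strictly positive on $X$, hence invertible in $C(X)$. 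Setting $x_i = b_i^*\, h^{-1}$ and using commutativity of $C(X)$ (so that $b_i b_i^* = b_i^* b_i$) yields $\sum_i b_i x_i = \left(\sum_i b_i^* b_i\right) h^{-1} = h\, h^{-1} = I$, witnessing non-singularity.

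The only point that needs a moment of care is that the statement is phrased with $\sum_i \abs{b_i}$, whereas the clean explicit inverse is built from $\sum_i \abs{b_i}^2$; this is harmless because at each $x \in X$ both sums vanish exactly when all $b_i(x) = 0$, so the two positive elements have the same zero set and hence one is invertible if and only if the other is. Combining the two implications with this observation gives that the tuple $(\lambda_1 - a_1, \ldots, \lambda_n - a_n)$ is singular if and only if $\sum_{i \leq n} \abs{\lambda_i - a_i}$ fails to be invertible, which is the assertion that $\overline{\lambda} \in j\sigma(\overline{a})$ iff $\sum_{i \leq n} \abs{\lambda_i - a_i}$ is not invertible. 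I do not expect any genuine obstacle here; the argument is essentially the commutative Gelfand--Nullstellensatz packaged through the explicit solution above.
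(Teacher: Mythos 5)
Your proof is correct and follows essentially the same route as the paper: both reduce the joint-spectrum condition and the non-invertibility of $\sum_{i\leq n}\abs{\lambda_i-a_i}$ to the existence of a common zero of the functions $\lambda_i-a_i$, using positivity to see that the sum vanishes exactly at the common zeros. The only difference is one of completeness: the paper simply asserts that $\overline\lambda\in j\sigma(\overline a)$ holds iff some $x\in X$ satisfies $a_i(x)=\lambda_i$ for all $i$, whereas you actually prove this equivalence via the evaluation argument and the explicit B\'ezout inverse $x_i=b_i^*\bigl(\sum_j b_j^*b_j\bigr)^{-1}$.
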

\begin{proof}
We have that $\overline\lambda\in j\sigma(\overline a)$ if and only if there is $x\in X$ such that $a_i(x)=\lambda_i$ for all $i\leq n$.
In particular, $\overline\lambda\in j\sigma(\overline a)$ if and only if $0\in\sigma(\sum \abs{\lambda_i-a_i})$ if and only if there is $x$ such that $\sum_{i\leq n}\abs{\lambda_i-a_i}(x)=0$. Since each $\abs{\lambda_i-a_i}$ is positive we have that this is possible if and only if there is $x$ such that for all $i\leq n$, $\abs{\lambda_i-a_i}(x)=0$.
\end{proof}
\begin{prop}The joint spectrum of an abelian C*-algebra $A$ is quantifier free-definable.
\end{prop}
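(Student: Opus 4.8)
The plan is to convert the preceding proposition into a quantifier-free formula by expressing the distance from a point $\overline\lambda \in \ce^n$ to the joint spectrum in terms of the norm of a single positive $^*$-polynomial. The object I want to show is quantifier-free definable is the predicate $\overline a \mapsto d(\overline\lambda, j\sigma(\overline a))$, uniformly as $\overline\lambda$ varies. First I would replace the positive element $\sum_{i\leq n}\abs{\lambda_i-a_i}$ from the preceding proposition by the degree-$2$ $^*$-polynomial
\[
q_{\overline\lambda}(\overline a)=\sum_{i\leq n}(\lambda_i-a_i)^*(\lambda_i-a_i).
\]
Since the $a_i$ commute, $q_{\overline\lambda}(\overline a)$ is positive, and its zero set in $X$ is exactly $\{x : a_i(x)=\lambda_i \text{ for all } i\leq n\}$, which is the common zero set of the $\abs{\lambda_i-a_i}$. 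Hence $q_{\overline\lambda}(\overline a)$ is non-invertible precisely when $\overline\lambda\in j\sigma(\overline a)$, by the preceding proposition.

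Next I would use that, because $A$ is abelian, the joint spectrum is the joint range $\{(a_1(x),\ldots,a_n(x)) : x\in X\}$, so the Euclidean distance from $\overline\lambda$ to $j\sigma(\overline a)$ is
\[
d(\overline\lambda, j\sigma(\overline a)) = \inf_{x\in X}\left(\sum_{i\leq n}\abs{\lambda_i-a_i(x)}^2\right)^{1/2} = \big(\inf\sigma(q_{\overline\lambda}(\overline a))\big)^{1/2},
\]
the infimum being attained as $X$ is compact. The whole problem thus reduces to showing that $\inf\sigma(q)$ is quantifier-free definable for the positive element $q=q_{\overline\lambda}(\overline a)$.

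The key step is to express $\inf\sigma(q)$ through a norm rather than a spectral quantity. Restricting to the unit ball, where each $\abs{\lambda_i-a_i}\leq 2$, we have $0\leq q\leq M$ for the fixed constant $M=4n$, so $M\cdot 1 - q$ is a positive $^*$-polynomial and
\[
\norm{M\cdot 1 - q}=\sup_{t\in\sigma(q)}(M-t)=M-\inf\sigma(q),
\]
whence $\inf\sigma(q)=M-\norm{M\cdot 1-q}$. Combining the displays yields
\[
d(\overline\lambda, j\sigma(\overline a)) = \left(M-\norm{\,M\cdot 1-\sum_{i\leq n}(\lambda_i-a_i)^*(\lambda_i-a_i)\,}\right)^{1/2},
\]
a continuous function (namely $u\mapsto\sqrt{M-u}$ on $[0,M]$) applied to the norm of a $^*$-polynomial in $\overline a$ whose coefficients involve $\overline\lambda$. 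This is a quantifier-free formula, depending continuously on the parameters $\overline\lambda$, exhibiting $j\sigma$ as quantifier-free definable.

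The main obstacle, and the reason for routing through $q$ and the fixed constant $M$, is to keep the formula genuinely quantifier-free: the naive identity $\inf\sigma(q)=\norm{q}-\norm{\norm{q}\cdot 1 - q}$ smuggles the norm $\norm{q}$ in as a coefficient, which is not a fixed scalar and therefore does not produce a $^*$-polynomial. Using an a priori bound $M$ valid on the relevant bounded domain replaces that varying coefficient by a genuine constant and repairs the issue; the only further care needed is to note that $\overline\lambda$ may be confined to the compact polydisc of radius $\sup_i\norm{a_i}$, outside of which the distance is controlled trivially.
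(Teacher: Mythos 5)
Your proof is correct, and while it shares its first move with the paper --- both arguments funnel the joint spectrum through a single positive element built from $\overline\lambda-\overline a$ whose non-invertibility detects membership --- the second step is genuinely different. The paper detects the zero set by functional calculus: it argues that $\abs{f}=\sqrt{ff^*}$ and the positive part $f_+$ are quantifier-free definable and then uses the predicate $F(a,\lambda)=\abs{1-\norm{(1-\abs{a-\lambda\cdot 1})_+}}$, which vanishes exactly on the spectrum; this works for all $\lambda\in\ce$ without any boundedness assumption (the truncation $(\,\cdot\,)_+$ absorbs large $\lambda$), but it only produces a predicate whose zero set is $j\sigma(\overline a)$, and it needs the auxiliary fact that these functional-calculus operations are themselves quantifier-free definable (a polynomial-approximation argument hiding in the background). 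Your route instead works with the honest degree-$2$ ${}^*$-polynomial $q_{\overline\lambda}(\overline a)=\sum_{i\leq n}(\lambda_i-a_i)^*(\lambda_i-a_i)$ and the identity $\inf\sigma(q)=M-\norm{M\cdot 1-q}$ for a fixed a priori bound $M$, so the resulting expression is literally a continuous function of the norm of a ${}^*$-polynomial --- no functional calculus needed --- and, better, it computes the actual Euclidean distance $d(\overline\lambda,j\sigma(\overline a))$, which is the stronger, standard notion of definability of a closed set in continuous logic. The price is the domain restriction: the constant $M$ is only valid when $\overline a$ and $\overline\lambda$ range over bounded sets, which is harmless in the usual bounded-sorts setup but should be said once precisely (e.g.\ take $M=n(1+R)^2$ when $\abs{\lambda_i}\leq R$ and $\norm{a_i}\leq 1$) rather than waved at; also note that positivity of $q_{\overline\lambda}(\overline a)$ is automatic in any C*-algebra and does not require commutativity, though commutativity is of course essential for identifying $j\sigma(\overline a)$ with the joint range and hence $\inf\sigma(q)$ with the squared distance.
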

\begin{proof}
First of all recall that, when $\overline a=a$, then $j\sigma(\overline a)=\sigma(a)$, hence the two definitions coincide for elements. We want to define a quantifier-free definable function $F\colon A\times\ce\to[0,1]$ such that $F(a,\lambda)=0$ if and only if $\lambda\in\sigma(a)$. Since we showed that $\overline\lambda\in\sigma(\overline a)$ if and only if $0\in\sigma(\sum_{i\leq n}\abs{\lambda_i-a_i})$, so, in light of this, we can define a function \[F_n\colon A^n\times\ce^n\to [0,1]\] as $F_n(\overline a,\overline\lambda)=F(\sum \abs{\lambda_i-a_i},0)$, hence we have that $F_n(\overline a,\overline \lambda)=0$ if and only if $\overline\lambda\in j\sigma(\overline a)$, that implies that the joint spectrum of $\overline a\in A^n$ is quantifier-free definable.

To define $\sigma(a)$, recall that, for $f\in A$, the absolute value of $f$ is quantifier-free definable as $\abs{f}=\sqrt{ff^*}$, and for a self-adjoint $f\in A$, its positive part is quantifier-free definable as the function $f_+=\max(0,f)$.  Then $F(a,\lambda)=\abs{1-\norm{(1-\abs{a-\lambda\cdot 1})_+}}$ is the function we were seeking.
\end{proof}
\begin{teo}\label{thm:quantifierElimination}
The theory of $C(\beta\en\setminus\en)$ has quantifier elimination. Consequently the theory of real rank zero abelian C*-algebras without minimal projections has quantifier elimination.
\end{teo}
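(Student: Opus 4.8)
The plan is to verify quantifier elimination by means of the standard one-element back-and-forth test for continuous logic (see \cite{FarahHartSherman}): writing $T=\text{Th}(C(\beta\en\setminus\en))$ for the complete theory in question, it suffices to show that whenever $M,N\models T$ with $N$ sufficiently saturated, $A\subseteq M$ is a substructure, $g\colon A\to N$ is an embedding preserving quantifier-free formulas, and $a\in M$, then $g$ extends to an embedding of $C^*(A,a)$ into $N$ still preserving quantifier-free formulas. Since the ambient algebras are abelian, $C^*(A,a)$ is abelian, and by treating $\operatorname{Re}(a)$ and $\operatorname{Im}(a)$ separately we may assume $a$ is self-adjoint. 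Such an extension is exactly the same thing as a self-adjoint $b\in N$ whose quantifier-free type over $g(A)$ equals that of $a$ over $A$; because $N$ is saturated it is enough to show that this quantifier-free type is approximately finitely satisfiable in $N$, and approximate finite satisfiability involves only finitely many parameters $\overline c\subseteq A$ at a time. Thus we reduce to the case $A=C^*(\overline c)$ with $\overline c$ a finite self-adjoint tuple.

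Here the joint-spectrum computations established above supply the essential encoding. By the preceding propositions the quantifier-free type of $(\overline c,a)$ is completely determined by the compact set $K=j\sigma(\overline c,a)\subseteq\er^n\times\er$, since every quantifier-free formula — a continuous function of norms of ${}^*$-polynomials — is a continuous function of this joint spectrum, namely $\norm{p(\overline c,a)}=\sup_{(\lambda,t)\in K}\abs{p(\lambda,t)}$. Because $g$ preserves quantifier-free formulas and the joint spectrum is quantifier-free definable, we have $j\sigma(g(\overline c))=\pi(K)$, where $\pi$ is the projection onto the first $n$ coordinates. Consequently the extension problem becomes purely spectral: find self-adjoint $b\in N$ with $j\sigma(g(\overline c),b)=K$; equivalently, realize over each point $\lambda\in\pi(K)$ the prescribed fibre $K_\lambda=\set{t\colon(\lambda,t)\in K}$.

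To solve this spectral matching problem I would work at the level of the clopen algebra. Writing $N=C(Z)$ with $Z$ a $0$-dimensional compact space without isolated points, the atomlessness of $CL(Z)\equiv\mathcal P(\omega)/\mathrm{fin}$ provides the homogeneity needed to build $b$: given $\epsilon>0$ and finitely many polynomial conditions derived from $K$, partition (up to $\epsilon$) the spectrum of $g(\overline c)$ into clopen pieces on which $g(\overline c)$ is nearly constant, of some value $\approx\lambda$, and split each such piece into finitely many clopen sets on which $b$ is assigned constant values forming an $\epsilon$-net of $K_\lambda$. Atomlessness guarantees that each clopen piece can be subdivided as finely as required, so the resulting step function lies in $C(Z)$ and meets the finitely many conditions within $\epsilon$. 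Realizing the type along the saturation of $N$ then yields the desired $b$, and the formal back-and-forth then produces quantifier elimination for $T$.

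The main obstacle is precisely this last construction: one must ensure not only that each fibre $K_\lambda$ is \emph{covered} by the values of $b$, but that the joint spectrum of $(g(\overline c),b)$ equals $K$ and nothing larger, so that all polynomial norms match from both sides. Controlling the mismatch introduced across the boundaries of the (necessarily overlapping) spectral boxes, and verifying that the $\epsilon$-approximate solutions assemble into an exact realization under saturation, is the delicate point; everything else is routine bookkeeping. Finally, the stated consequence is immediate: by Theorem \ref{boolean0dim}, every real rank zero abelian C*-algebra without minimal projections has atomless clopen algebra, hence is elementarily equivalent to $C(\beta\en\setminus\en)$ and models the same complete theory $T$, which we have just shown eliminates quantifiers.
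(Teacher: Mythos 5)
Your overall architecture --- the one--point extension test against a countably saturated model, together with the observation that quantifier-free types of self-adjoint tuples in abelian C*-algebras are coded by joint spectra, so that the extension problem becomes a spectral matching problem --- is sound, and it is a genuinely different route from the paper's. However, the step you yourself call ``the main obstacle'' is a genuine gap, and the construction as you describe it would in fact fail. On a clopen piece where $g(\overline c)$ is within $\delta$ of $\lambda$ you assign $b$ values from an $\epsilon$-net of the \emph{single} fibre $K_\lambda$. Since $\mu \mapsto K_\mu$ is only upper semicontinuous, a nearby fibre can be strictly larger than $K_\lambda$: take $n=1$ and $K = \left([0,1]\times\{0\}\right) \cup \{(0,1)\}$; if the piece containing the zero set of $g(c)$ is labelled by some attained value $\lambda > 0$, then $K_\lambda = \{0\}$ and the point $(0,1) \in K$ stays at distance $1$ from the joint spectrum of $(g(\overline c),b)$ no matter how fine the partition, so the norms of polynomials such as $p(x,y)= y(x-1)$ are not approximately matched. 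The repair is exactly the missing content: on each piece $P_l \subseteq \set{z : \abs{g(\overline c)(z)-\lambda_l}<\delta}$ use a $\delta$-net of the union of fibres $S_l = \bigcup_{\abs{\mu-\lambda_l}\leq\delta} K_\mu$ rather than of $K_{\lambda_l}$; then both inclusions hold up to $3\delta$ in the Hausdorff metric, and upper semicontinuity never enters. Note also that your second worry (assembling approximate solutions into an exact realization) is not actually delicate: only Hausdorff-approximate matching is ever needed, since uniform continuity of the finitely many $*$-polynomials converts Hausdorff closeness into $\epsilon$-satisfaction of the finitely many norm conditions, and countable saturation of $N$ then realizes the type exactly. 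Finally, you should either justify that the saturated model $N$ can be written as $C(Z)$ with $Z$ compact, $0$-dimensional and without isolated points, or simply take $N$ to be an ultrapower of $C(\beta\en\setminus\en)$, which by Lemma \ref{lem:Bankston} is $C$ of an ultracoproduct, where these properties are routine to check.

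For comparison, the paper avoids all approximation issues by proving that $C(\beta\en\setminus\en)$ itself is homogeneous for quantifier-free types: given tuples $\overline a, \overline b$ with the same joint spectrum $K$, one lifts both to $\ell^\infty(\en)$ so that the lifts take values in a common countable dense subset $D \subseteq K$ with every fibre $F_d = \set{m : \overline f(m)=d}$ and $G_d$ infinite; a permutation of $\en$ matching these fibres induces an automorphism of $C(\beta\en\setminus\en)$ carrying $\overline a$ to $\overline b$. The countable-dense-set-with-infinite-fibres trick does exactly (not approximately) what your clopen partition is meant to do, which is why no semicontinuity or Hausdorff estimates arise there; the price is reliance on the concrete representation $\ell^\infty(\en)/c_0$ and on background saturation facts to justify that homogeneity of this one model suffices. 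Your route, once the fibre issue is repaired, is the standard model-theoretic test and is independent of that representation; your derivation of the general statement from Theorem \ref{boolean0dim} agrees with the paper's.
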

\begin{proof}
It is enough to prove that for any $n\in\en$ and $\overline a, \overline b\in C(\beta\en\setminus\en)^n$ that have the same quantifier-free type over $\emptyset$ there is an automorphism of $C(\beta\en\setminus\en)$ sending $a_i$ to $b_i$, for all $i\leq n$.

Since $\overline a$ and $\overline b$ have the same quantifier-free type, we have that $K=j\sigma(\overline a)=j\sigma(\overline b)$. Consider $D$ be a countable dense subset of $K$ and pick $f_1,\ldots,f_n,g_1,\ldots,g_n\in C(\beta\en)=\ell^\infty(\en)$ such that $\forall (d_1,\ldots,d_n)\in D$ we have that $F_d=\{m\in\en\colon \forall i\leq n (f_i(m)=d_i)\}$ and $G_d=\{m\in\en\colon\forall i\leq n(g_i(m)=d_i) \}$ are infinite, $\pi(f_i)=a_i$, $\pi(g_i)=b_i$ and, for $m\in\en$ we have that $(f_1(m),\ldots,f_n(m)),(g_1(m),\ldots,g_n(m))\in D$.

In particular we have that $\en=\bigcup_{d\in D} F_d=\bigcup_{d\in D}G_d$ and that for all $d\neq d'$ we have $F_d\cap F_{d'}=\emptyset=G_d\cap G_{d'}$, then there is a permutation $\sigma$ on $\en$ (that induces an automorphism of $C(\beta\en\setminus\en)$) such that $f_i\circ\sigma=g_i$ for all $i\leq n$.
\end{proof}

The proof of Theorem \ref{theorem2} is now complete by combining Theorem \ref{theorem1}, Proposition \ref{totaldisc}, and Theorem \ref{thm:quantifierElimination}.

\section{Conclusions and questions}
In light of our result related to the Breuer ideal of a II$_1$ factor, we can ask whether or not this quotient structure carries more saturation than countable degree-$1$ saturation, and if the degree of saturation may depend on the structure of the II$_1$ factor itself. The proof of the failure of quantifier-free saturation for the Calkin algebra, as stated in \cite{farah2011countable}, involved the notion of the abelian group $\operatorname{Ext}$, a notion which has not been developed for quotients with an ideal that is not $\sigma$-unital, so it cannot be easily modified to obtain a similar result for our case (and in general, for the case of a tracial Von Neumann algebra modulo the ideal of finite projections).

It is natural, then, to state the following list of questions:
\begin{Question}Let $M$ be a II$_1$-factor, and $H$ separable. Is $M\overline\bigotimes\mathcal B(H)/\mathcal K$ quantifier-free saturated? Is fully saturated? Does this degree of saturation depend on the structure of $M$?
\end{Question}

Heading in the direction taken in the second part of section \ref{section:Calkin}, it is natural to consider saturation for the algebras $\mathcal C_\alpha$, when $\alpha \geq 1$ is an uncountable cardinal. From a set-theoretical point of view, saturation of the Boolean algebra $\mathcal P(\aleph_\alpha)/Fin$ was proven using quantifier elimination of the theory of atomless Boolean algebras, an advantage that we do not have here. Recalling that  each ideal of $\mathcal B_\alpha$ is generated by operators whose density is less than $\aleph_\beta$, for some $\beta\leq\alpha$, we note that if $\alpha\geq 1$ no ideal of $\mathcal B_\alpha$ satisfies the hypothesis of Theorem \ref{thm:FactorCtbleSat}, hence it is conceivable to ask whether or not the degree of saturation reflects to different quotients of $\mathcal B_\alpha$. This question in justified by the fact that, whenever $\alpha\leq\beta$ are ordinals, it is known that the Boolean algebra $\mathcal P(\aleph_\beta)/\{a\in\mathcal P(\aleph_\beta)\mid |a|<\aleph_\alpha\}$ is countably saturated (as a a consequence of \cite{Mija79}).
\begin{Question}Let $\alpha\geq 1$ be an ordinal. Which quotients of $\mathcal B_\alpha$ are countably $1$-degree saturated?
\end{Question}
Concerning the theories of the generalized Calkin algebras, we have the following question, which is resolved for $\alpha, \beta < \omega^\omega\cdot 2$ by Theorem \ref{teo:generalizedcalkin}.
\begin{problem}
Characterize pairs of ordinals $(\alpha,\beta)$ such that $\mathcal C_\alpha\equiv\mathcal C_\beta$ (or $\mathcal B_\alpha\equiv \mathcal B_\beta)$.
\end{problem}
In light of Theorem \ref{teo:generalizedcalkin}, it seems reasonable to conjecture that $\mathcal C_\alpha \equiv \mathcal C_\beta$ if and only if $\alpha \equiv \beta$. Note that $\mathcal B_\alpha\equiv\mathcal B_\beta$ implies $\mathcal C_\alpha\equiv \mathcal C_\beta$, since finite projections (and therefore the compact operators) form a definable set and so the Calkin algebra of weight $\alpha$ is definable inside $\mathcal B_\alpha$.

Two of our results, namely Theorems \ref{teo:generalizedcalkin} and \ref{boolean0dim}, express elementary equivalence of certain classes of C*-algebras in terms of elementary equivalence of associated discrete structures.  The following question, which was suggested by Ilijas Farah, is of the same flavour, and is moreover motivated by Elliott's classification  of AF algebras using $K$-theory in \cite{Elliott}.  We note that, unlike the examples considered here, the class of AF algebras is described by an omitting types property rather than by a theory (see \cite[Theorem 2]{Carlson}).  In particular, the method of proof of Theorem \ref{boolean0dim} is unavailable here.

\begin{Question}
If $A$ and $B$ are AF algebras such that $K_0(A) \equiv K_0(B)$ as (discrete) ordered abelian groups, is it true that $A \equiv B$?
\end{Question}

Turning now to the abelian setting, we can see that the $0$-dimensional case is largely settled. It remains open whether the implications of Theorem \ref{theorem1} can be reversed.  It would also be desirable to extend the final part of Theorem \ref{theorem2} to the case where the space has isolated points.  In the case where $X$ is $0$-dimensional and without isolated points, we used quantifier elimination of $C(X)$ to show that saturation of $CL(X)$ implies the saturation of $C(X)$.  This method does not generalize, as recent work of a Fields Institute undergraduate research group has recently shown that if $X$ has isolated points then $C(X)$ does not have quantifier elimination \cite{Fields}.  The same group has also shown that a large class of other spaces also have algebras without quantifier elimination, including all finite-dimensional manifolds.

There are many open questions regarding $C(X)$ when $X$ is not $0$-dimensional, in particular when $X$ has finite positive dimension.  In this case the Boolean algebra of clopen sets cannot be used as an invariant for the theory of $C(X)$, but it is natural to ask whether or not there is another well-known discrete invariant that characterizes the theory of the abelian C*-algebra associated to $X$. Regarding saturation, the situation seems to be much more complicated.  Recently Farah and Shelah \cite{farah2014rigidity} showed the existence of an $X\subseteq\er^2$ whose corona is countably degree-$1$ saturated but not quantifier free saturated.  The following question is still open:
\begin{Question}\label{qfnotfull} Does there exist a C*-algebra $A$ such that $A$ is quantifier-free saturated but $A$ is not countably saturated?
\end{Question}
As a particular case, we would like to mention a question that has been open for some time:
\begin{Question}\label{betaern}
Is $C(\beta\er^n\setminus\er^n)$ countably saturated, for $n\geq 2$?
\end{Question}
It is known that this algebra is countably degree-$1$ saturated. A positive answer to this question would solve a long-standing set theoretical problem, namely, would prove that under the Continuum Hypothesis there are $2^{\omega_1}$ many autohomeomorphisms of $\beta\er^n\setminus\er^n$.
\bibliographystyle{amsplain}
\bibliography{Saturation}

\providecommand{\bysame}{\leavevmode\hbox to3em{\hrulefill}\thinspace}
\providecommand{\MR}{\relax\ifhmode\unskip\space\fi MR }
\providecommand{\MRhref}[2]{%
  \href{http://www.ams.org/mathscinet-getitem?mr=#1}{#2}
}
\providecommand{\href}[2]{#2}
\begin{thebibliography}{10}

\bibitem{ArvesonNotes}
W.~Arveson, \emph{Notes on extensions of {$C^{*}$}-algebras}, Duke Math. J.
  \textbf{44} (1977), no.~2, 329--355.

\bibitem{Baldwin}
J.~Baldwin, \emph{Amalgamation, {A}bsoluteness, and {C}ategoricity},  (2012),
  available at
  \url{http://homepages.math.uic.edu/~jbaldwin/pub/singsep2010rev.pdf}.

\bibitem{Bankston2}
P.~Bankston, \emph{Expressive power in first-order topology}, J. Symb. Logic
  \textbf{49} (1984), 478--487.

\bibitem{Bankston}
\bysame, \emph{Reduced coproducts of compact {H}ausdorff spaces}, J. Symb.
  Logic \textbf{52} (1987), 404--424.

\bibitem{BenYaacov2008a}
I.~Ben~Yaacov, A.~Berenstein, C.~W. Henson, and A.~Usvyatsov, \emph{Model
  theory for metric structures}, Model theory with applications to algebra and
  analysis. {V}ol. 2, London Math. Soc. Lecture Note Ser., vol. 350, Cambridge
  Univ. Press, Cambridge, 2008, pp.~315--427.

\bibitem{BenYaacovIovino}
I.~Ben~Yaacov and J.~Iovino, \emph{Model theoretic forcing in analysis}, Annals
  of Pure and Applied Logic \textbf{158} (2009), no.~3, 163--174.

\bibitem{Fields}
J.~Berger, D.~Caudillo~Amador, C.~Eagle, I.~Farah, B.~Hart, J.~Kawach, S.~Kim,
  and Y.~Zhang, Fields Institute Undergraduate Research Report (2014).

\bibitem{BlackadarOpAlg}
B.~Blackadar, \emph{Operator algebras}, Encyclopaedia of Mathematical Sciences,
  vol. 122, Springer-Verlag, Berlin, 2006.

\bibitem{Breu168}
M.~Breuer, \emph{Fredholm theories in von {N}eumann algebras. {I}},
  Mathematische Annalen \textbf{178} (1968), 243--254.

\bibitem{Breu269}
\bysame, \emph{Fredholm theories in von {N}eumann algebras. {II}},
  Mathematische Annalen \textbf{180} (1969), 313--325.

\bibitem{Carlson}
K.~Carlson, E.~Cheung, I.~Farah, A.~Gerhardt-Bourke, B.~Hart, L.~Mezuman,
  N.~Sequeira, and A.~Sherman, \emph{Omitting types and {AF} algebras}, Arch.
  Math. Logic \textbf{53} (2014), 157--169.

\bibitem{ChangKeisler}
C.C. Chang and H.J. Keisler, \emph{Model theory}, 3 ed., North Holland, 1990.

\bibitem{coskey2012automorphisms}
S.~Coskey and I.~Farah, \emph{Automorphisms of corona algebras, and group
  cohomology}, arXiv preprint arXiv:1204.4839, To appear in Transactions of the
  Amer. Math. Soc. (2012).

\bibitem{elemequiv}
J.~E. Doner, A.~Mostowski, and A.~Tarski, \emph{The elementary theory of
  well-ordering---a metamathematical study},  \textbf{96} (1978), 1--54.

\bibitem{Eagle2014}
C.~J. Eagle, \emph{Omitting types in infinitary $[0, 1]$-valued logic}, Annals
  of Pure and Applied Logic \textbf{165} (2014), 913--932.

\bibitem{Elliott}
G.~Elliott, \emph{On the classification of inductive limits of sequences of
  semi-simple finite dimensional algebras}, J. Algebra \textbf{38} (1976),
  29--44.

\bibitem{farah2011all}
I.~Farah, \emph{All automorphisms of the {C}alkin algebra are inner}, Annals of
  Mathematics \textbf{173} (2011), 619--661.

\bibitem{farah2011countable}
I.~Farah and B.~Hart, \emph{Countable saturation of corona algebras}, C. R.
  Math. Rep. Acad. Sci., Canada \textbf{35} (2013), no.~2, 35--56.

\bibitem{FarahHartSherman}
I.~Farah, B.~Hart, and D.~Sherman, \emph{Model theory of operator algebras
  {II}: Model theory}, Israel J. Math. (2014), to appear.

\bibitem{farah2012homeomorphisms}
I.~Farah and P.~McKenney, \emph{Homeomorphisms of {C}ech-{S}tone remainders:
  the zero-dimensional case}, arXiv preprint arXiv:1211.4765 (2012).

\bibitem{farah2014rigidity}
I.~Farah and S.~Shelah, \emph{Rigidity of continuous quotients},
  arXiv:1401.6689, preprint (2014).

\bibitem{saeed}
S.~Ghasemi, \emph{{$SAW^*$}-algebras are essentially non-factorizable},
  arXiv:1209.3459, preprint (2012).

\bibitem{pedersensubstonean}
K.~Grove and G.~Pedersen, \emph{Sub-{S}tonean spaces and corona sets}, J.
  Funct. Anal. \textbf{56} (1984), no.~1, 124--143.

\bibitem{Gurevic}
R.~Gurevic, \emph{On ultracoproducts of compact {H}ausdorff spaces}, J. Symb.
  Logic \textbf{53} (1988), 294--300.

\bibitem{Henson2003}
C.~W. Henson and J.~Iovino, \emph{Ultraproducts in analysis}, Analysis and
  Logic, London Mathematical Society Lecture Note Series, no. 262, Camb, 2003.

\bibitem{HewittRoss}
E.~Hewitt and K.~A. Ross, \emph{Abstract harmonic analysis. {V}ol. {I}},
  Fundamental Principles of Mathematical Sciences, vol. 115, Springer-Verlag,
  Berlin-New York, 1979, Structure of topological groups, integration theory,
  group representations.

\bibitem{KirchbergRordam}
E.~Kirchberg and M~Rordam, \emph{Central sequence {$C^*$}-algebras and
  tensorial absorbtion of the {J}iang-{S}u algebra}, Journal f\"ur die reine
  und angewandte Mathematik (Crelles Journal), to appear.

\bibitem{kunen:settheory}
K.~Kunen, \emph{Set theory: an introduction to independence proofs.},
  North-Holland Pub. Co. Amsterdam, 1980.

\bibitem{Masumoto}
S.~Masumoto, \emph{The countable chain condition for {$C^*$}-algebras.},
  {S}ubmitted (2014).

\bibitem{Mija79}
{\v{Z}}.~Mijajlovi{\'c}, \emph{Saturated {B}oolean algebras with ultrafilters},
  Publ. Inst. Math. (Beograd) (N.S.) \textbf{26(40)} (1979), 175--197.

\bibitem{Parovicenko}
I.I. Parovi{\v{c}}enko, \emph{On a universal bicompactum of weight {$\aleph$}},
  Dokl. Akad. Nauk SSSR \textbf{150} (1963), 36--39.

\bibitem{pedersencorona}
G.~K. Pedersen, \emph{The corona construction}, Operator {T}heory:
  {P}roceedings of the 1988 {GPOTS}-{W}abash {C}onference ({I}ndianapolis,
  {IN}, 1988), Pitman Res. Notes Math. Ser., vol. 225, 1990, pp.~49--92.

\bibitem{Phil90}
J.~Phillips, \emph{{$K$}-theory relative to a semifinite factor}, Indiana Univ.
  Math. J. \textbf{39} (1990), no.~2, 339--354.

\bibitem{Shelah}
S.~Shelah, \emph{Every two elementarily equivalent models have isomorphic
  ultrapowers}, Israel J. Math. \textbf{10} (1971), 224--233.

\bibitem{Voiculescu}
D.~V. Voiculescu, \emph{Countable degree-$1$ saturation of certain
  {$C^*$}-algebras}, ArXiV:1310.4862 (2013).

\end{thebibliography}

\end{document}